\documentclass[letterpaper,11pt]{article}

\usepackage[utf8]{inputenc}
\usepackage[english]{babel}

\usepackage{latexsym}
\usepackage{amssymb}
\usepackage{amsthm}
\usepackage{amsmath}
\usepackage{mathrsfs}
\usepackage{amstext}
\usepackage{graphicx}  
\usepackage{amsfonts}
\usepackage{wrapfig}
\usepackage{sidecap} 
\usepackage{latexsym}
\usepackage{pdfpages}
\usepackage{epic}
\usepackage{fullpage}
\usepackage{color}
\usepackage{curves}
\usepackage{subfigure}
\usepackage{setspace}
\usepackage{amsmath}
\numberwithin{equation}{section}
\usepackage{anysize}
\usepackage{rotating}
\usepackage{enumerate} 
\usepackage{hyperref}
\usepackage{bbm}
\usepackage{graphicx}
\usepackage{subfigure}
\usepackage{algorithm}
\usepackage[noend]{algpseudocode}

\usepackage{bm}

\newtheorem{theorem}{Theorem}[section]

\newtheorem{proposition}[theorem]{Proposition}

\newtheorem{remark}[theorem]{Remark}

\DeclareMathOperator*{\argmin}{arg\,min}
%
%


\def\Om{\Omega}
\def\om{\omega}

\def\ds{\displaystyle}

\def\Fin{\hfill$\Box$}


\renewcommand{\leq}{\leqslant}
\renewcommand{\geq}{\geqslant}
\numberwithin{equation}{section}



\newcommand{\pt}{\partial_t}

\newcommand{\pnu}{\partial_\nu}





\title{\bf Numerical null controllability of parabolic PDEs \\ using Lagrangian methods}

\author{
	Enrique Fern\'andez-Cara\thanks{EDAN and IMUS, Universidad de Sevilla, Apartamento 1160, 41080 Sevilla, Spain.}
	\and 
	Roberto Morales\thanks{IMUS, Universidad de Sevilla, Apartamento 1160, 41080 Sevilla, Spain. 
	}
	\and 
	Diego A. Souza\thanks{EDAN, Universidad de Sevilla, Apartamento 1160, 41080 Sevilla, Spain.}
	}
\date{}
	
\begin{document}
\maketitle

\begin{abstract}
	In this paper, we study several theoretical and numerical questions concerning the null controllability problems for linear parabolic equations and systems for several dimensions.
	The control is distributed and acts on a small subset of the domain.
	The main goal is to compute numerically a control that drives a numerical approximation of the state from prescribed initial data exactly to zero.	
	We introduce a methodology for solving numerical controllability problems that is new in some sense. The main idea is to apply 
	classical Lagrangian and Augmented Lagrangian techniques
	to suitable constrained extremal formulations that involve unbounded weights in time 
	that make global Carleman inequalities possible. 
	The theoretical results are validated by satisfactory numerical experiments for spatially $2$D and $3$D problems. 
\end{abstract}

\tableofcontents

\section{Introduction}\label{Sec-1}

   Let~$\Omega\subset \mathbb{R}^d$ ($d\geq 1$) be a bounded domain with boundary~$\partial \Omega$ of class~$C^2$ and let~$T>0$ be given.
   Let us set~$Q:=\Omega\times (0,T)$ and~$\Sigma:=\partial\Omega\times (0,T)$.
   Also, for any open set~$\omega\subset \Omega$, we will put~$Q_\omega :=\omega\times (0,T)$.

   Let~$a\in W^{1,\infty}(Q)$ and~$b\in L^\infty(Q)$ be given.
   We will suppose that~$a$ satisfies~$a(x,t)\geq a_0>0$ in~$Q$ and, for convenience, we consider  the differential operators~$L$ and~$L^*$, with
   \begin{align*}
	Ly=\pt y-\textrm{div}(a\nabla y)+by\quad \text{and}\quad L^*z=-\pt z-\text{div}(a\nabla z)+bz.
   \end{align*}

   In this paper, we will mainly deal with distributed control problems for linear state systems of the form
   \begin{align}
	\label{eq:main:01}
	\begin{cases}
		Ly=v\mathbbm{1}_\omega &\textrm{ in }Q,\\
		y=0 &\text{ on }\Sigma,\\
		y(\cdot\,,0)=y_0 &\textrm{ in }\Omega,
	\end{cases}
   \end{align}
where~$\omega \subset \Om$ is a (maybe small) non-empty open set, $\mathbbm{1}_\omega$ is the associated characteristic function, $v \in L^2(Q_\om)$ and~$y_0\in L^2(\Omega)$.
   Here, $v = v(x,t)$ is the control and~$y=y(x,t)$ is the state. 

   It is well known that, for every~$v\in L^2(Q_\omega)$, there exists a unique weak solution~$y$ to~\eqref{eq:main:01}, with 
   \begin{align*}
	y\in C^0([0,T];L^2(\Omega)) \cap L^2(0,T;H_0^1(\Omega)).
   \end{align*}  

   The null controllability problem for~\eqref{eq:main:01} can be formulated as follows:
   for each~$y_0\in L^2(\Omega)$, find a control~$v\in L^2(Q_\omega)$ such that the corresponding solution to~\eqref{eq:main:01} satisfies
   \begin{align}
	\label{cond:null:controllability}
	y(x,T)=0,\quad x\in \Omega.
   \end{align}

   In the sequel, we will respectively denote by~$(\cdot\,,\cdot)$ and~$\| \cdot \|$ the usual scalar product and norm in~$L^2(\Omega)$ and the symbol~$C$ will stand for a generic positive constant.

\subsection{Literature}

   Controllability issues for PDEs have attracted the attention of the scientific community since the 80's; we mention~\cite{Lasiecka2000ControlI, Lasiecka2000ControlII, Russel1978Controllability, Coron2007Control} for a general overview.
   In particular, for the null controllability problem for the heat equation, we mention~\cite{Fursikov1996Controllability} and~\cite{Lebeau1995Controle}, where different approaches have been shown;
   see also~\cite{EFC2006Global}. 

   The regularity properties of the trajectories play an important role in the field.
   Thus, in the case of a parabolic equation, the regularization effect leads to some nontrivial difficulties that were exhibited numerically for the first time in~\cite{Carthel1994On}, where the authors try to find controls of minimal~$L^2$ norm. 

   Let us introduce two functions~$\rho$ and~$\rho_0$ with
   \begin{align}
	\label{assumptions:0:rho}
	\begin{cases}
		\rho=\rho(x,t),\,\rho_0=\rho_0(x,t)\textrm{ are continuous and }\geq \rho_*>0\textrm{ in }Q,
		\\ \ds
		\min_{x\in \overline{\Omega}} \rho(x,t)\to +\infty \text{ and }\min_{x\in \overline{\Omega}} \rho_0(x,t) \to +\infty \text{ as }t\to T^-
	\end{cases}
   \end{align}
and let us set
\[
	\mathcal{U}:=\rho_0^{-1}L^2(Q_\omega)=\{v\in L^2(Q_\omega) :  \rho_0 v\in L^2(Q_\omega) \},
\]
\[
	\mathcal{Y}:=\rho^{-1} L^2(Q)=\{y\in L^2(Q_\omega) :  \rho y\in L^2(Q) \}
\]
and
   \begin{align*}
	\mathcal{C}(y_0,T):=\{(y,v)\in \mathcal{Y} \times \mathcal{U} : y\textrm{ solves }\eqref{eq:main:01} \}.
   \end{align*}

   We will consider the constrained extremal problem
   \begin{align}
	\label{Extremal:problem:01}
	\begin{cases}
		\textrm{Minimize }\displaystyle J(y,v)=\dfrac{1}{2}\iint_{Q} \rho^2 |y|^2\,dx\,dt + \dfrac{1}{2}\iint_{Q_\omega} \rho_0^2 |v|^2 \,dx\,dt,\\
		\textrm{Subject to }(y,v)\in \mathcal{C}(y_0,T),
	\end{cases}
   \end{align}
introduced in the 90's by Fursikov and Imanuvilov;
   see~\cite{Fursikov1996Controllability} and the references therein.

\begin{remark}\label{rmk:weigths_NC}{\rm
   It is worth mentioning that, in~\eqref{Extremal:problem:01}, the null controllability requirement is implicitly imposed in the requirement~$(y,v) \in \mathcal{C}(y_0,T)$.
   In fact, it is known that~$\mathcal{C}(y_0,T)\neq \emptyset$ and any couple~$(y,v)\in \mathcal{C}(y_0,T)$ provides a solution to the null controllability problem~\eqref{eq:main:01}--\eqref{cond:null:controllability}.
   }\Fin
\end{remark}

   In this paper, we focus on the analysis and resolution of~\eqref{Extremal:problem:01} by using {\it Lagrangian} and {\it Augmented Lagrangian} methods.

   From the numerical controllability viewpoint, it is natural to consider the computation of minimal~$L^2$-norm null controls.
   This corresponds to~\eqref{Extremal:problem:01} with~$\rho\equiv 0$ and~$\rho_0\equiv 1$ and has been considered by~Carthel~et.~al.\ in~\cite{Carthel1994On} and then by other authors.
   
   The solution can be achieved as follows. For every~$\varphi_0 \in L^2(\Om)$, we consider the associated backward system
   \begin{align}
	\label{adjoint:varphi}
	\begin{cases}
		L^*\varphi =0&\textrm{ in }Q,\\
		\varphi=0&\text{ on }\Sigma,\\
		\varphi(\cdot\,,T)=\varphi_0&\text{ in }\Omega .
	\end{cases}
   \end{align}  
   Then, the null control of minimal norm in~$L^2(Q_\om)$ is given by~$v = \hat{\varphi}\mathbbm{1}_\om$, where~$\hat{\varphi}$ is the solution to~\eqref{adjoint:varphi} that corresponds to~$\hat{\varphi}_0$ and~$\hat{\varphi}_0$, minimizes the functional
   \begin{align*}
	I(\varphi_0):=\dfrac{1}{2}\iint_{Q_\omega} |\varphi(x,t)|^2\,dx\,dt + \int_\Omega y_0(x) \varphi(x,0) \,dx,   
	\end{align*}
over the Hilbert space~$\mathcal{H}$, given by the completion of~$L^2(\Omega)$ with respect to the norm
   \begin{align*}
	\|\varphi_0\|_{\mathcal{H}}:=\|\varphi\|_{L^2(Q_\omega)}.
   \end{align*}

   Note that the mapping~$\varphi_0 \mapsto \|\varphi_0\|_{\mathcal{H}}$ is a semi-norm in~$\mathcal{D}(\Omega)$.
   In view of the unique continuation property satisfied by the solutions to the systems~\eqref{adjoint:varphi}, it is in fact a pre-Hilbertian norm.
   Hence, the completion of~$\mathcal{D}(\Omega)$ for this norm can be considered.

   Furthermore, one has the observability inequality  
   \begin{align}\label{1.6p}
	\|\varphi(\cdot\,,0)\|^2\leq C\iint_{Q_\omega} |\varphi|^2\,dx\,dt\quad \forall \varphi_0\in L^2(\Omega)
   \end{align}
and, consequently, the coerciveness of the functional~$I$ in~$\mathcal{H}$ is ensured.
   Note that~\eqref{1.6p} is a consequence of some appropriate global Carleman estimates, see for instance~\cite{Fursikov1996Controllability} and~\cite{EFC2006Global}. 

   As explained in~\cite{Munch2010Numerical}, one has~$H^{-s}(\Omega) \hookrightarrow \mathcal{H}$ for all~$s>0$ with continuous embedding. For a proof of this assertion in 1-D case, see \cite{Micu2011On}. Accordingly, the minimization of~$I$ is numerically ill-posed:
   it does not seem easy to find a family of finite-dimensional ordered spaces that approximate the functions~$\varphi_0$ in the sense of the~$\mathcal{H}$ norm as the dimension grows to infinity.
   For this reason, in~\cite{Carthel1994On} the authors considered regularized approximate controllability problems, replacing~$I$ by~$I_\epsilon$, where 
   \begin{align*}
	I_\epsilon(\varphi_T):=I(\varphi_T)+\epsilon \|\varphi_T\|_{L^2},\quad \epsilon>0.
   \end{align*}

   The minimizers~$\varphi_{T,\epsilon}\in L^2(\Omega)$ and therefore the corresponding controls~$v_\epsilon$ produce states~$y_\epsilon$ that satisfy~$\|y_\epsilon(\cdot\,,T)\|_{L^2}\leq \epsilon$.
   The main advantage of this approach is that it can handle 
various boundary conditions and can be adapted to different types of parabolic and hyperbolic equations.
   However, for small $\epsilon$, the controls obtained by this method oscillate near the final time, unless a careful approximation and/or penalization process is performed;
   see~\cite{boyer2,  boyer1, Carthel1994On, Glowinski1995Exact, Glowinski2008Exact, Munch2010Numerical} for more details.



   An alternative is the so called {\it flatness approach.}
   This is a direct method for which the solution relies on the computation of the sums of appropriate series expansions.
   The corresponding partial sums are easy to compute and provide accurate numerical approximations of both the control and the state.
   The main advantage of this method is that it provides explicit control laws for certain problems.
   However, its implementation for high-dimensional systems can be cumbersome and the requirement to the system to be flat (i.e.\ to have time independent coefficients) makes the scope of the method limited;
   see~\cite{flatness1, flatness2, flatness3} for more details.

%

   A second alternative is given by the space-time strategy introduced in~\cite{FC2013Strong}.
   It relies on the  Fursikov-Imanuvilov formulation of controllability problems, where a weighted integral involving  both variables, state and control, is introduced, see~\cite{Fursikov1996Controllability} and the references therein;
   see also~\cite{EFC2006Global}.
   
   In this method, the task is reduced to solve a second-order in time and fourth-order in space PDE system.
   The main advantage is that  the strong convergence of the approximations is obtained from C\'ea's Lemma and a good choice of the associated finite dimensional approximation spaces.
   However, the numerical solution via a direct method requires in practice~$C^0$ in time and~$C^1$ in space finite elements and consequently is not easy to handle in high dimension.
   
   The~$C^1$ regularity drawback can be circumvented by introducing mixed formulations but this is not to our knowledge completely well-justified from the theoretical viewpoint.
   Moreover, for high dimension problems, the method is far from simple;
   for instance, to solve a control problem for~$3$D heat equation, at least~$\mathbb{P}_1$-Lagrange~$4$D finite elements are needed.

\subsection{Plan of the paper}

	In this paper, we will apply other methods that also start from the space-time strategy. 
	The idea is to introduce some Lagrangian (saddle-point) reformulations of~\eqref{Extremal:problem:01}.
	The techniques are relatively well known in many other contexts and have been applied since several decades to various PDE problems, see \cite{Fortin1983augmented}.
	
	Let us mention some advantages of the use of {\it Lagrangian} and {\it Augmented Lagrangian} formulations in the context of controllability problems:
	
\begin{itemize}
	
	\item They are easy to adapt to parabolic problems with nonzeo right-hand sides with a suitable exponential decay as~$t\to T^-$. 
		
	\item Various kinds of boundary conditions can be considered.		
			
	\item The well-posedness of the numerical approximations can be rigorously justified. 
			
	\item The methods are useful for high dimension problems, where there are very few efficient techniques. 
	Thus, the implementation for~$3$D parabolic equations is similar to the~$2$D case, and does not bring extra difficulties. 
		
	\item The methods are compatible (and improvable) with adaptive mesh refinement.
	
\end{itemize}

   The  paper is organized as follows.
   
   Section~\ref{Section:study:extremal:problem:trunctation} is devoted to analyze the extemal problem~\eqref{Extremal:problem:01}, a family of ``truncated'' approximations and their properties.
   In~Section~\ref{Section:study:extremal:problem}, we introduce Lagrangian and Augmented Lagrangian (saddle-point) reformulations of he truncated problems.
   Then, several related algorithms of the Uzawa kind are presented and described in~Section~\ref{Section:iterative:algorithms}.
   Section~\ref{Section:Stokes} is devoted to the null controllability problem for the Stokes system and its Lagrangian reformulations.
   In~Section~\ref{Section:numerical:experiments}, we present with detail the results of several numerical experiments for $2$D and $3$D problems concerning the heat equation and the Stokes system.
   Finally, Section~\ref{Section:further:comments} contains some conclusions, additional comments and open questions.   
   
%
%



\section{Truncation of the extremal control problem}
\label{Section:study:extremal:problem:trunctation}

   In order to formulate with detail our first main problem, we consider a function~$\eta^0\in C^2(\overline{\Omega})$ such that  
   \begin{align}\label{weight:conditions}
\eta^0=0\text{ on }\partial\Omega,\quad |\nabla \eta^0|>0\text{ in }\Omega\setminus \overline{\omega}\quad \text{and}\quad \pnu \eta^0 <0\text{ on }\partial\Omega.
   \end{align}
   
   The existence of~$\eta^0$ is proved in~\cite{Fursikov1996Controllability} for~$\Omega$ of class~$C^2$ and arbitrary~$\omega$.
   As we will see in~Section~\ref{Section:numerical:experiments}, it is also possible to construct explicit functions~$\eta^0$ when~$\Omega$ is a bounded domain with polygonal or polyhedrical boundary. 

   Let us set 
   \begin{align}\label{Carleman:weights}
\alpha(x,t):=K_1 \dfrac{e^{K_2}-e^{\eta^0(x)}}{T-t},\quad 
\rho(x,t):=e^{ \alpha(x,t)}\quad \text{and}\quad \rho_0(x,t):=(T-t)^{3/2}\rho(x,t),
   \end{align}
where~$K_1,K_2>0$ are sufficiently large.
   It is immediate to check that these functions~$\rho$ and~$\rho_0$ satisfy~\eqref{assumptions:0:rho}. 

   Let us introduce the linear operator~$M:L^2(Q_\omega)\to L^2(Q)$ with~$Mv:=y$, where~$y$ is the weak solution to the linear problem 
   \begin{align*}
	\begin{cases}
		Ly= v\mathbbm{1}_\omega&\text{ in }Q,\\
		y=0&\text{ on }\Sigma,\\
		y(\cdot\,,0)=0&\text{ in }\Omega.
	\end{cases}
   \end{align*}
   Its adjoint~$M^*:L^2(Q)\to L^2(Q_\omega)$ is given by~$M^*q=z|_{Q_\omega}$, where~$z$ is the weak solution to
   \begin{align*}
	\begin{cases}
		L^*z=q&\text{ in }Q,\\
		z=0&\text{ on }\Sigma,\\
		z(\cdot\,,T)=0&\text{ in }\Omega.
	\end{cases}
   \end{align*}

   In the sequel, we denote by~$\overline{y}$ the solution to~\eqref{eq:main:01} with~$v \equiv 0$.
   Then the set~$\mathcal{C}(y_0,T)$ in~\eqref{Extremal:problem:01} can be written in the form
   \begin{align*}
	\mathcal{C}(y_0,T)=\{(y,v)\in \mathcal{Y}\times \mathcal{U} :  y=Mv+\overline{y} \}.
   \end{align*}

   Let us introduce the linear space
   \begin{align*}
	P_0:=\{q\in C^2(\overline{Q}) : q=0\text{ on }\Sigma \}.
   \end{align*}
   In this space, the bilinear form 
   \begin{align*}
	(p,q)_P:=\iint_Q \rho^{-2}L^*p\,L^*q\,dx\,dt + \iint_{Q_\omega} \rho_0^{-2} p\,q\,dx\,dt
   \end{align*}
	is in fact a scalar product.
   Indeed, if we have~$q\in P_0$, $L^*q=0$ in~$Q$ and~$q=0$ in~$Q_\omega$, from the unique continuation property for parabolic equations, we deduce that~$q\equiv 0$.
   
   Let~$P$ be the completion of~$P_0$ for this scalar product (a Hilbert space).
   We have the following result:
   
\begin{proposition}\label{proposition:existence:minimizer}
   Let~$\rho,\rho_0$ be the functions defined in~\eqref{Carleman:weights}.
   Then~\eqref{Extremal:problem:01} has a unique minimizer~$(y,v)\in \mathcal{Y}\times \mathcal{U}$.
   Moreover, we have the following characterization: 
   \begin{align*}
		y=-\rho^{-2}q,\quad v=\rho_0^{-2}M^*q.
		y=-\rho^{-2}L^*p,\quad v=\rho_0^{-2} p\big|_{Q_\omega} ,
    \end{align*}
    where~$p\in P$ is the unique solution to the variational (Lax-Milgram-like) problem
    \begin{align}
	        \label{FI-LM}
 		\begin{cases}
 			\displaystyle \iint_{Q} \rho^{-2}L^*p\,L^*q\,dx\,dt + \iint_{Q_\omega} \rho_0^{-2}p\,q\,dx\,dt=\int_\Omega y_0(x)\,q(x,0)\,dx,\\
 			\forall q\in P,\quad p\in P. 
 		\end{cases}
    \end{align} 
\end{proposition}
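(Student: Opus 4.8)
The plan is to read~\eqref{Extremal:problem:01} as a minimal-norm (projection) problem and to extract both the existence of a unique minimizer and its characterization from a single variational problem, namely~\eqref{FI-LM}. First I would note that $J(y,v)=\tfrac12\|\rho y\|_{L^2(Q)}^2+\tfrac12\|\rho_0 v\|_{L^2(Q_\omega)}^2$ is exactly one half of the squared norm of the Hilbert space $\mathcal{Y}\times\mathcal{U}$, so~\eqref{Extremal:problem:01} asks for the element of least norm in the affine set $\mathcal{C}(y_0,T)=\{(y,v):y=Mv+\overline{y}\}$. Since $\mathcal{C}(y_0,T)$ is nonempty (Remark~\ref{rmk:weigths_NC}) and convex, strict convexity of the squared norm already forces uniqueness; the substance of the proposition is therefore to \emph{produce} the minimizer and to identify it through the adjoint variable $p$.

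Second, I would write the state constraint in transposition form. Testing $Ly=v\mathbbm{1}_\omega$, $y=0$ on $\Sigma$, $y(\cdot,0)=y_0$, $y(\cdot,T)=0$ against an arbitrary $q\in P_0$ and integrating by parts in space and time (all boundary terms on $\Sigma$ and at $t=T$ vanishing) gives the weak identity
\[
\iint_Q y\,L^*q\,dx\,dt-\iint_{Q_\omega} v\,q\,dx\,dt=\int_\Omega y_0(x)\,q(x,0)\,dx,\qquad\forall q\in P_0 .
\]
Formally, the Lagrangian of~\eqref{Extremal:problem:01} with multiplier $p$ enforcing this identity yields the Euler--Lagrange relations $\rho^2 y=\mp L^*p$ and $\rho_0^2 v=\pm p$ on $Q_\omega$, i.e. $y$ and $v$ are the stated multiples of $\rho^{-2}L^*p$ and $\rho_0^{-2}p|_{Q_\omega}$. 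Substituting these expressions back into the weak identity turns it into exactly~\eqref{FI-LM}, the sign convention in the statement being the one for which the two terms reassemble into the form $(p,q)_P=\int_\Omega y_0\,q(\cdot,0)$.

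Third, I would solve~\eqref{FI-LM} rigorously and reverse the reasoning. The bilinear form appearing there is, by construction, the scalar product $(\cdot,\cdot)_P$ of the Hilbert space $P$, hence it is automatically continuous and coercive (with constant $1$). Thus the Lax--Milgram / Riesz theorem yields a unique $p\in P$ as soon as the linear form $q\mapsto\int_\Omega y_0\,q(\cdot,0)$ is continuous on $P$. Granting this, I would set $y:=\mp\rho^{-2}L^*p$ and $v:=\pm\rho_0^{-2}p|_{Q_\omega}$ and check: (i) $(y,v)\in\mathcal{Y}\times\mathcal{U}$, which is immediate since $\|\rho y\|_{L^2(Q)}^2=\iint_Q\rho^{-2}|L^*p|^2\le\|p\|_P^2$ and $\|\rho_0 v\|_{L^2(Q_\omega)}^2=\iint_{Q_\omega}\rho_0^{-2}|p|^2\le\|p\|_P^2$; (ii) $(y,v)\in\mathcal{C}(y_0,T)$, because~\eqref{FI-LM} is precisely the weak identity above for this pair; and (iii) optimality: for any other feasible $(\tilde y,\tilde v)$ the difference $(h,w):=(\tilde y-y,\tilde v-v)$ satisfies the homogeneous identity, so taking $q=p$ makes the cross term $\iint_Q\rho^2 y\,h+\iint_{Q_\omega}\rho_0^2 v\,w$ vanish; expanding the squared norm then gives $J(\tilde y,\tilde v)=J(y,v)+\tfrac12\|(h,w)\|_{\mathcal{Y}\times\mathcal{U}}^2\ge J(y,v)$, with equality iff $(h,w)=0$. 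This establishes existence, uniqueness and the characterization simultaneously.

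The main obstacle is the single analytic ingredient used above: the continuity of $q\mapsto\int_\Omega y_0\,q(\cdot,0)$ on $P$, equivalently the estimate $\|q(\cdot,0)\|_{L^2(\Omega)}\le C\|q\|_P$ for $q\in P_0$. This is where the global Carleman inequality underlying~\eqref{1.6p} enters, and it is exactly to make such a weighted estimate hold that the weights $\rho,\rho_0$ in~\eqref{Carleman:weights} are chosen with the prescribed blow-up as $t\to T^-$: one first bounds a weighted interior norm of $q$ by $\|q\|_P$ via Carleman and then transfers it to the initial slice through a parabolic energy estimate. A secondary technical point to handle with care is that $P$ is an abstract completion of $P_0$, so one must justify that $L^*q$, the restriction $q|_{Q_\omega}$ and the trace $q(\cdot,0)$ extend continuously to $P$, and that the nonemptiness invoked in Remark~\ref{rmk:weigths_NC} is compatible with these weighted spaces; both follow once the Carleman estimate is in hand.
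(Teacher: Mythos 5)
Your proposal is correct and follows essentially the same route as the proof the paper points to (the paper gives no proof of its own, only the citations to Fursikov--Imanuvilov and to \cite{FC2013Strong}): reduce everything to the Riesz/Lax--Milgram problem \eqref{FI-LM} in the completed space $P$, where the bilinear form is the scalar product by construction, the only nontrivial ingredient being the continuity of $q\mapsto\int_\Omega y_0\,q(x,0)\,dx$ on $P$ via a global Carleman/observability estimate; then recover $(y,v)$ from $p$, check feasibility by transposition and optimality by the orthogonality (Pythagoras) identity, all of which you do correctly. One point you should not leave as a ``$\pm$'' hedge: substituting into your weak identity $\iint_Q y\,L^*q\,dx\,dt-\iint_{Q_\omega}v\,q\,dx\,dt=\int_\Omega y_0\,q(x,0)\,dx$, the signs compatible with the right-hand side of \eqref{FI-LM} as written are $y=+\rho^{-2}L^*p$ and $v=-\rho_0^{-2}p\big|_{Q_\omega}$ (this is also what the paper's subsequent remark asserts, since there $\rho^{-2}L^*p(\cdot\,,0)=y_0=y(\cdot\,,0)$), so the formulas displayed in the Proposition correspond to replacing $p$ by $-p$; your derivation, carried out with a fixed sign convention, detects this discrepancy rather than inheriting it.
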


   The proof is given in~\cite{Fursikov1996Controllability};
   see also~\cite[Proposition~$2.1$]{FC2013Strong}.

\

\begin{remark}{\rm
	In this result, the hypothesis that the Carleman weights~$\rho$ and~$\rho_0$ are as in~\eqref{Carleman:weights} is crucial.
	Indeed, this is essential to prove the continuity of the linear form in the Lax-Milgram problem~\eqref{FI-LM}, thanks to a Carleman inequality.
	}\Fin  
\end{remark}

\begin{remark}{\rm
   Note that the solution to~\eqref{FI-LM} solves, at least in the weak sense, the following fourth-order in space and second-order in time problem:
   \begin{align*}
	\begin{cases}
		L(\rho^{-2}L^*p)+\rho_0^{-2} p\, \mathbbm{1}_\omega =0&\text{ in }Q,\\
		p=0,\quad \rho^{-2}L^*p=0&\text{ on }\Sigma,\\
		\rho^{-2}L^*p(\cdot\,,0)=y_0,\quad \rho^{-2}L^*p(\cdot\,,T)=0&\text{ in }\Omega;
	\end{cases}
   \end{align*}
see~\cite{Fursikov1996Controllability}.
   }\Fin
\end{remark}


\begin{remark}{\rm
	Note also that the solution to~\eqref{FI-LM} solves the following extremal problem:
	\begin{align}
	\label{Extremal:problem:p}
	\begin{cases}
		\textrm{Minimize }\displaystyle J^*(q):=\dfrac{1}{2}\iint_{Q} \rho^{-2} |L^*q|^2\,dx\,dt + \dfrac{1}{2}\iint_{Q_\omega} \rho_0^{-2} |q|^2 \,dx\,dt -\int_\Omega y_0(x)q(x,0)\,dx,\\
		\textrm{Subject to }q\in P.
	\end{cases}
    \end{align}
	In some sense, \eqref{Extremal:problem:p} can be viewed as a dual problem of~\eqref{Extremal:problem:01}.
	}\Fin
\end{remark}

\subsection{A truncated problem}

   A detailed characterization of the minimizer of~\eqref{Extremal:problem:01} is furnished by~Proposition~\ref{proposition:existence:minimizer}.
   A related numerical treatment was performed in~\cite{FC2013Strong} by using direct and mixed variational approximations of~\eqref{FI-LM}.
   
   In order to apply Lagrangian and Augmented Lagrangian methods avoiding technical difficulties, let us introduce the problem
   \begin{align}
	\label{truncated:problem}
	\begin{cases}
		\textrm{Minimize }\displaystyle  J_R(y,v):=\dfrac{1}{2}\iint_{Q} \rho_R^2 |y|^2\,dx\,dt + \dfrac{1}{2}\iint_{Q_\omega} \rho_0^2 |v|^2 \,dx\,dt,\\
		\textrm{Subject to }(y,v)\in \widetilde{\mathcal{C}}(y_0,T),
	\end{cases}
   \end{align}
where~$R > 0$,
\begin{equation}\label{weight:rho_R}
	\rho_R(x,t) := \min(R,\rho(x,t)),\quad (x,t)\in Q
\end{equation} 
and
   \begin{align*}
	\widetilde{\mathcal{C}}(y_0,T):=\{(y,v)\in L^2(Q)\times \mathcal{U} : y\textrm{ solves }\eqref{eq:main:01} \}.
   \end{align*}
   
   We point out that the new weight~$\rho_R$ does not blow up as~$t\to T^-$.

   Recall that, for each~$y_0\in L^2(\Omega)$, the weak solution to~\eqref{eq:main:01} can be written in the form~$y=Mv+\overline{y},$ where~$M$ was introduced at the beginning of Section~\ref{Section:study:extremal:problem}.
   Accordingly, we can rewrite~\eqref{truncated:problem} in such a way that the results of~\cite{Ekeland1999Convex} concerning Lagrange methods can be applied.

   Thus, let us introduce the convex functions
     \[
F_R:L^2(Q)\to \mathbb{R}, \ \ G:\mathcal{U}\to \mathbb{R}\ \text{ and } \ \overline{F}_R:L^2(Q)\to \mathbb{R},
   \]
with  
   \begin{align*}
F_R(y):=\dfrac{1}{2}\iint_Q \rho_R^2 |y|^2\,dx\,dt,\quad G(v):=\dfrac{1}{2}\iint_{Q_\omega} \rho_0^2 |v|^2 dx\,dt\quad \text{and}\quad \overline{F}_R(z):=F_R(z+\overline{y}).
   \end{align*} 
   
   Observe that, for every~$(y,v)\in \widetilde{\mathcal{C}}(y_0,T)$, one has
   \begin{align*}
	J_R(y,v)=I_R(v):=\overline{F}_R(Mv)+G(v) \quad \forall \,v\in \mathcal{U}.
   \end{align*} 
   Here, $I_R:\mathcal{U}\to \mathbb{R}$ is a well-defined proper, continuous and strictly convex function and~\eqref{truncated:problem} is equivalent to the unconstrained extremal problem
   \begin{align}
	\label{extremal:problem:02}
	\begin{cases}
		\textrm{Minimize }I_R(v),\\
		\textrm{Subject to }v\in \mathcal{U}.
	\end{cases}
   \end{align}

   Obviously, for every $R>0$, \eqref{extremal:problem:02} is uniquely solvable.  
   Furthermore, the following result holds:

\begin{proposition}\label{Proposition:convergence}
   Let~$v_R$ be, for each $R > 0$, the unique solution to~\eqref{extremal:problem:02}.
   Then one has
   \begin{align}
		\label{convergence:vR:v}
		v_R\to \widehat{v}\text{ strongly in }\mathcal{U}\text{ as }R\to +\infty ,
   \end{align}
where~$\widehat{v}$ is, together with~$\widehat{y}=M\widehat{v}+\overline{y}$, the unique solution to~\eqref{Extremal:problem:01}.
\end{proposition}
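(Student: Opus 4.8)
The plan is to treat this as an epi-convergence argument for convex functionals, exploiting that $\rho_R=\min(R,\rho)$ increases monotonically to $\rho$ as $R\to+\infty$, so that $\rho_R\leq\rho$ (hence $I_R\leq I$) and $R\mapsto I_R$ is nondecreasing. First I would establish a uniform a priori bound. Since $v_R$ minimizes $I_R$, since $\overline{F}_R\geq 0$, and since $\overline{F}_R(M\widehat{v})\leq\overline{F}(M\widehat{v})$, one has
\[
\tfrac12\|v_R\|_{\mathcal{U}}^2=G(v_R)\leq I_R(v_R)\leq I_R(\widehat{v})\leq I(\widehat{v})=J(\widehat{y},\widehat{v})<+\infty .
\]
Thus $\{v_R\}$ is bounded in the Hilbert space $\mathcal{U}$, and I can extract a subsequence $v_{R_k}\rightharpoonup v^*$ weakly in $\mathcal{U}$. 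Because the inclusion $\mathcal{U}\hookrightarrow L^2(Q_\omega)$ and the operator $M$ are bounded and linear, they are weakly continuous, so $y_{R_k}:=Mv_{R_k}+\overline{y}\rightharpoonup Mv^*+\overline{y}=:y^*$ weakly in $L^2(Q)$.

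Next I would identify $v^*$ as the minimizer of $I$. The upper (recovery) bound is immediate from minimality and $I_{R_k}\leq I$: indeed $I_{R_k}(v_{R_k})\leq I_{R_k}(\widehat{v})\leq I(\widehat{v})$, so $\limsup_k I_{R_k}(v_{R_k})\leq I(\widehat{v})$. The delicate point, and the main obstacle, is the matching lower bound for the weighted term $\overline{F}_{R_k}(Mv_{R_k})=\tfrac12\iint_Q\rho_{R_k}^2|y_{R_k}|^2$, in which both the weight and the function vary with $k$. I would handle this by a truncation-then-monotone-convergence device: fix $R_0>0$; for $k$ large enough that $R_k\geq R_0$, monotonicity gives $\iint_Q\rho_{R_k}^2|y_{R_k}|^2\geq\iint_Q\rho_{R_0}^2|y_{R_k}|^2$, and since $\rho_{R_0}$ is bounded the functional $y\mapsto\iint_Q\rho_{R_0}^2|y|^2$ is convex and strongly continuous on $L^2(Q)$, hence weakly lower semicontinuous, so $\liminf_k\iint_Q\rho_{R_0}^2|y_{R_k}|^2\geq\iint_Q\rho_{R_0}^2|y^*|^2$. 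Letting $R_0\to+\infty$ and invoking monotone convergence ($\rho_{R_0}^2\uparrow\rho^2$) yields $\liminf_k\overline{F}_{R_k}(Mv_{R_k})\geq\overline{F}(Mv^*)$. Combining this with the weak lower semicontinuity of $G$ and the superadditivity of $\liminf$ gives
\[
I(v^*)=\overline{F}(Mv^*)+G(v^*)\leq\liminf_k I_{R_k}(v_{R_k})\leq\limsup_k I_{R_k}(v_{R_k})\leq I(\widehat{v}).
\]
By the uniqueness of the minimizer of $I$ asserted in Proposition~\ref{proposition:existence:minimizer}, this forces $v^*=\widehat{v}$; since the limit does not depend on the chosen subsequence, the whole family satisfies $v_R\rightharpoonup\widehat{v}$ weakly in $\mathcal{U}$.

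Finally I would upgrade weak to strong convergence. The displayed chain pinches every inequality to an equality, so $I_{R_k}(v_{R_k})\to I(\widehat{v})$ and the two $\liminf$ inequalities for the nonnegative summands $\overline{F}_{R_k}(Mv_{R_k})$ and $G(v_{R_k})$ must both be saturated. A short argument that splits a convergent sum of nonnegative terms back into its pieces then shows in particular $G(v_{R_k})\to G(v^*)$, that is, $\|v_{R_k}\|_{\mathcal{U}}\to\|\widehat{v}\|_{\mathcal{U}}$. In the Hilbert space $\mathcal{U}$, weak convergence together with convergence of the norms implies strong convergence, which establishes \eqref{convergence:vR:v} along the subsequence and, by the usual subsequence-of-a-subsequence argument, for the full family as $R\to+\infty$.
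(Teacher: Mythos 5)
Your proof is correct. It shares the overall skeleton of the paper's argument --- the uniform bound $I_R(v_R)\leq I_R(\widehat{v})\leq J(\widehat{y},\widehat{v})$ coming from minimality and $\rho_R\leq\rho$, weak compactness, identification of the limit through lower semicontinuity and uniqueness of the minimizer, and finally convergence of the norms to upgrade weak to strong convergence --- but the delicate step is handled by a genuinely different device. The paper extracts weak $L^2$ limits $z$ and $w$ of the \emph{weighted} quantities $\rho_R(Mv_R+\overline{y})$ and $\rho_0 v_R$, sets $\tilde{y}=\rho^{-1}z$ and $\tilde{v}=\rho_0^{-1}w$, checks via Lebesgue's theorem that these are the weak limits of $Mv_R+\overline{y}$ and $v_R$, and invokes parabolic compactness to confirm that $\tilde{y}$ is the state driven by $\tilde{v}$; the inequality $J(\tilde{y},\tilde{v})\leq\liminf J_R(y_R,v_R)$ then follows from weak lower semicontinuity applied to the weighted sequences. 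You instead take the weak limit of $v_R$ in $\mathcal{U}$ itself, obtain $y_{R_k}\rightharpoonup y^*=Mv^*+\overline{y}$ for free from the boundedness of $M$, and prove the $\liminf$ inequality for the varying-weight term by freezing the truncation level at $R_0$, using weak lower semicontinuity of the fixed-weight (hence strongly continuous) quadratic functional, and letting $R_0\to+\infty$ by monotone convergence. This buys you a more self-contained argument: no identification of the weak limits of the weighted sequences and no appeal to parabolic compactness are needed, and the finiteness of $\overline{F}(Mv^*)$ (i.e.\ $y^*\in\mathcal{Y}$) comes out of the same estimate. What the paper's route buys in exchange is the strong convergence $\rho_R(Mv_R+\overline{y})\to\rho\widehat{y}$ in $L^2(Q)$ recorded in its last sentence, which your argument does not directly produce (though it could be recovered by a similar pinching). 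Both proofs close identically, by forcing $G(v_R)\to G(\widehat{v})$ and combining weak convergence with convergence of norms in the Hilbert space $\mathcal{U}$.
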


\begin{proof}
   First, we note that, for any~$R > 0$,
   \begin{align*}
I_R(\widehat{v})=\dfrac{1}{2}\iint_Q \rho_R^2 |M\widehat{v}+\overline{y}|^2\,dx\,dt + \dfrac{1}{2}\iint_{Q_\omega} \rho_0^2 |\widehat{v}|^2\,dx\,dt\leq J(\widehat{y},\widehat{v}). 
   \end{align*}
   Consequently, each~$v_R$ satisfies
   \begin{align}\label{relation:vR:J:v}
		I_R(v_R)\leq I_R(\widehat{v})\leq  J(\widehat{y},\widehat{v})\quad \forall R>0.
   \end{align}
   This shows that~$\rho_R(Mv_R+\overline{y})$ is uniformly bounded in~$L^2(Q)$ and~$\rho_0 v_R$ is uniformly bounded in~$L^2(Q_\omega)$.
   Therefore, at least for a subnet, one has that there exist~$z \in L^2(Q)$ and~$w\in L^2(Q_\om)$ with
   \begin{align}\label{proof:convergence:01}
\rho_R(Mv_R+\overline{y})\to z\text{ weakly in }L^2(Q)
\ \text{ and } \ 
\rho_0 v_R\to w\text{ weakly in }L^2(Q_\omega) \text{ as $R\to +\infty$.}
   \end{align}
   
   Let us set~$\tilde{y}:=\rho^{-1}z$ and~$\tilde{v}:=\rho_0^{-1}w$.
   It is then clear, from~\eqref{proof:convergence:01} and {\it Lebesgue's Theorem,} that 
   \begin{align*}
v_R\to \tilde{v}\text{ weakly in }L^2(Q_\omega)
\ \text{ and } \ 
Mv_R+\overline{y}\to \tilde{y}\text{ weakly in }L^2(Q).
   \end{align*}
	
	In fact, $\tilde{y}$ is the state associated to~$\tilde{v}$ with initial condition~$y_0$ and~$Mv_R+\overline{y}$ converges strongly to~$\tilde{y}$ in~$L^2(Q)$, thanks to the usual parabolic compactness results.
	Furthermore, one has 
   \begin{align*}
J(\tilde{y},\tilde{v})\leq & \liminf_{R\to \infty} \left(\dfrac{1}{2} \iint_Q \rho_R^2 |Mv_R+\overline{y}|^2 \,dx\,dt 
+\dfrac{1}{2} \iint_{Q_\omega} \rho_0^2 |v_R|^2 \,dx\,dt\right)
\\ \leq & \lim_{R\to \infty} \left(\dfrac{1}{2} \iint_{Q}\rho_R^2 |Mv + \overline{y}|^2\,dx\,dt
+ \dfrac{1}{2} \iint_{Q_\omega} \rho_0^2 |v|^2\,dx\,dt \right) = J(y,v),
   \end{align*}
for every~$(y,v)\in \mathcal{Y}\times \mathcal{U}$. Hence, $(\tilde{y},\tilde{v})=(\widehat{y},\widehat{v})$.
	
	Finally, we also deduce from the properties satisfied by the~$v_R$ that 
   \begin{align*}
		\limsup_{R\to \infty} \left(\iint_{Q} \rho_R^2 |Mv_R+\overline{y}|^2 \,dx\,dt + \iint_{Q_\omega} \rho_0^2 |v_R|^2\,dx\,dt \right)\leq J(y,v)\quad \forall (y,v) \in \mathcal{Y}\times\mathcal{U},
   \end{align*}
whence this upper limit is bounded from above by~$J(\widehat{y},\widehat{v})$ and then
   \begin{align*}
		\lim_{R\to \infty} \left(\iint_{Q} \rho_R^2 |Mv_R+\overline{y}|^2 \,dx\,dt + \iint_{Q_\omega} \rho_0^2 |v_R|^2\,dx\,dt \right)= J(\widehat{y},\widehat{v}).
   \end{align*}
	Therefore, $v_R \to \widehat{v}$ strongly in~$\mathcal{U}$, $\rho_R(Mv_R+\overline{y}) \to \rho \widehat{y}$ strongly in~$L^2(Q)$ and~\eqref{convergence:vR:v} holds. 
\end{proof}

   As a consequence of this result, we see that an appropriate strategy to solve the null controllability problem for~\eqref{eq:main:01} can be to compute the solution to~\eqref{extremal:problem:02} for large~$R$ and then take~$y_R = Mv_R + \overline{y}$ as an approximation to~$\widehat{y}$.

 \subsection{Estimates of the convergence rate of the truncated solutions}

   The following result holds:
   
\begin{proposition}\label{prop:estimate:y:T}
   Let~$v_R$ be the solution to the extremal problem~\eqref{extremal:problem:02} and let us set~$y_R=Mv_R+\overline{y}$.
   There exists a positive constant~$C$ independent of~$R$ such that
   \begin{align}\label{estimate:y:T}
\|y_R(\cdot\,,T)\|_{H_0^1(\Omega)}\leq C\,\dfrac{|\log R|}{R} \quad \forall R > 0.
   \end{align}
\end{proposition}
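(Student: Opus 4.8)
My plan is to exploit the uniform bounds established inside the proof of Proposition~\ref{Proposition:convergence}: from \eqref{relation:vR:J:v} both $\iint_Q \rho_R^2|y_R|^2\,dx\,dt$ and $\iint_{Q_\omega}\rho_0^2|v_R|^2\,dx\,dt$ are bounded by $2J(\widehat y,\widehat v)=:C_0$, uniformly in $R$, and the goal is to convert this weighted global control into a pointwise $H_0^1$ bound at $t=T$ by parabolic smoothing. The first step is to locate the time after which the truncation is active. Since $\eta^0$ is bounded, for $K_2$ large one has $m:=e^{K_2}-\max_{\overline\Omega}e^{\eta^0}>0$, whence $\rho(x,t)\ge\rho_{\min}(t):=\exp\!\big(K_1 m/(T-t)\big)$; therefore $\rho_R(x,t)=R$ for all $x\in\Omega$ as soon as $T-t\le K_1 m/\log R$. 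Setting $t_R^*:=T-K_1 m/\log R$, restricting the bound on $\rho_R y_R$ to $(t_R^*,T)$ gives $\int_{t_R^*}^T\|y_R(\cdot,t)\|^2\,dt\le C_0/R^2$.

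Next I would run a mean-value (pigeonhole) argument on the first half of $(t_R^*,T)$ to select a time $s_R\in\big(t_R^*,(t_R^*+T)/2\big)$ with simultaneously $\|y_R(\cdot,s_R)\|^2\le \frac{2}{T-t_R^*}\cdot\frac{C_0}{R^2}\le C\,\frac{\log R}{R^2}$ and $T-s_R\ge\frac12(T-t_R^*)\ge c/\log R$. Denoting by $\{U(t,s)\}$ the evolution family associated with $L$ (whose coefficients are regular enough for the standard smoothing bound $\|\nabla U(t,s)g\|\le C(t-s)^{-1/2}\|g\|$), one has the representation $y_R(\cdot,T)=U(T,s_R)y_R(\cdot,s_R)+\int_{s_R}^T U(T,t)\,\big(v_R(\cdot,t)\mathbbm 1_\omega\big)\,dt$. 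The homogeneous part is then immediately controlled: $\|U(T,s_R)y_R(\cdot,s_R)\|_{H_0^1}\le C(T-s_R)^{-1/2}\|y_R(\cdot,s_R)\|\le C\sqrt{\log R}\cdot\frac{\sqrt{\log R}}{R}=C\,\frac{\log R}{R}$, which already produces the claimed rate.

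The heart of the argument is the inhomogeneous (control) term, where the precise form of $\rho_0$ becomes essential. Using the smoothing bound and then Cauchy--Schwarz \emph{against the weight $\rho_0$} (rather than crudely extracting $\sup\rho_0^{-2}$, which would cost a spurious factor of $\log R$), I would estimate, with $\rho_{0,\min}(t):=(T-t)^{3/2}\rho_{\min}(t)$,
\begin{align*}
\Big\|\int_{s_R}^T\! \nabla U(T,t)\big(v_R\mathbbm 1_\omega\big)\,dt\Big\|
&\le C\!\int_{s_R}^T\!(T-t)^{-1/2}\,\rho_{0,\min}(t)^{-1}\,\|\rho_0 v_R(\cdot,t)\|_{L^2(\omega)}\,dt\\
&\le C\,C_0^{1/2}\Big(\int_{s_R}^T (T-t)^{-4}\,\rho_{\min}(t)^{-2}\,dt\Big)^{1/2}.
\end{align*}
After the substitution $u=(T-t)^{-1}$ the remaining integral becomes $\int_{1/(T-s_R)}^{\infty}u^{2}e^{-2K_1 m\,u}\,du$, whose asymptotics for a large lower limit give $\le C(T-s_R)^{-2}e^{-2K_1 m/(T-s_R)}\le C(\log R)^2 R^{-2}$, since $2K_1 m/(T-s_R)\ge 2\log R$. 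Taking the square root yields a contribution $\le C\,|\log R|/R$, matching the homogeneous part and establishing \eqref{estimate:y:T}.

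The main obstacle is exactly this last estimate. The interval on which the truncation is active shrinks like $1/\log R$, so the smoothing factor $(T-t)^{-1/2}$ and the factor $(T-t)^{-3/2}$ hidden inside $\rho_0^{-1}$ combine into a strongly singular $(T-t)^{-2}$, which is only tamed by the super-exponential decay $\rho_{\min}(t)^{-1}=\exp(-K_1 m/(T-t))$. One must therefore keep the exponential weight inside the time integral throughout and invoke Cauchy--Schwarz only afterwards; it is the specific power $3/2$ in the definition of $\rho_0$ in \eqref{Carleman:weights} that makes the surviving polynomial prefactor exactly $(\log R)^2$, and hence gives the rate exactly $|\log R|/R$ rather than a higher power of the logarithm.
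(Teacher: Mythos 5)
Your proof is correct, but it follows a genuinely different route from the paper's. The paper works with $z=\psi_R\,\rho_R\,y_R$, where $\psi_R$ is a temporal cut--off supported in the final layer where the truncated weight is constant ($\rho_R\equiv R$), writes the parabolic equation satisfied by $z$ (whose right--hand side is controlled by $\iint_{Q_\omega}\rho_0^2|v_R|^2$ and $\iint_Q\rho_R^2|y_R|^2$, hence by $J(\widehat y,\widehat v)$ via \eqref{relation:vR:J:v}, at the price of a factor $(T_{2,R}-T_{1,R})^{-2}\lesssim|\log R|^2$ coming from $\psi_R'$), and then invokes maximal $L^2$ parabolic regularity to get $\|z\|_{C^0([T_{2,R},T];H_0^1)}\leq CJ(\widehat y,\widehat v)^{1/2}|\log R|$, from which \eqref{estimate:y:T} follows after dividing by $R$. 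You instead avoid the cut--off and the equation for the weighted state altogether: you extract a good intermediate time $s_R$ by a mean--value argument (paying one factor $\sqrt{\log R}$ there), and then use the Duhamel representation together with the gradient smoothing bound $\|\nabla U(t,s)g\|\leq C(t-s)^{-1/2}\|g\|$ for the evolution family, keeping the exponential decay of $\rho_{0,\min}^{-1}$ inside the time integral before applying Cauchy--Schwarz. Both arguments exploit exactly the same two inputs -- the uniform bound \eqref{relation:vR:J:v} and the fact that the layer where $\rho_R\equiv R$ has width $\sim K_1m/\log R$ -- and both lose precisely a factor $|\log R|$ per "derivative in time of the localization'' (the paper through $\psi_R'$ and the constant in maximal regularity, you through $(T-s_R)^{-1/2}$ and the pigeonhole). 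What your version buys is that the only black box is the standard $(t-s)^{-1/2}$ smoothing estimate (valid here since $a\in W^{1,\infty}(Q)$, $a\geq a_0>0$), and the computation with the explicit weight \eqref{Carleman:weights} is done transparently via the substitution $u=(T-t)^{-1}$; what the paper's version buys is a single application of maximal regularity with no need for the evolution--family formalism. One shared caveat, not a gap: both proofs tacitly require $R$ large enough (so that $t_R^*\in(0,T)$ and $\log R>0$); for bounded $R$ the estimate \eqref{estimate:y:T} as literally stated degenerates near $R=1$, and the constant should be understood as absorbing that regime.
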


\begin{proof}
   The proof relies on a suitable energy estimate of~$\rho_R^{-1}q$ at times close to~$T$, with a right-hand side that does not depend of~$R$.
   
   It can be assumed that~$R$ is large enough.
   Then, let us introduce~$\rho_0^*$ with
   \begin{align*}
\rho_0^*(t)=\min_{x\in \overline{\Omega}} \rho_0(x,t).
   \end{align*}	
   Denote by~$T_R$ the time at which~$\rho_0^*(T_R)=R$.
   Then, since~$\rho_0^*$ is increasing, we have 
   \begin{align}\label{rho:0:T:star}
\rho_0(x,t)\geq R \quad \forall x\in \Omega, \ \ \forall t\in [T_R,T].
   \end{align}

   Now, let~$T_{1,R},T_{2,R}\in (T_R,T)$ be given with~$T-T_{1,R}\leq1$ and~$T_{2,R}-T_{1,R} \geq (T - T_R)/2$.
   Then, let us consider a cut-off function~$\psi_R \in C^\infty([0,T])$ satisfying
   \begin{align*}
			0\leq \psi_R\leq 1\text{ in }[0,T],\quad 
			\psi_R=0\text{ in } [0,T_{1,R}] \quad \text{and} \quad
			\psi_R=1\text{ in }[T_{2,R},T]
   \end{align*}
and let us set~$z := \psi_R \,\rho_R \,y_R$.
   Then~$z$ solves the problem
   \begin{align*}
		\begin{cases}
			Lz= (\chi_R \rho_R) v_R \mathbbm{1}_\omega 
			+ \pt(\chi_R \rho_R )y_R  &\text{ in }Q,\\
			z=0&\text{ on }\Sigma,\\
			z(\cdot\,,0)=0&\text{ in }\Omega.
		\end{cases}
   \end{align*}
	
   Taking into account~\eqref{rho:0:T:star} and~\eqref{relation:vR:J:v}, we find that
   \begin{align*}
   \begin{array}{l} \ds
		\iint_{Q} |Lz|^2\,dx\,dt \leq \iint_{\om \times (T_{1,R},T)} \rho_0^2 |v_R|^2\,dx\,dt + \iint_{\om \times (T_{1,R}},{T_{2,R})}(\chi'_{R})^2\rho_R^2 |y_R|^2\,dx\,dt\\ 
	\noalign{\smallskip}\ds \phantom{\iint_{Q} |Lz|^2\,dx\,dt }	\leq
\iint_{\omega\times(0,T)} \rho_0^2 |v_R|^2\,dx\,dt
+ \frac{C}{(T_{2,R}-T_{1,R})^2} \iint_Q \rho_R^2 |y_R|^2\,dx\,dt \\ 
	\noalign{\smallskip} \ds \phantom{\iint_{Q} |Lz|^2\,dx\,dt }\leq \frac{C}{(T_{2,R}-T_{1,R})^2} J(\widehat{y},\widehat{v}),
   \end{array}
   \end{align*}
where~$\widehat{v}$ is the solution to~\eqref{Extremal:problem:01} and~$\widehat{y}=M\widehat{v}+\overline{y}$.
   Furthermore, from the particular form of~$\rho_0$, we now that~$T - T_R \leq C/|\log R|$ and this implies
   \begin{align*}
		\|z\|_{C^0(0,T];H_0^1(\Omega))}^2 + \|z\|_{L^2(0,T;H_0^1(\Omega)\cap H^2(\Omega))}^2 \leq \frac{C}{(T_{2,R}-T_{1,R})^2} J(\widehat{y},\widehat{v}) \leq C J(\widehat{y},\widehat{v}) |\log R|^2 ,
   \end{align*}
for some~$C>0$ independent of~$R$.
   Hence,
   \begin{align*}
		\|\rho_R^{-1}(\cdot\,,T)q_R(\cdot\,,T)\|_{H_0^1}\leq \|\rho_R^{-1}q_R\|_{C^0([T_{2,R},T];H_0^1(\Omega))}\leq C J(\widehat{y},\widehat{v})^{1/2} |\log R|
   \end{align*}
and, from the Cauchy-Scharwz inequality and the fact that~$\rho_R(\cdot\,,T) \equiv R$ in~$\Omega$, we find that
   \begin{align*}
\| \nabla y_R(\cdot\,,T) \| = \|\rho_R(\cdot\,,T)^{-1} \nabla (\rho_R(\cdot\,,T)q_R(\cdot\,,T))\|
\leq \dfrac{C\,|\log R|}{R} .
   \end{align*}

   This implies~\eqref{estimate:y:T} and ends the proof.
\end{proof}

\section{Solving the truncated extremal problem using Lagrange methods}
\label{Section:study:extremal:problem}

 In this section, we use more or less standard results from convex analysis to reformulate~\eqref{extremal:problem:02} appropriately and deduce related efficient algorithms.

\subsection{The Lagrangian}\label{Sec2.2}

   Let us introduce the convex functions
\[
F_R:L^2(Q)\to \mathbb{R}, \ \ G:\mathcal{U}\to \mathbb{R}\ \text{ and } \ \overline{F}_R:L^2(Q)\to \mathbb{R},
   \]
with  
   \begin{align*}
F_R(y):=\dfrac{1}{2}\iint_Q \rho_R^2 |y|^2\,dx\,dt,\quad G(v):=\dfrac{1}{2}\iint_{Q_\omega} \rho_0^2 |v|^2 dx\,dt\quad \text{and}\quad \overline{F}_R(z):=F_R(z+\overline{y}) .
   \end{align*}

   For every~$R>0$, we will consider the family of perturbations~$\{\Phi_R(\cdot\,,p)\}_{p\in L^2(Q)}$, defined as follows:
   \begin{align*}
\Phi_R(v,p):=\overline{F}_R(Mv-p)+G(v) \quad \forall v\in \mathcal{U},\ \ \forall p\in L^2(Q).
   \end{align*}

   We will adapt the strategy and results in~\cite{Ekeland1999Convex}.
   
   Clearly, $\Phi(v,0)=I_R(v)$ for all~$ v\in \mathcal{U}$. 
   Moreover, the convex conjugate~$\Phi_R^*$ of~$\Phi_R$ is given by 
   \begin{align*}
\Phi_R^*(w,q)=\overline{F}_R^*(-q)+G^*(M^*(w+q)) ,
   \end{align*}
where
   \begin{align*}
\overline{F}_R^*(u):=\dfrac{1}{2}\iint_Q \rho_R^{-2} |u|^2\,dx\,dt-\iint_Q \overline{y}u\,dx\,dt,\quad G^*(w):=\dfrac{1}{2}\iint_{Q_\omega} \rho_0^{-2} |w|^2 dx\,dt .
   \end{align*} 
   
   In particular, we see that
   \begin{align*}
	\Phi_R^*(0,q)=&\overline{F}_R^*(-q)+G^*(M^*q)\\
	=&\dfrac{1}{2}\iint_{Q} \rho_R^{-2}|q|^2\,dx\,dt +\dfrac{1}{2}\iint_{Q_\omega} \rho_0^{-2}|M^*q|^2\,dx\,dt + \iint_{Q} q\overline{y}\,dx\,dt
   \end{align*}
for every~$q\in L^2(Q)$.

   The dual problem of~\eqref{extremal:problem:02} is
   \begin{align}
	\label{extremal:problem:03}
	\begin{cases}
		\textrm{Maximize }-\Phi_R^*(0,q) , \\
		\textrm{Subject to }q\in L^2(Q),
	\end{cases}
   \end{align}
while the Lagrangian associated with~$\Phi_R$ reads
   \begin{align*}
	L_R(v,q) := &~-F_R^*(-q)+G(v)-(q,Mv+\overline{y})_{L^2(Q)}\\
	=&~\dfrac{1}{2}\iint_{Q_\omega} \rho_0^2 |v|^2\,dx\,dt -\dfrac{1}{2}\iint_{Q} \rho_R^{-2}|q|^2\,dx\,dt -\iint_{Q} q(Mv+\overline{y})\,dx\,dt.
   \end{align*}

\begin{theorem}\label{thm:existence:saddle:point:Lagrangian}
   Let~$R>0$ be given and let the functions~$\rho_R$ and~$\rho_0$ be as in~\eqref{weight:conditions}, \eqref{Carleman:weights} and~\eqref{weight:rho_R}.
   Then, one has:

\begin{itemize}

\item The extremal problems~\eqref{extremal:problem:02} and~\eqref{extremal:problem:03} are uniquely solvable.

\item The following identities hold:
	\begin{align*}
\inf_{v\in \mathcal{U}}I_R(v)=\inf_{v\in \mathcal{U}} \sup_{q\in L^2(Q)} L_R(v,q) \ \text{ and }\ \sup_{q\in L^2(Q)} -\Phi_R^*(0,q)=\sup_{q\in L^2(Q)} \inf_{v\in \mathcal{U}} L_R(v,q) .
    \end{align*}
	 
\item Let~$v_R\in \mathcal{U}$ and~$q_R\in L^2(Q)$ be the solutions respectively to~\eqref{extremal:problem:02} and~\eqref{extremal:problem:03}.
   Then~$(v_R,q_R)$ is the unique saddle-point of~$L_R$, that is, the unique couple in~$\mathcal{U}\times L^2(Q)$ satisfying 
         \begin{align*}
\inf_{v\in \mathcal{U}} \sup_{q\in L^2(Q)} L_R(v,q) = L_R(v_R,q_R)=\sup_{q\in L^2(Q)}\inf_{v\in \mathcal{U}} L_R(v,q).
	 \end{align*}
	
\item The following optimality characterization holds
         \begin{align}\label{extremality:conditions}
\begin{cases}
\overline{F}_R(Mv_R)+\overline{F}_R^*(-q_R)=-(Mv_R,q_R)_{L^2(Q)},\\
G(v_R)+G^*(M^*q_R)=\langle M^*q_R,v_R \rangle_{\mathcal{U}',\mathcal{U}}.
\end{cases}
	 \end{align}
   Consequently, setting~$y_R := Mv_R+\overline{y}$, one has 
	 \begin{align}\label{solutions:yR:vR}
y_R=-\rho_R^{-2}q_R,\quad v_R=\rho_0^{-2}M^*q_R.
	 \end{align}

\end{itemize}
\end{theorem}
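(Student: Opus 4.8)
The plan is to cast the whole statement inside the classical convex duality framework of Ekeland and Temam \cite{Ekeland1999Convex}, using the perturbation family $\Phi_R$ already introduced. First I would record the structural facts that make the machinery applicable: $\overline{F}_R$ and $G$ are nonnegative, continuous, strictly convex quadratic functionals, $M$ is a bounded linear operator, and hence $\Phi_R(\cdot\,,\cdot)$ is proper, convex and continuous on $\mathcal{U}\times L^2(Q)$. The decisive feature of the truncation \eqref{weight:rho_R} is that $\rho_R$ is bounded on $Q$ between two positive constants, so that $\rho_R^{-2}$ is likewise bounded above and below by positive constants; this is exactly what makes $\overline{F}_R^*$ a nondegenerate, coercive, strictly convex quadratic form, and it is the point where the blow-up of $\rho^{-2}$ near $t=T$ has been neutralized.

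For the unique solvability of the primal problem \eqref{extremal:problem:02}, I would invoke the direct method: $I_R$ is proper, continuous, strictly convex and coercive, coercivity coming from the term $G(v)=\tfrac12\|\rho_0 v\|_{L^2(Q_\omega)}^2$, so it has exactly one minimizer $v_R$. For the dual problem \eqref{extremal:problem:03}, I would proceed along two complementary lines. On one hand, the explicit expression $-\Phi_R^*(0,q)=-\overline{F}_R^*(-q)-G^*(M^*q)$ is strictly concave and coercive in $q$, since $\iint_Q\rho_R^{-2}|q|^2\,dx\,dt$ dominates the linear term $\iint_Q \overline{y}\,q\,dx\,dt$, so the direct method yields a unique maximizer $q_R$. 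On the other hand, to obtain the absence of duality gap I would verify the stability hypothesis at the origin: the value function $h(p):=\inf_{v\in\mathcal{U}}\Phi_R(v,p)$ is convex, and since $h(p)\le \Phi_R(0,p)=\overline{F}_R(-p)$ is finite and continuous in $p$, $h$ is bounded above on a neighborhood of $0$; a convex function locally bounded above is continuous, hence subdifferentiable, at $0$. By the duality theorem of \cite{Ekeland1999Convex} this gives simultaneously the equality of the optimal values of \eqref{extremal:problem:02} and \eqref{extremal:problem:03} and the attainment of the dual supremum, so that the min--sup and sup--inf identities stated in the theorem hold.

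With no duality gap and both extremal problems attained, the identification of the saddle point is standard: the set of saddle points of $L_R$ equals the product of the solution sets of the primal and dual problems, both of which are singletons, whence $(v_R,q_R)$ is the unique saddle point. Finally, for the extremality conditions \eqref{extremality:conditions} I would exploit the equality $\overline{F}_R(Mv_R)+G(v_R)+\overline{F}_R^*(-q_R)+G^*(M^*q_R)=0$ coming from $I_R(v_R)=-\Phi_R^*(0,q_R)$. Writing the two Fenchel--Young inequalities for the pairs $(\overline{F}_R,\overline{F}_R^*)$ and $(G,G^*)$, one sees that the sum of the two nonnegative Fenchel gaps equals $(Mv_R,q_R)_{L^2(Q)}-\langle M^*q_R,v_R\rangle_{\mathcal{U}',\mathcal{U}}$, which vanishes by the very definition of the adjoint $M^*$. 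Since each gap is nonnegative and their sum is zero, both equalities in \eqref{extremality:conditions} hold. Differentiating the quadratic functionals then turns these equalities into the subdifferential relations $-q_R\in\partial\overline{F}_R(Mv_R)$ and $M^*q_R\in\partial G(v_R)$, i.e. $-q_R=\rho_R^2(Mv_R+\overline{y})=\rho_R^2 y_R$ and $M^*q_R=\rho_0^2 v_R$, which are exactly \eqref{solutions:yR:vR}.

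The main obstacle, I expect, is the stability and continuity of the value function $h$ at the origin that underlies the no-gap statement; everything else is a routine consequence of the strict convexity and coercivity granted by the boundedness of $\rho_R$ and $\rho_R^{-1}$. One should also take some care that the Fenchel pairing for $G$ is read in the weighted duality $\langle\cdot\,,\cdot\rangle_{\mathcal{U}',\mathcal{U}}$ and that this pairing is compatible with the $L^2(Q_\omega)$ inner product through the weight $\rho_0$, so that the cancellation $(Mv_R,q_R)_{L^2(Q)}=\langle M^*q_R,v_R\rangle_{\mathcal{U}',\mathcal{U}}$ is legitimate.
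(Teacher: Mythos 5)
Your argument is correct and follows essentially the same route as the paper, which simply invokes the duality results of Ekeland--Temam (Chapter III, Proposition 4.2) for every assertion except the implication \eqref{extremality:conditions} $\Rightarrow$ \eqref{solutions:yR:vR}; you usefully spell out the coercivity and the stability/no-gap verification that the paper delegates to that citation. The only cosmetic difference is in the last step: the paper derives \eqref{solutions:yR:vR} by writing the first Fenchel equality explicitly and completing the square to obtain $\iint_Q|\rho_R y_R+\rho_R^{-1}q_R|^2\,dx\,dt=0$, whereas you read the same equalities as the subdifferential relations $-q_R\in\partial\overline{F}_R(Mv_R)$ and $M^*q_R\in\partial G(v_R)$, which is equivalent for these quadratic functionals.
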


\

   All these assertions are immediate consequences of the results in~\cite{Ekeland1999Convex} (see Chapter III, Proposition 4.2) except the fact that~\eqref{extremality:conditions} implies~\eqref{solutions:yR:vR}. But this follows from the values taken by~$\overline{F}_R$ and~$G$ and their convex conjugate functions.

   Indeed, for instance, the first identity in~\eqref{extremality:conditions} reads
   \begin{align*}
\dfrac{1}{2}\iint_Q \rho_R^2 |Mv_R+\overline{y}|^2\,dx\,dt+\dfrac{1}{2}\iint_{Q} \rho_R^{-2}|q_R|^2\,dx\,dt =-\iint_{Q} (Mv_R+\overline{y})q_R\,dx\,dt,
   \end{align*} 
that is to say,
   \begin{align*}
\iint_{Q} |\rho_R y_R+\rho_R^{-1} q_R|^2 \,dx\,dt=0
   \end{align*}
and the first equality in~\eqref{solutions:yR:vR} is found.
   A similar argument  from the second identity in~\eqref{extremality:conditions} leads to the second one.

\subsection{The Augmented Lagrangian}\label{Section:Augmented:Lagrangian:methods}

   Before going further, let us introduce the function
   \begin{align*}
	\mathcal{L}_R(y,v,q) := J_R(v,y)-(q,Mv+\overline{y}-y)_{L^2(Q)} \quad \forall (y,v,q) \in L^2(Q)\times \mathcal{U}\times L^2(Q).
   \end{align*}

   The unique saddle-point~$(v_R,q_R)$ of~$L_R$ is related to the unique saddle-point~$(v_R,y_R,q_R)$ of~$\mathcal{L}_R$ through the identity~$y_R=Mv_R+\overline{y}$.
   
   Indeed, we have by definition 
   \begin{align*}
	L_R(v,q)=\inf_{p\in L^2(Q)}\{\Phi_R(v,p)-(q,p)_{L^2(Q)}\}=\inf_{y\in L^2(Q)} \mathcal{L}_R(y,v,q),
   \end{align*}
where we have used the change of variables~$y=Mv+\overline{y}-p$.
   Then, it is clear that
   \begin{align*}
	\sup_{q\in L^2(Q)} \inf_{v\in \mathcal{U}} L_R(v,q)=\sup_{q\in L^2(Q)} \inf_{(y,v)\in L^2(Q)\times \mathcal{U}} \mathcal{L}_R(y,v,q)
   \end{align*} 
and also 
   \begin{align*}
\inf_{v\in \mathcal{U}} \sup_{q\in L^2(Q)} L_R(v,q)=\inf_{(y,v)\in L^2(Q)\times \mathcal{U}}\sup_{q\in L^2(Q)} \mathcal{L}_R (y,v,q).
   \end{align*}
   Consequently, for every~$R>0$, $\mathcal{L}_{R}$ possesses a unique saddle-point~$(v_R,y_R,q_R)\in L^2(Q)\times \mathcal{U}\times L^2(Q)$, with~$(y_R,v_R)$ being the unique solution to~\eqref{truncated:problem}.
   
   From now on, it will be said that~$y$ and~$v$ are the {\it primal} variables and~$q$ is the {\it dual} variable.
      
\

   In order to improve the convergence of Lagrangian methods, it is natural to consider the {\it Augmented Lagrangian} associated to~$J_R$.
   For any given~$R>0$ and~$K > 0$, it is given by
   \begin{align*}
	\begin{split} 
	\mathcal{L}_{R,K}(y,v,q):=\mathcal{L}_R(y,v,q)+\dfrac{K}{2}\|Mv+\overline{y}-y\|_{L^2(Q)}^2 \quad \forall (y,v,q)\in L^2(Q)\times \mathcal{U}\times L^2(Q).
	\end{split}
   \end{align*} 
   
%
%
%

\

   The next result asserts that the Lagrangians~$\mathcal{L}_{R}$ and~$\mathcal{L}_{R,K}$ have the same (unique) saddle-point:

\begin{theorem}
	Let~$R>0$ and~$K>0$ be given. The following holds:
	
\begin{itemize}
	
\item[(a)] Every saddle-point of~$\mathcal{L}_{R}$ is a saddle-point of~$\mathcal{L}_{R,K}$ and conversely.
   Consequently, there exists a unique saddle-point~$(y_R,v_R,q_R)$ of~$\mathcal{L}_{R,K}$ that satisfies~\eqref{solutions:yR:vR}.
		
\item[(b)] Furthermore, $\mathcal{L}_{R,K}(y_R,v_R,q_R) = \mathcal{L}_{R}(y_R,v_R,q_R)$.
	
\end{itemize}
\end{theorem}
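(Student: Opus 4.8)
The plan is to exploit the single structural fact that the penalty term $\frac{K}{2}\|Mv+\overline{y}-y\|_{L^2(Q)}^2$ is nonnegative and vanishes exactly on the affine constraint manifold $\{(y,v):y=Mv+\overline{y}\}$, together with the fact that both $\mathcal{L}_R$ and $\mathcal{L}_{R,K}$ depend affinely on the dual variable $q$.

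First I would observe that at any saddle-point of either $\mathcal{L}_R$ or $\mathcal{L}_{R,K}$ the constraint $y=Mv+\overline{y}$ must hold. Indeed, the only $q$-dependence in both functions is the affine term $-(q,Mv+\overline{y}-y)_{L^2(Q)}$, so the requirement that $q_R$ maximize over all of $L^2(Q)$ forces the residual $Mv_R+\overline{y}-y_R$ to vanish, since otherwise the supremum in $q$ would be $+\infty$. Consequently the penalty term is zero at any such point, which already yields assertion (b): $\mathcal{L}_{R,K}(y_R,v_R,q_R)=\mathcal{L}_R(y_R,v_R,q_R)$.

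For the forward implication in (a), suppose $(y_R,v_R,q_R)$ is a saddle-point of $\mathcal{L}_R$. Because the constraint holds, $\mathcal{L}_{R,K}(y_R,v_R,q)=\mathcal{L}_R(y_R,v_R,q)$ for every $q$, so maximality in $q$ transfers verbatim. For minimality in $(y,v)$ I would simply use $\mathcal{L}_{R,K}(y,v,q_R)=\mathcal{L}_R(y,v,q_R)+\frac{K}{2}\|Mv+\overline{y}-y\|_{L^2(Q)}^2\geq \mathcal{L}_R(y,v,q_R)\geq \mathcal{L}_R(y_R,v_R,q_R)=\mathcal{L}_{R,K}(y_R,v_R,q_R)$, the last step being (b). This direction is automatic, as the penalty can only raise the functional.

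The converse is the delicate step. Assume $(y_R,v_R,q_R)$ is a saddle-point of $\mathcal{L}_{R,K}$; as above the constraint holds, hence $\mathcal{L}_{R,K}(y_R,v_R,\cdot)=\mathcal{L}_R(y_R,v_R,\cdot)$ and maximality in $q$ for $\mathcal{L}_R$ is immediate. The nontrivial part is minimality in $(y,v)$: the saddle inequality for $\mathcal{L}_{R,K}$ yields only
\[
\mathcal{L}_R(y_R,v_R,q_R)\leq \mathcal{L}_R(y,v,q_R)+\frac{K}{2}\|Mv+\overline{y}-y\|_{L^2(Q)}^2,
\]
and the quadratic penalty must be removed. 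Here I would use that $\mathcal{L}_R(\cdot\,,\cdot\,,q_R)$ is convex in $(y,v)$, being $J_R$ plus an affine term, and that the residual map $(y,v)\mapsto Mv+\overline{y}-y$ is affine and vanishes at $(y_R,v_R)$. Testing the inequality along the segment $(y_\theta,v_\theta):=(1-\theta)(y_R,v_R)+\theta(y,v)$, the residual scales as $\theta(Mv+\overline{y}-y)$, so the penalty becomes $\frac{K}{2}\theta^2\|Mv+\overline{y}-y\|_{L^2(Q)}^2$; combining this with convexity of $\mathcal{L}_R$ in $(y,v)$, dividing by $\theta>0$ and letting $\theta\to 0^+$ eliminates the penalty and gives $\mathcal{L}_R(y_R,v_R,q_R)\leq \mathcal{L}_R(y,v,q_R)$. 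This shows $(y_R,v_R,q_R)$ is a saddle-point of $\mathcal{L}_R$. Uniqueness then follows from the uniqueness of the saddle-point of $\mathcal{L}_R$ granted by Theorem~\ref{thm:existence:saddle:point:Lagrangian}, and the characterization~\eqref{solutions:yR:vR} is inherited through the equivalence. The main obstacle is precisely this segment/blow-up argument that strips off the penalty in the converse direction.
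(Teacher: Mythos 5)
Your proof is correct, and its overall architecture (derive the constraint $y=Mv+\overline{y}$ from maximality in the dual variable, note that the penalty then vanishes, and transfer the two saddle inequalities) coincides with the paper's. The one place where you genuinely diverge is the delicate converse step, stripping the penalty from the minimality inequality. The paper does this by invoking differentiability: since $(y_R,v_R)$ minimizes the strictly convex, differentiable map $\mathcal{L}_{R,K}(\cdot\,,\cdot\,,q_R)$ and the penalty has vanishing gradient on the constraint manifold, the explicit first-order system $\rho_R^{2}y+q=0$, $v=\rho_0^{-2}M^*q$ follows, and convexity of $\mathcal{L}_R$ then gives $\mathcal{L}_R(z,w,q_R)\geq \mathcal{L}_R(y_R,v_R,q_R)$ via the subgradient inequality. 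You instead test along the segment $(y_\theta,v_\theta)$, use that the residual is affine and vanishes at $(y_R,v_R)$ so the penalty scales as $\theta^2$, combine with convexity of $\mathcal{L}_R$, divide by $\theta$ and let $\theta\to 0^+$. Your argument is slightly more elementary and more robust (it needs only convexity, not differentiability, of the unpenalized Lagrangian), whereas the paper's computation has the side benefit of producing the optimality identities that are exactly the characterization~\eqref{solutions:yR:vR}; in your version that characterization is instead inherited from the equivalence with $\mathcal{L}_R$ and Theorem~\ref{thm:existence:saddle:point:Lagrangian}, which is also legitimate. Both routes are sound.
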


\begin{proof}
   Suppose that~$(y,v,q)\in L^2(Q)\times \mathcal{U}\times L^2(Q)$ is a saddle point of~$\mathcal{L}_{R}$.
   Then,
   \begin{align}
		\label{saddle:point:L:R}
		\mathcal{L}_R(y,v,p)\leq \mathcal{L}_{R}(y,v,q)\leq \mathcal{L}_R(z,w,q) \quad \forall p \in L^2(\Omega), \ \ \forall (z,w)\in L^2(Q)\times \mathcal{U} .
   \end{align}
   From the first inequality, we deduce that~$(p-q,Mv+\overline{y}-y)_{L^2(Q)}\leq 0$ for all~$q\in L^2(Q)$, whence
   \begin{align}
		\label{saddle:point:restriction}
		y = Mv + \overline{y}.
   \end{align}
   Therefore, from the definition of~$\mathcal{L}_{R,K}$ and the first inequality in~\eqref{saddle:point:L:R}, it is clear that 
   \begin{align*}
		\mathcal{L}_{R,K}(v,y,p)\leq \mathcal{L}_R(v,y,q) \quad \forall p\in L^2(Q).
   \end{align*}
	
	On the other hand, from the second inequality of~\eqref{saddle:point:L:R}, using~\eqref{saddle:point:restriction}, we see that
   \begin{align*}
		\mathcal{L}_{R,K}(y,v,q)=\mathcal{L}_{R}(y,v,q)\leq \mathcal{L}_R(z,w,q)\leq \mathcal{L}_{R,K}(z,w,q) \quad \forall (z,w)\in L^2(Q)\times \mathcal{U} .
   \end{align*}
	This proves that~$(y,v,q)$ is a saddle point of~$\mathcal{L}_{R,K}$.
	
	Conversely, suppose that~$(y,v,q)$ is a saddle point of~$\mathcal{L}_{R,K}$. Then
   \begin{align}
		\label{saddle:point:L:R:kappa}
		\mathcal{L}_{R,K}(y,v,p)\leq \mathcal{L}_{R,K}(y,v,q)\leq \mathcal{L}_{R,K}(z,w,q)\quad \forall p \in L^2(\Omega), \ \ \forall (z,w)\in L^2(Q)\times \mathcal{U} .
   \end{align}
From the first inequality, we deduce again~\eqref{saddle:point:restriction}. In particular, this means that 
   \begin{align*}
		\mathcal{L}_R(y,v,p)\leq \mathcal{L}_R(y,v,q) \quad \forall q\in L^2(Q).
   \end{align*}
	 
	Now, the second inequality of~\eqref{saddle:point:L:R:kappa} implies that~$(y,v)$ is the unique minimizer of the strictly convex and differentiable function~$\mathcal{L}_{R,K}(\cdot\,,\cdot\,,q)$.
	The characterization of~$(y,v)$ as a minimizer together with~\eqref{saddle:point:restriction} implies the following identities: 
   \begin{align*}
\rho_R^{2}y +q=0,\quad v-\rho_0^{-2}M^*q=0.
   \end{align*}
	
	On the other hand, from the convexity of~$\mathcal{L}_R$, we deduce that, for each~$(z,w)\in L^2(Q)\times \mathcal{U}$, one has	
   \begin{align*}
		\mathcal{L}_R(z,w,q) &\geq \mathcal{L}_R(y,v,q)+(\nabla_{y,v} \mathcal{L}_R(y,v,q),(y-z,v-w))_{L^2(Q) \times L^2(Q_\omega)}\\
		&=\mathcal{L}_R(y,v,q).
   \end{align*}
	
	This proves that~$(y,v,q)$ is a saddle point of ~$\mathcal{L}_{R}$ and therefore assertion {\it(a)} holds.
	
	Assertion~{\it(b)} is deduced directly from~{\it(a)} and~Theorem~\ref{thm:existence:saddle:point:Lagrangian}.  
\end{proof}

\section{Some iterative algorithms}
\label{Section:iterative:algorithms}
\subsection{Uzawa's algorithms}

   In this section, we will indicate how to solve~\eqref{truncated:problem} using {\it Uzawa's algorithm.}
   Recall that this is just the optimal step gradient method applied to the dual problem~\eqref{extremal:problem:03}. 

   Thus, let us set 
   \begin{align*}
	\mathcal{J}_R^*(q):=\Phi_R^*(0,q)=\dfrac{1}{2}\iint_{Q} (\rho_R^{-2}|q|^2+\rho_0^{-2}|M^*q|^2 +2q\overline{y})\,dx\,dt \quad \forall q\in L^2(Q)
   \end{align*} 
and let us rewrite the dual problem~\eqref{extremal:problem:03} in the form 
   \begin{align}
	\label{problem:mathcalJ}
	\begin{cases}
		\textrm{Minimize }\mathcal{J}_R^*(q)\\
		\textrm{Subject to }q\in L^2(Q).
	\end{cases}
   \end{align}

%


   We note that~$\mathcal{J}_R^*$ is a quadratic functional.
   More precisely, one has
   \begin{align*}
	\mathcal{J}_R^*(q)=\dfrac{1}{2}a_R(q,q) + \ell (q) \quad \forall q \in L^2(Q),
   \end{align*}
where~$a_R(\cdot\,,\cdot)$ and~$\ell(\cdot)$ are respectively given by 
   \begin{align*}
	a_R(q,q'):=\iint_{Q} (\rho_R^{-2}qq'+\rho_0^{-2}M^*qM^*q')\,dx\,dt,
	\quad \ell(q'):=\iint_{Q} \overline{y} q' \,dx\,dt .
   \end{align*} 

   The bilinear form~$a_R(\cdot\,,\cdot)$ is symmetric and coercive on~$L^2(Q)\times L^2(Q)$.
   Consequently, it is completely natural and appropriate to apply Uzawa's algorithm, denoted ALG~1 in this paper.
   

\begin{algorithm}[h]
\caption{Uzawa's Algorithm (ALG~1, optimal step gradient version).}
\begin{algorithmic}[1]

\Procedure{Uzawa01}{$\overline{y},q^0,R,itmax,tol$}       
    \State Set~$err\leftarrow 1$ and~$k\leftarrow 0$.

    \While{$k \leq itmax$ and~$err\geq tol$ }  
    	\State Compute
    	$d^k\leftarrow \rho_R^{-2}q^k+M(\rho_0^{-2}M^*q^k)+\overline{y}.$ 
    	\State Compute the optimal step 
       \begin{align}
		\label{rk:optimal:step}
		&\displaystyle r_k\leftarrow \,\argmin\limits_{r\in \mathbb{R}_+}\,\mathcal{J}_R^*(q^k-rd^k),\\
		\nonumber 
		&q^{k+1}\leftarrow \,q^k-r_k d^k.
   \end{align}
	\If{$\displaystyle{\|q^{k+1}-q^{k}\|_{L^2(Q)}\over\|q^k\|_{L^2(Q)}}\leq tol$} \Comment{Convergence test}
		\State Set~$q\leftarrow q^{k+1}$.
		\Else 
		\State~$err \leftarrow \displaystyle{\|q^{k+1}-q^{k}\|_{L^2(Q)}\over\|q^k\|_{L^2(Q)}}$.
        \State~$k \leftarrow k+1$. 
	\EndIf
        
    \EndWhile
    \State  Define~$y\leftarrow -\rho_R^{-2}q$ and~$v\leftarrow \rho_0^{-2}M^*q$.
    \Comment{Computing the state and the control}
\EndProcedure

\end{algorithmic}
\end{algorithm}


   The weights~$\rho_R$ and~$\rho_0$ play a crucial role in the convergence properties of~ALG~1.
   Indeed, the Fr\'echet derivative of~$\mathcal{J}_R^*$ is given by
   \begin{align*}
	D\mathcal{J}_R^*(q)=\rho_R^{-2}q+M(\rho_0^{-2}M^*q)+\overline{y} \quad \forall q\in L^2(Q)
   \end{align*}
and it is therefore easy to see that
   \begin{align*}
	(D\mathcal{J}_R^*(q)-D\mathcal{J}_R^*(p),q-p)_{L^2(Q)}\geq C_1(R)\|q-p\|_{L^2(Q)}^2\quad \forall p,q\in L^2(Q),
   \end{align*}
with~$C_1(R):=\min\{\rho_R^{-2}(x,t):(x,t)\in Q\}
\thicksim R^{-2}$.

   On the other hand, we have 
   \begin{align*}
	\|D\mathcal{J}_R^*(q)-D\mathcal{J}_R^*(p)\|_{L^2(Q)}\leq C_0 \|q-p\|_{L^2(Q)}\quad \forall p,q\in L^2(Q),
   \end{align*}
where~$C_0 = (\|\rho_R^{-1}\|_\infty^2 +\|\rho_0^{-2}\|_\infty^2)^{1/2}$ does not depend of~$R>0$.
   Thus, if we are able to find~$a_0,b_0>0$ such that the optimal steps~$r_k$ defined in~\eqref{rk:optimal:step} satisfy 
   \begin{align*}
	a_0\leq r_k \leq b_0 < \dfrac{2C_1(R)}{C_0^2}\thicksim R^{-2} ,
   \end{align*}
then, from standard results concerning gradient algorithms, the convergence of~ALG~1 would be ensured.

    In view of the structure of the extremal problem~\eqref{problem:mathcalJ}, it is reasonable to introduce conjugate gradient iterates to improve the convergence of~ALG~1. 
    We will consider the so called {\it Polak-Ribi\`ere version}, here denoted ALG~2.

   Note that, as in the case of ALG~1, the one-dimensional extremal problems arising at each step are elementary.
   For ALG~2, the computational work is not much harder (just the computation of~$\gamma^k$ and the new~$d^k$) but well known results suggest a better (superlinear) convergence rate.


\begin{algorithm}[h]
	\caption{Uzawa's algorithm (ALG~2, Polak-Ribi\`ere conjugate gradient version).}
	\begin{algorithmic}[1]
		\Procedure{Uzawa02}{$\overline{y},q^0,R,itmax,tol$}
			\State Compute~$g^0\leftarrow \rho_R^{-2}q^0+M(\rho_0^{-2} M^*q^0)+\overline{y}.$
			\If{$\displaystyle{\|g^0\|_{L^2(Q)}\over\|q^0\|_{L^2(Q)}}\leq tol$}
				\State Set~$q\leftarrow q^0$ and~$err\leftarrow 0$.
			\Else 
			   \begin{align*}
					&d^0\leftarrow g^0,\\
					&r_0\leftarrow \,\argmin\limits_{r\in \mathbb{R}_+} \mathcal{J}_R^*(q^0-rd^0),\\
					&q^{1}\leftarrow \,q^0-r_0 d^0,\\
					& k\leftarrow 1.
			   \end{align*} 
				\State Set~
				$err\leftarrow 1$.
			\EndIf
			\While{$k\leq itmax$ and~$err\geq tol$}
				\State Compute 
				$g^k\leftarrow\rho_R^{-2} q^k + M(\rho_0^{-2} M^*q^k) + \overline{y}$.
				\Comment{Steepest descent}
				\State Compute \Comment{Construction of the new descent direction}
				   \begin{align*}
						&\gamma_k\leftarrow\,
						\dfrac{(g^{k},g^{k}-g^{k-1})_{L^2(Q)}}{\|g^{k-1}\|_{L^2(Q)}^2},							\quad d^k\leftarrow d^{k-1} + \gamma^k g^k
						\\
						&r_k\leftarrow \,\textrm{argmin}_{r\in \mathbb{R}_+} \mathcal{J}_R^*(q^k-							rd^k),\\
						&q^{k+1}\leftarrow \,q^{k} - r_k d^k.
				   \end{align*}

				\If{$\frac{\|g^{k}\|_{L^2(Q)}}{\|g^0\|_{L^2(Q)}}\leq tol$} \Comment{Converge test}
					\State Set~$q\leftarrow q^{k}$					
				\Else
				\State~$err \leftarrow \displaystyle{\frac{\|g^{k}\|_{L^2(Q)}}{\|g^0\|_{L^2(Q)}}}$.
				\State Set~$k\leftarrow \, k+1$.
				\EndIf 
			\EndWhile
			\State Set~$y\leftarrow -\rho_R^{-2}q$ and~$v\leftarrow \rho_0^{-2} M^* q$. 
		\EndProcedure
	\end{algorithmic}
\end{algorithm}

%

\subsection{Augmented Lagrangian methods}

   We can try to argue as before and produce algorithms similar to ALG~1 and ALG~2 for~$\mathcal{L}_{R,K}$, more precisely, for the extremal problem
   \begin{align}\label{aug-Uzawa}
	\begin{cases}
		\text{Minimize }\displaystyle \mathcal{J}^*_{R,K}(q) \\
		\text{Subject to } q \in L^2(Q),  
	\end{cases}
   \end{align}  
where
   \begin{align*}
\mathcal{J}^*_{R,K}(q) = - \min_{(y,v) \in L^2(Q) \times \mathcal{U}} \mathcal{L}_{R,K}(y,v,q)\quad \forall q \in L^2(Q).
   \end{align*}
   
   Let us simplify the notation and denote~$\mathcal{J}^*_{R,K}$ by~$\mathcal{B}$.
   Then, it can be checked after some computations that, for every~$q \in L^2(Q)$, one has
   \begin{align*}
\mathcal{B}'(q) = Mv(q) + \overline{y} - y(q),
   \end{align*}  
where~$(y(q),v(q))$ is the unique solution to the linear system
   \begin{align}\label{3.7p}
   \left\{
	\begin{array}{cccl} \ds
		(K\rho_R^{-2}+1) y & - & K\rho_R^{-2} Mv &= -\rho_R^{-2}(q-K \overline{y}) ,
		\\ \noalign{\medskip} \ds
		-K \rho_0^{-2}M^*y & + & (K\rho_0^{-2} M^*M + \mathbbm{1}_\omega)v &= 
		\rho_0^{-2}M^*(q-K \overline{y}) .
	\end{array}
   \right.
   \end{align}  
   
   Note that, for~$K = 0$, \eqref{3.7p} is just~\eqref{solutions:yR:vR} for~$q = q_R$.
   
   The resolution of~\eqref{3.7p} can be easily performed using the auxiliary variable~$m:=Mv+\overline{y}-y$.
   
   Indeed, observe that for any~$K > 0$~\eqref{3.7p} can be rewritten as follows:
   \begin{align}\label{3.7pp}
   \left\{
\begin{array}{cccl} \ds
		\dfrac{1}{K} \rho_0^2 v  & + & M^*m & = \dfrac{1}{K} M^*q ,
		\\ \noalign{\medskip} \ds
		-Mv & + & (1+K\rho_R^{-2}) m & = \overline{y} + \rho_R^{-2}q ,
\end{array}
   \right.
   \end{align}
with~$y = Mv + \overline{y} - m$.
   The analog of Uzawa's algorithm ALG~1, that is, the optimal step gradient algorithm for~\eqref{aug-Uzawa} will be denoted ALG~3.

\begin{algorithm}[!h]
	\caption{Uzawa's algorithm (ALG~3, optimal step gradient for Augmented Lagrangian).}
	\begin{algorithmic}[1]
		\Procedure{Uzawa03}{$\overline{y},q^0,R,K,itmax,tol$}
		\State Set~$err \leftarrow 1$ and~$k\leftarrow 0$.
		\State Define~$v^0\leftarrow \rho_0^{-2}M^*q^0$ and~$m^0\leftarrow \rho_R^{-2}q^0+Mv^0+\overline{y}$.
		\While{$k\leq itmax$ and~$err\geq tol$}
			\State Compute~$v^k$ and~$m^k$ solving the system
			\begin{align*}
  			 \left\{
			\begin{array}{cccl} \ds
			\dfrac{1}{K} \rho_0^2 v  & + & M^*m & = \dfrac{1}{K} M^*q^k ,
			\\ \noalign{\medskip} \ds
			-Mv & + & (1+K\rho_R^{-2}) m & = \overline{y} + \rho_R^{-2}q^k ,
		\end{array}
   		\right.
   		\end{align*}
			\State Compute the optimal step 	and update
			\begin{align*}
				r_k \leftarrow \text{argmin}_{r\in \mathbb{R}^+} \mathcal{J}_{R,K}^*(q^k-r m^k).
			\end{align*}
			\State Compute~$q^{k+1}$ as follows:
			\begin{align*}
				q^{k+1}\leftarrow q^k-r_k m^k.
			\end{align*}			
			\If{$\|q^{k+1}-q^k\|_{L^2(Q)}/\|q^k\|_{L^2(Q)}\leq tol$}
			\State Set~$q\leftarrow q^{k+1}$.
			\Else
			\State Compute~$err \leftarrow \|q^{k+1}-q^k\|_{L^2(Q)}/\|q^k\|_{L^2(Q)}$.
			\State Set~$k\leftarrow k+1$.
			\EndIf 			 
		\EndWhile
		\State Define~$y\leftarrow -\rho_R^{-2}q$ and~$v\leftarrow \rho_0^{-2}M^*q$.
		\EndProcedure 
	\end{algorithmic}
\end{algorithm} 

   Note that the linear system~\eqref{3.7pp} cannot be solved directly. For this reason, we shall perform Gauss-Seidel iterates to tackle this problem.
   
   In practice, in order to apply any of the previous algorithms, we must be able to compute (numerical approximations of) the~$Mv$ and the~$M^*q$ for various~$v$ and~$q$.
   
   In the numerical experiments in~Section~\ref{Section:numerical:experiments}, this is achieved through a standard finite dimensional reduction process that consists of
      
\begin{enumerate}

\item Implicit Euler or Gear time discretization.

\item Finite element approximation in space of the resulting Poisson-like problems.

\end{enumerate}


\section{Numerical null controllability of Stokes systems}
\label{Section:Stokes}

   The results in the previous sections can be adapted to the solution of the null controllability problem for the Stokes system. 
   
   Thus, let us introduce the spaces
   \[
\begin{array}{c}\ds
	\bm{H}:=\{\bm{\varphi}\in \bm{L}^2(\Omega)\,:\, \nabla \cdot \bm{\varphi} = 0 \text{ in }\Omega,\  \bm{\varphi}\cdot \bm{\nu} =0\text{ on }\partial \Omega \},
	\\ \noalign{\medskip} \ds
	\bm{V}:=\{\bm{\varphi}\in \bm{H}_0^1(\Omega)\,:\, \nabla \cdot \bm{\varphi} = 0 \text{ in }\Omega \},
	\\ \noalign{\smallskip} \ds
	U:=\{\psi\in H^1(\Omega)\,:\, \int_\Omega \psi(\bm{x})\, d\bm{x}=0\}.
\end{array}
   \]
%
%
%

   Let us fix~$\bm{y}^0\in \bm{H}$ and~$T>0$.
   For every $\bm{v}\in \bm{L}^2(Q_\omega)$, there exists exactly one solution~$(\bm{y},\pi)$ to
   \begin{align}
	\label{Stokes:system:01}
	\begin{cases}
		\pt \bm{y}-a \Delta \bm{y}+\nabla \pi=\mathbbm{1}_\omega \bm{v}&\text{ in }Q,\\
        \nabla \cdot \bm{y}=0&\text{ in }Q,\\
        \bm{y}=\bm{0}&\text{ on }\Sigma,\\
        \bm{y}(\cdot\,,0)=\bm{y}^0&\text{ in }\Omega,
    \end{cases}
   \end{align} 
with
   \begin{align*}
	\bm{y}\in C^0([0,T];\bm{H})\cap L^2(0,T;\bm{V}),\quad \pi \in L^2(0,T;\bm{U}).
   \end{align*}    

   The null controllability problem for the Stokes system~\eqref{Stokes:system:01} can be formulated as follows: for every~$\bm{y}^0\in \bm{H}$, find a control~$\bm{v}\in \bm{L}^2(Q_\omega)$ such that the associated solution to~\eqref{Stokes:system:01} satisfies 
   \begin{align*}
	\bm{y}(\bm{x},T)=\bm{0},\quad \bm{x}\in \Omega. 
   \end{align*}

   It is known that this problem is solvable, see~\cite{Fursikov1996Controllability}.
   Moreover, with the notation introduced in~Sections~\ref{Sec-1} and~\ref{Section:study:extremal:problem:trunctation}, we can consider the constrained extremal problem 
   \begin{align}
	\label{Opti:problem:Stokes}
	\begin{cases}
		\displaystyle \text{Minimize }J(\bm{y},\bm{v}):=\dfrac{1}{2}\iint_{Q}\rho_R^2 |\bm{y}|^2\,d\bm{x}\,dt + \dfrac{1}{2}\iint_{Q_\omega} \rho_0^2 |\bm{v}|^2\,d\bm{x}\,dt,\\
		\text{Subject to }(\bm{y},\bm{v})\in \mathcal{S}(\bm{y}^0,T),
	\end{cases}
   \end{align}
where
   \begin{align*}
	\mathcal{S}(\bm{y}^0,T):=\{(\bm{y},\bm{v})\in \bm{L}^2(Q) \times \mathcal{U}\,:\, \bm{y}\text{ solves }\eqref{Stokes:system:01}\text{ together with some }\pi\in L^2(0,T;\bm{U}) \}
   \end{align*}
and
   \begin{align*}
	\mathcal{U}:=\{\bm{v}\in \bm{L}^2(Q_\omega)\,:\, \rho_0v\in \bm{L}^2(Q_\omega)\}.
   \end{align*}

   Let~$(\bar{\bm{y}},\bar{\pi})$ be the solution to~\eqref{Stokes:system:01} with~$\bm{v}=\bm{0}$ and let~$\bm{M}:\bm{L}^2(Q_\omega) \to \bm{L}^2(0,T;\bm{H})$ be the linear operator that assigns to~$\bm{w}$ the velocity field~$\bm{u}$, where~$(\bm{u},\zeta)$ is the solution to the system
\begin{align*}
	\begin{cases}
		\pt \bm{u}-a \Delta \bm{u}+\nabla \zeta=\mathbbm{1}_\omega \bm{w}&\text{ in }Q,\\
        \nabla \cdot \bm{u}=0&\text{ in }Q,\\
        \bm{u}=\bm{0}&\text{ on }\Sigma,\\
        \bm{u}(\cdot\,,0)=\bm{0}&\text{ in }\Omega.
    \end{cases}
\end{align*}

   Note that the adjoint~$\bm{M}^*:\bm{L}^2(0,T;\bm{H})\to \bm{L}^2(Q_\omega)$ assigns to each~$\bm{\psi}$ the function~$\bm{\varphi}\big|_{Q_\omega}$, where the pair~$(\bm{\varphi},\sigma)$ satisfies
\begin{align*}
	\begin{cases}
		-\pt \bm{\varphi}-a\Delta \bm{\varphi}+\nabla \sigma =\bm{\psi}&\text{ in }Q,\\
		\nabla \cdot \bm{\varphi}=0&\text{ in }Q,\\
		\bm{\varphi}=\bm{0}&\text{ on }\Sigma,\\
		\bm{\varphi}(\cdot\,,T)=0&\text{ in }\Omega.
	\end{cases}
\end{align*}

%
%

   After some work, we see that the Lagrangian corresponding to~\eqref{Opti:problem:Stokes} is given by 
   \begin{align*}
\mathcal{L}_{R}(\bm{y},\bm{v},\bm{q}):=\dfrac{1}{2}\iint_{Q} \rho_R^2 |\bm{y}|^2\,d\bm{x}\,dt + \dfrac{1}{2}\iint_{Q_\omega} \rho_0^2 |\bm{v}|^2\,d\bm{x}\,dt - (\bm{q},\bm{M}\bm{v}+\bar{\bm{y}}-\bm{y})_{\bm{L}^2(Q)}
   \end{align*}
and, for any~$K > 0$, the Augmented Lagrangian is as follows:
   \begin{align*}
\mathcal{L}_{R,K}(\bm{y},\bm{v},\bm{q}):=\mathcal{L}_{R}(\bm{y},\bm{v},\bm{q}) + \dfrac{1}{2}\|\bm{M}\bm{v}+\bar{\bm{y}}-\bm{y}\|_{\bm{L}^2(Q)}^2 .
   \end{align*}

   Then, we can consider the extremal problem 
   \begin{align}\label{Stokes-AL-dual}
	\begin{cases}
		\text{Minimize }\mathcal{J}_{R,K}^*(\bm{q}),\\
		\text{Subject to }\bm{q}\in \bm{L}^2(Q),
	\end{cases}
   \end{align}
where
   \begin{align*}
	\mathcal{J}_{R,K}^*(\bm{q}):=-\min_{(\bm{y},\bm{v})\in \bm{L}^2(Q)\times \mathcal{U}} \mathcal{L}_{R,K}(\bm{y},\bm{v},\bm{q})\quad \forall \bm{q}\in \bm{L}^2(Q).
   \end{align*}
   
   The optimal step gradient method for~\eqref{Stokes-AL-dual}, that is, Uzawa's algorithm for the Augmented Lagrangian formulation, is denoted by~ALG~4.
   It is described below.
   
   Again, we must in practice be able to compute numerical approximations of the~$\bm{M}\bm{v}$ and the~$\bm{M}^*\bm{q}$.
   As in the case of the heat equation, this can be done in two steps:
   
\begin{enumerate}

\item By introducing Euler and Gear time discretization schemes that reduce the task to the solution of a finite family of stationary Stokes problems.

\item With a mixed finite element approximation of these Stokes systems.

\end{enumerate}



\begin{algorithm}[!h]
	\caption{Uzawa's algorithm (ALG~4, optimal step gradient for Stokes Augmented Lagrangian.}
	\label{ALG4}
	\begin{algorithmic}[1]
		\Procedure{Uzawa03}{$\overline{\bm{y}},\bm{q}^0,R,K,itmax,tol$}
		\State Set~$err \leftarrow 1$ and~$k\leftarrow 0$.
		\State Define~$\bm{v}^0\leftarrow \rho_0^{-2}M^*\bm{q}^0$ and~$\bm{m}^0\leftarrow \rho_R^{-2}\bm{q}^0+M\bm{v}^0+\overline{\bm{y}}$.
		\While{$k\leq itmax$ and~$err\geq tol$}
			\State Compute~$\bm{v}^k$ and~$\bm{m}^k$ solving the system
			\begin{align*}
  			 \left\{
			\begin{array}{cccl} \ds
			\dfrac{1}{K} \rho_0^2 \bm{v}  & + & M^*\bm{m} & = \dfrac{1}{K} M^*\bm{q}^k ,
			\\ \noalign{\medskip} \ds
			-M\bm{v} & + & (1+K\rho_R^{-2}) \bm{m} & = \overline{\bm{y}} + \rho_R^{-2}\bm{q}^k ,
		\end{array}
   		\right.
   		\end{align*}
			\State Compute the optimal step 	and update
			\begin{align*}
				r_k \leftarrow \text{argmin}_{r\in \mathbb{R}^+} \mathcal{J}_{R,K}^*(\bm{q}^k-r\bm{m}^k).
			\end{align*}
			\State Compute~$\bm{q}^{k+1}$ as follows:
			\begin{align*}
				\bm{q}^{k+1}\leftarrow \bm{q}^k-r_k\bm{m}^k.
			\end{align*}			
			\If{$\|\bm{q}^{k+1}-\bm{q}^k\|_{\bm{L}^2(Q)}/\|\bm{q}^k\|_{\bm{L}^2(Q)}\leq tol$}
			\State Set~$\bm{q}\leftarrow \bm{q}^{k+1}$.
			\Else
			\State Compute~$err \leftarrow \|\bm{q}^{k+1}-\bm{q}^k\|_{\bm{L}^2(Q)}/\|\bm{q}^k\|_{L^2(Q)}$.
			\State Set~$k\leftarrow k+1$.
			\EndIf 			 
		\EndWhile
		\State Define~$\bm{y}\leftarrow -\rho_R^{-2}\bm{q}$ and~$\bm{v}\leftarrow \rho_0^{-2}M^*\bm{q}$.
		\EndProcedure 
	\end{algorithmic}
\end{algorithm} 

\section{Numerical experiments}
\label{Section:numerical:experiments}

   In the sequel, we present the results of some numerical experiments in~$2$D in a rectangular domain and~$3$D in a cube. The computations have been performed with the \textit{FreeFem++} package (see \cite{Hecht2012New}). Moreover, the visualization of the results has been generated using appropriate MATLAB tools.
   
   In these experiments, we will focus on the behavior of the computed control and state. Besides, to validate the theoretical results of Propositions \ref{Proposition:convergence} and \ref{prop:estimate:y:T}, we also explore the behavior of the algorithm for various $R>0$. 

   First, let us suppose that~$\Omega=(0,1)\times (0,1)$ and~$T=0.5$ and let us consider the following system for the heat equation
\begin{align}
	\label{numerical:experiences:eq:01}
    \begin{cases}
        \pt y -0.1\Delta y -2.5 y=\mathbbm{1}_\omega v &\text{ in }Q,\\
        y=0&\text{ on }\Sigma,\\
        y(x_1,x_2,0)=\sin(\pi x_1)\sin(\pi x_2)&\text{ in }\Omega.
    \end{cases}
\end{align}

   Note that, in~\eqref{numerical:experiences:eq:01}, we have considered constant coefficients and a zero-order term with a negative sign.
   However, as indicated at the beginning of the paper, the ideas that follow can also be extended to non-constant coefficients, see for instance~\cite{FC2013Strong}.

   Let us analyze the uncontrolled solution to the problem.
   The norm~$\|y(\cdot\,,t)\|$ is increasing in time and, accordingly, the uncontrolled solution will not vanish at time~$t = T$.
   In~Figure~\ref{figure:projected:uncontrolled:state}, we depict the projections of the solution at~$x_1=0.35$ and~$x_2=0.4$.

   \begin{figure}[!h]
	\centering
	\begin{minipage}[b]{0.4\textwidth}
            \includegraphics[scale=0.332]{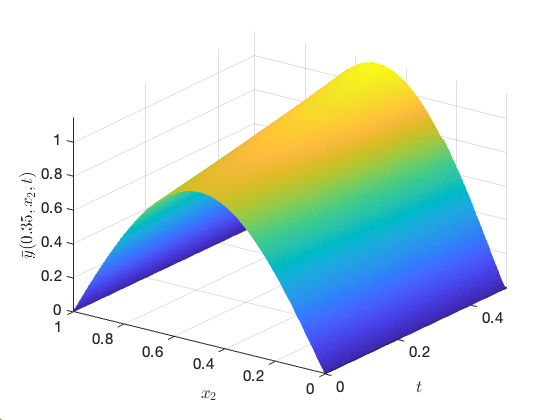}
    \end{minipage}
	\begin{minipage}[b]{0.4\textwidth}
            \includegraphics[scale=0.16]{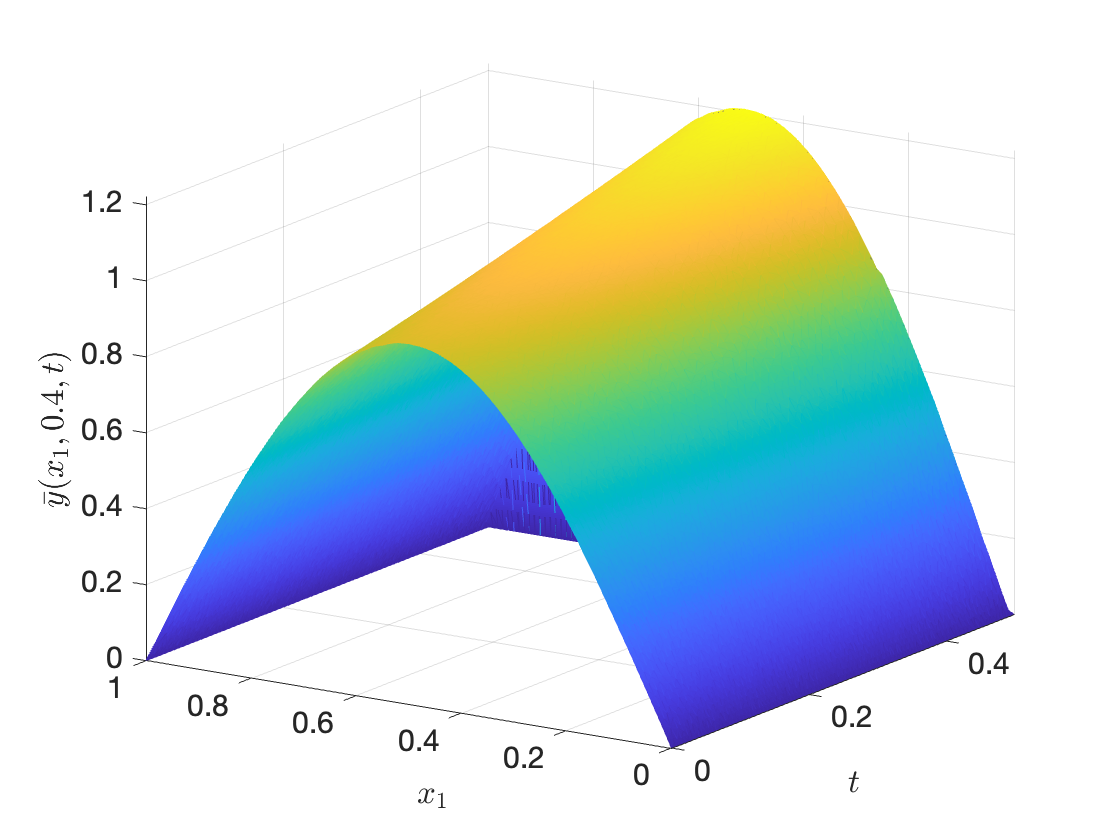}
    \end{minipage}
	\caption{Projected uncontrolled state at~$x_1=0.35$ and~$x_2=0.4$.}
	\label{figure:projected:uncontrolled:state}
   \end{figure}

   In order to apply our results, let us define the appropariate weight functions. Suppose that the control region can be written in the following way:
  ~$\omega = (a_1,b_1) \times (a_2,b_2)$, with~$a_i, b_i \in (0,1)$ and~$a_i < b_i$, for $i=1,2$.
   For~$s_i\in (0,1)$ with~$i=1,2$, we consider the following real-valued function~$\beta_{0,s_i}$:
   \begin{align*}
	\beta_{0,s_i}(x)=\dfrac{x(1-x)e^{-(x-c_{s_i})^2}}{s_i(1-s_i) e^{-(s_i-c_{s_i})^2}},\quad c_{s_i}:=s_i-\dfrac{1-2s_i}{2s_i(1-s_i)},\quad i=1,2.
   \end{align*}

   Then, we set
   \begin{align}
\label{numerical:eta:0}
\eta^0(x_1,x_2)=\beta_{0,s_1}(x_1)\beta_{0,s_2}(x_2) \quad \forall\, (x_1,x_2)\in \overline{\Omega}.
   \end{align}

   It is easy to check that~$\eta^0$ fulfills the Fursikov-Imanuvilov conditions in~\eqref{weight:conditions}.
   In the sequel, we define the weight functions~$\rho$ and~$\rho_0$ as in~\eqref{Carleman:weights}, with~$\eta^0$ given by~\eqref{numerical:eta:0}, $s_i=(b_i-a_i)/2$ for~$i=1,2$, $K_1=1/10$ and~$K_2=2\|\eta^0\|_{L^\infty}=2$. 

   Several numerical experiments concerning~\eqref{numerical:experiences:eq:01} using ALG~3 are presented in the following sections.
   In all of them, ALG~3 is initiated with~$q^0\equiv 0$ and the number of steps used for time discretization is denoted by~$N_t$.


\subsection{Test \#1: Controlling with~$\omega$ small enough}

   In this experiment, we consider the controlled heat equation where the action of the control is given in a small subset of the domain.
   
   More precisely, in this case the control function in~\eqref{numerical:experiences:eq:01} is supported in the set~$\omega=(0.2,0.5)\times (0.2,0.6)\subset \Omega$.
   Then, we use Uzawa's algorithm proposed in ALG~3 with~$R:=10^{5}$, $tol:=2\cdot 10^{-5}$, and~$K=0.1$.
   We use an initial mesh with 2358 triangles and take $N_t=80$.
   We adapt the mesh every 10 iterates according to the values of
   \begin{align*}
	\tilde{y}^{k}(x) :=
	\frac{1}{\Delta t} \sum_{n=0}^{N_t} y^{k}(x,t_n),\quad x\in \Omega.
   \end{align*}
where $t_n=n\Delta t$. In order to illustrate the evolution of the controlled solution, we have depicted the state~$y = y(x_1,x_2,t)$ for some selected times in~Figures~\ref{figure:evolution:controlled:state:01} and ~\ref{figure:evolution:controlled:state:02}.
   It is shown there that, as time evolves, the action of the control makes the computed solution locally negative;
   this is coherent with the {\it Maximum Principle} for the parabolic equation.
   Then, after a while, the solution tends rapidly to zero.

\begin{figure}[!h]
	\begin{minipage}[b]{0.5\textwidth}
	\centering
            \includegraphics[scale=0.30]{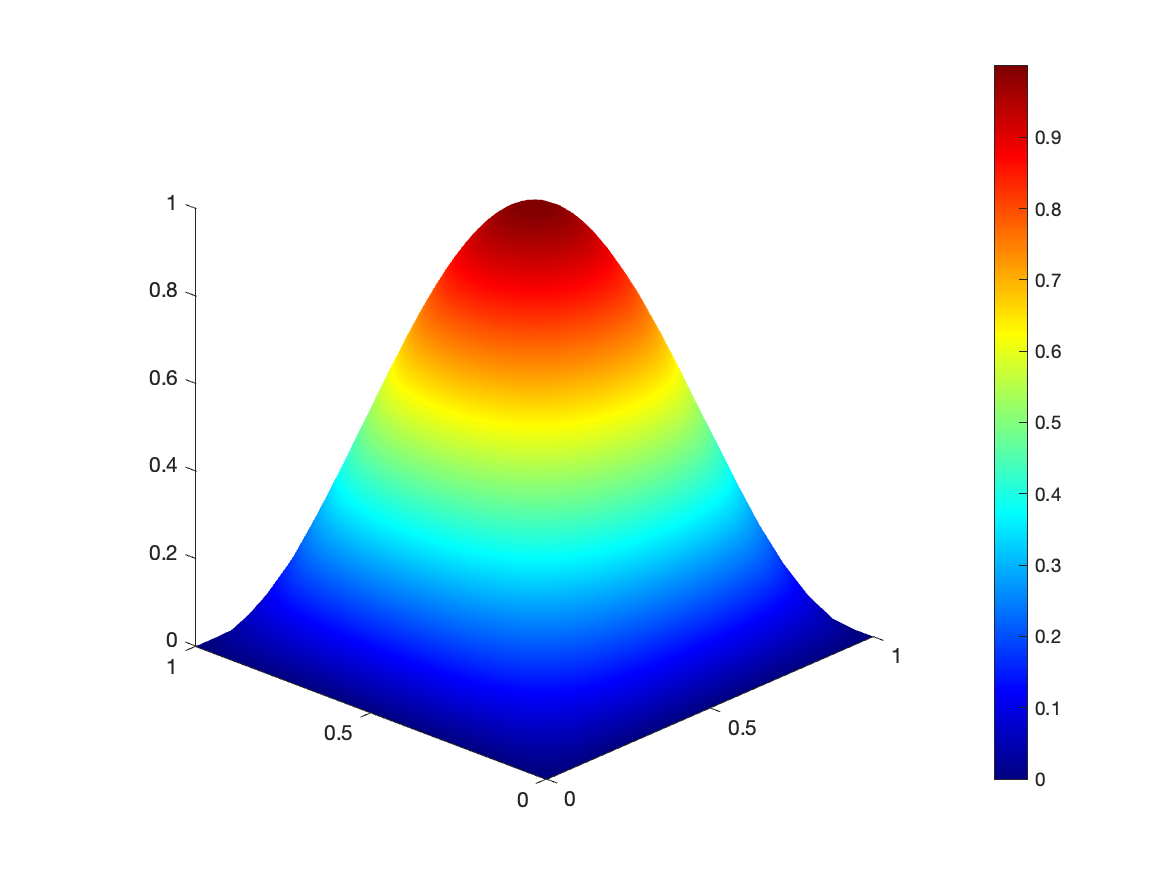}
    \end{minipage}
	\hfill 
    \begin{minipage}[b]{0.50\textwidth}
    	\centering
        \includegraphics[scale=0.30]{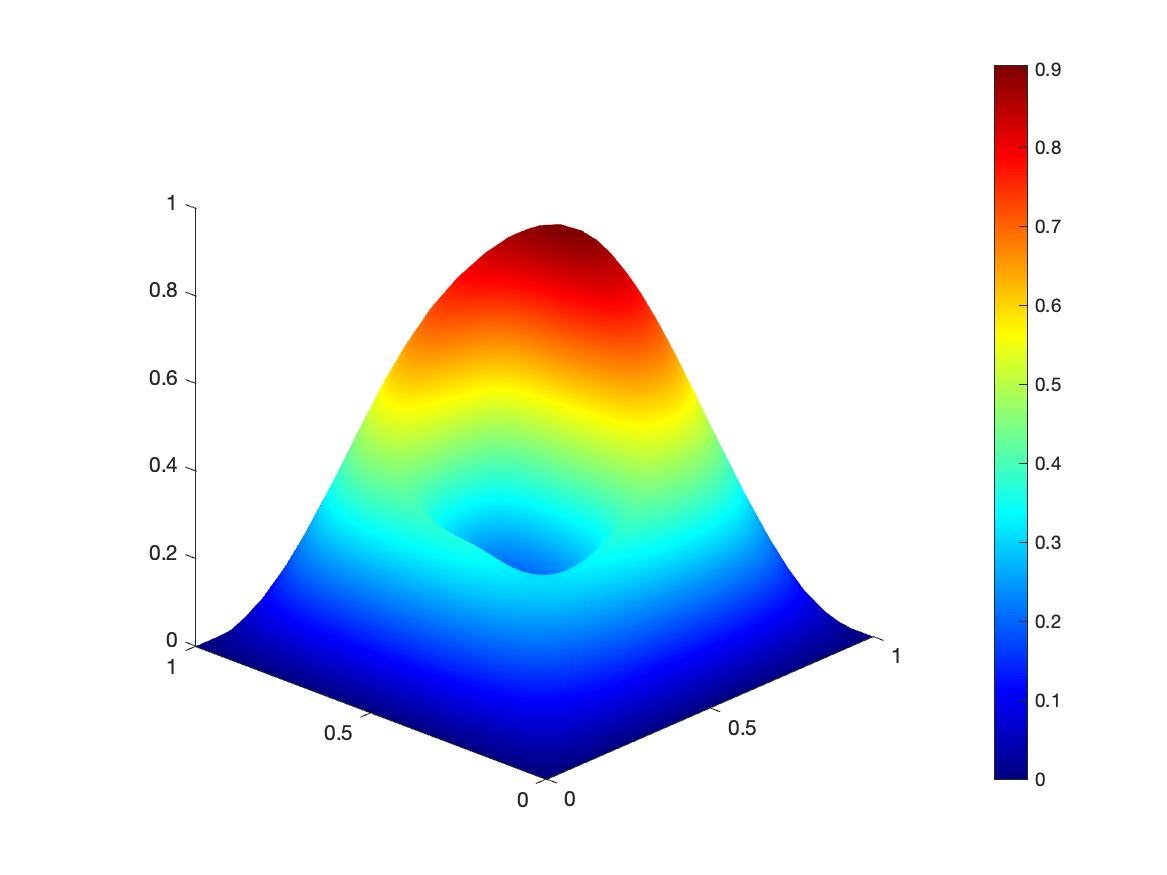}
    \end{minipage}
\vspace{1em}
        \begin{minipage}[b]{0.50\textwidth}
        	\centering
            \includegraphics[scale=0.30]{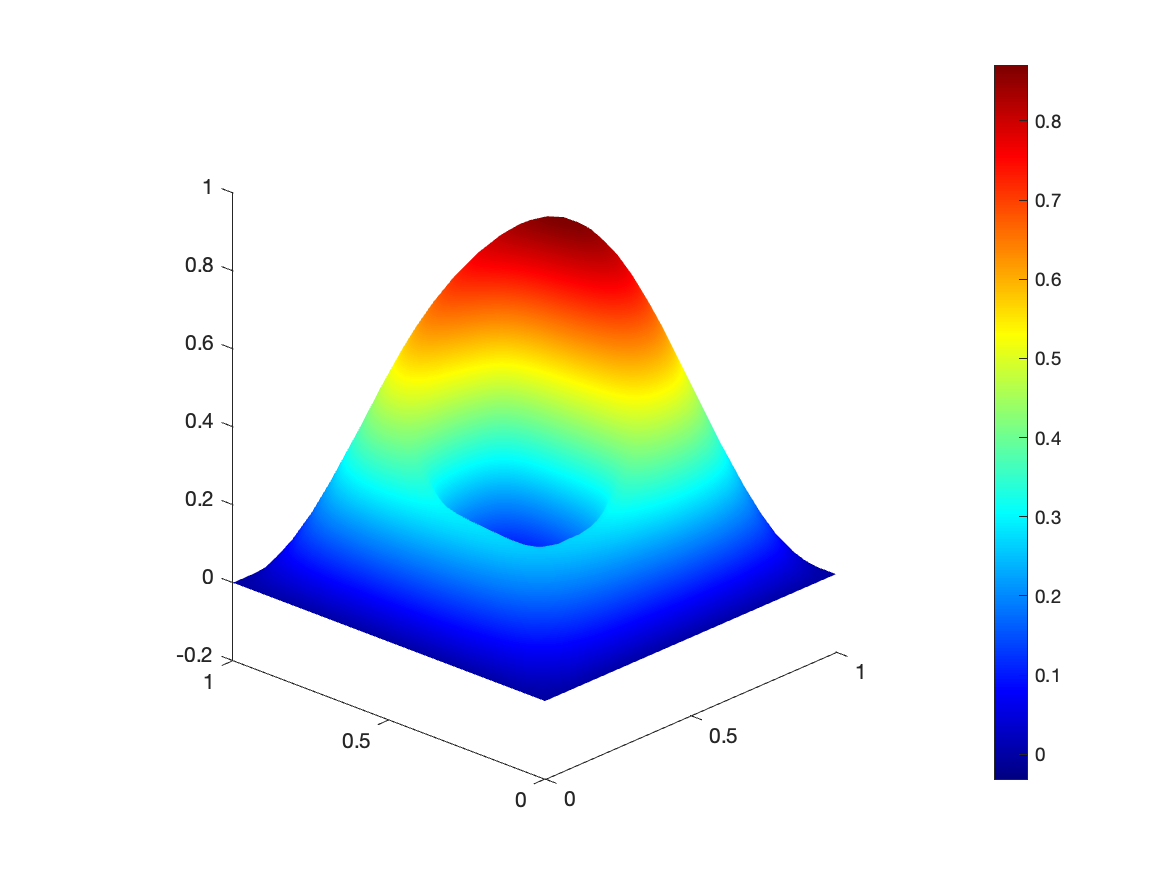}
        \end{minipage}
        \hfill
        \begin{minipage}[b]{0.50\textwidth}
        	\centering
            \includegraphics[scale=0.30]{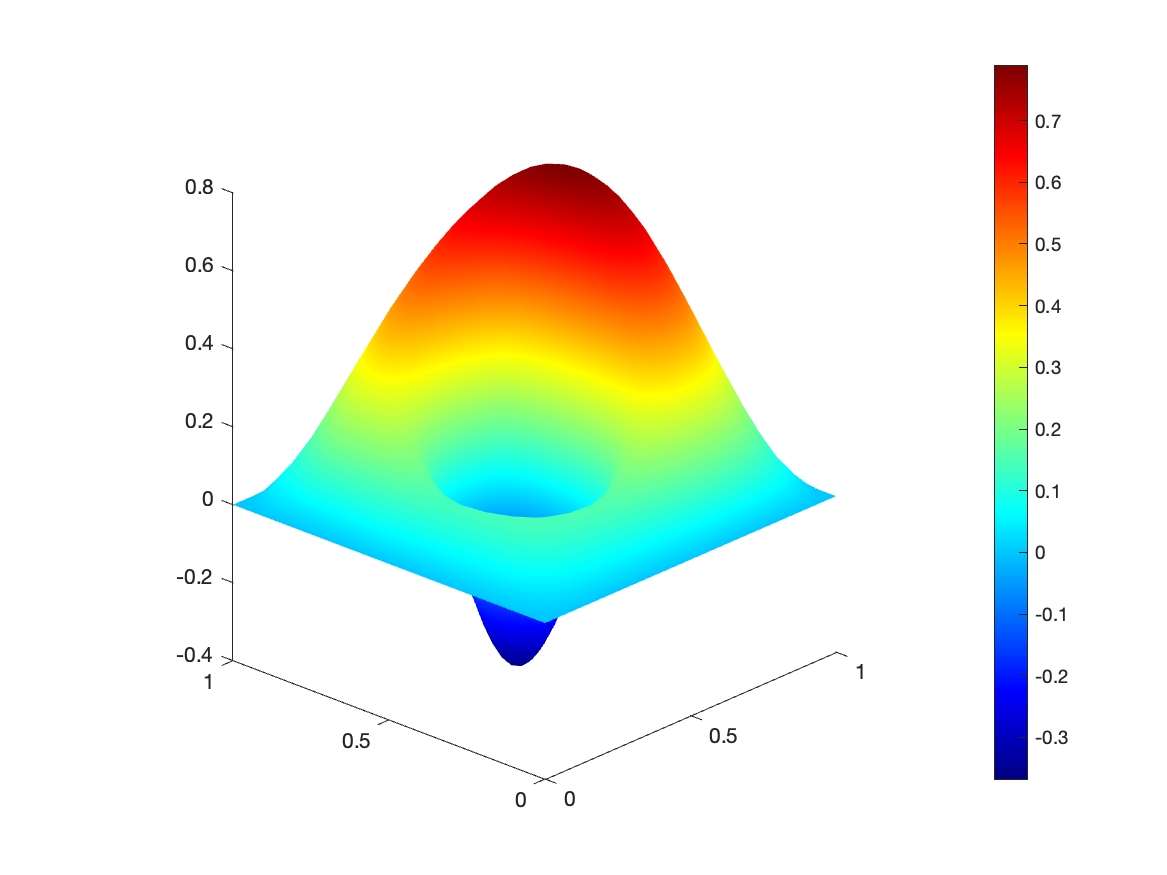}
        \end{minipage}
        \vskip-.5cm
        \caption{Evolution of the controlled state at~$t=0,\, 0.0625,\, 0.0875$ and~$0.15$ (From left to right).}
        \label{figure:evolution:controlled:state:01}
\end{figure}

\begin{figure}[!h]
	\begin{minipage}[b]{0.50\textwidth}
            \centering 
            \includegraphics[scale=0.30]{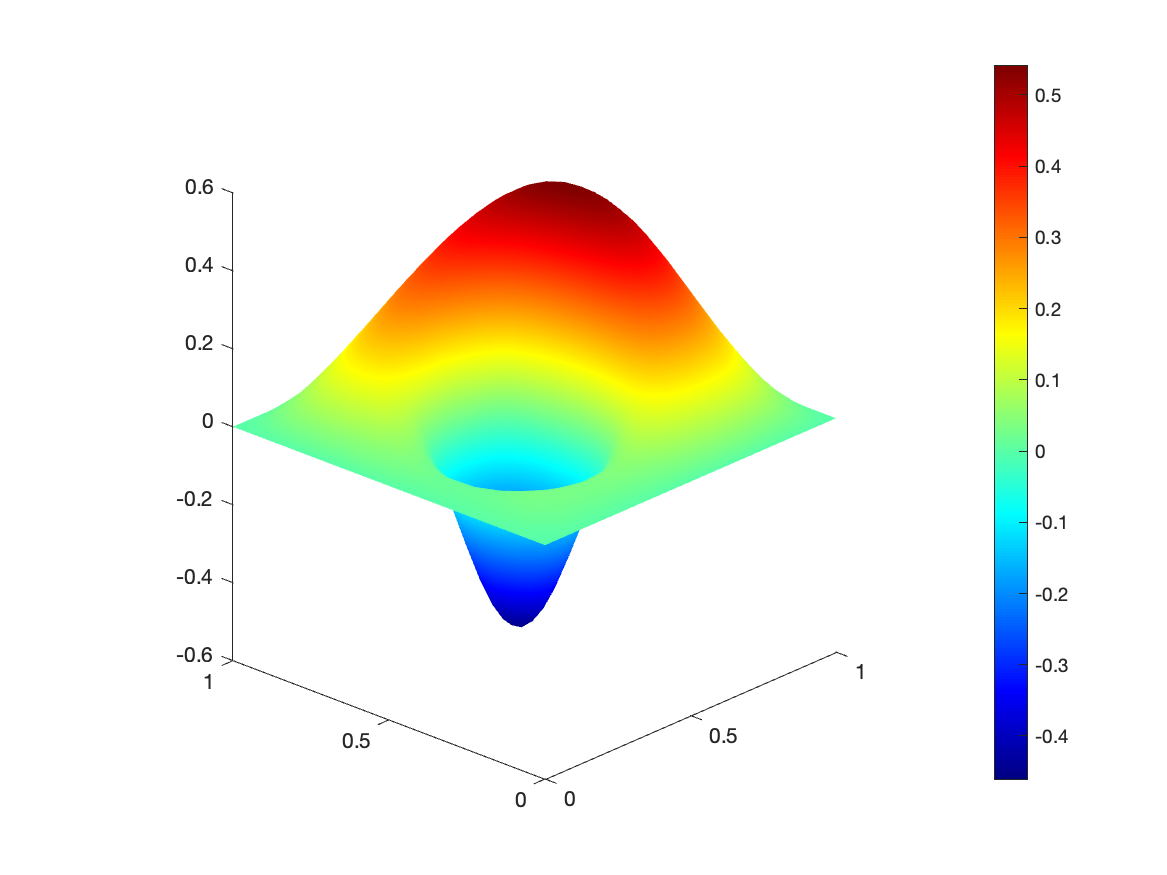}
    \end{minipage}
    \hfill
    \hfill
    \begin{minipage}[b]{0.50\textwidth}
        \centering 
        \includegraphics[scale=0.30]{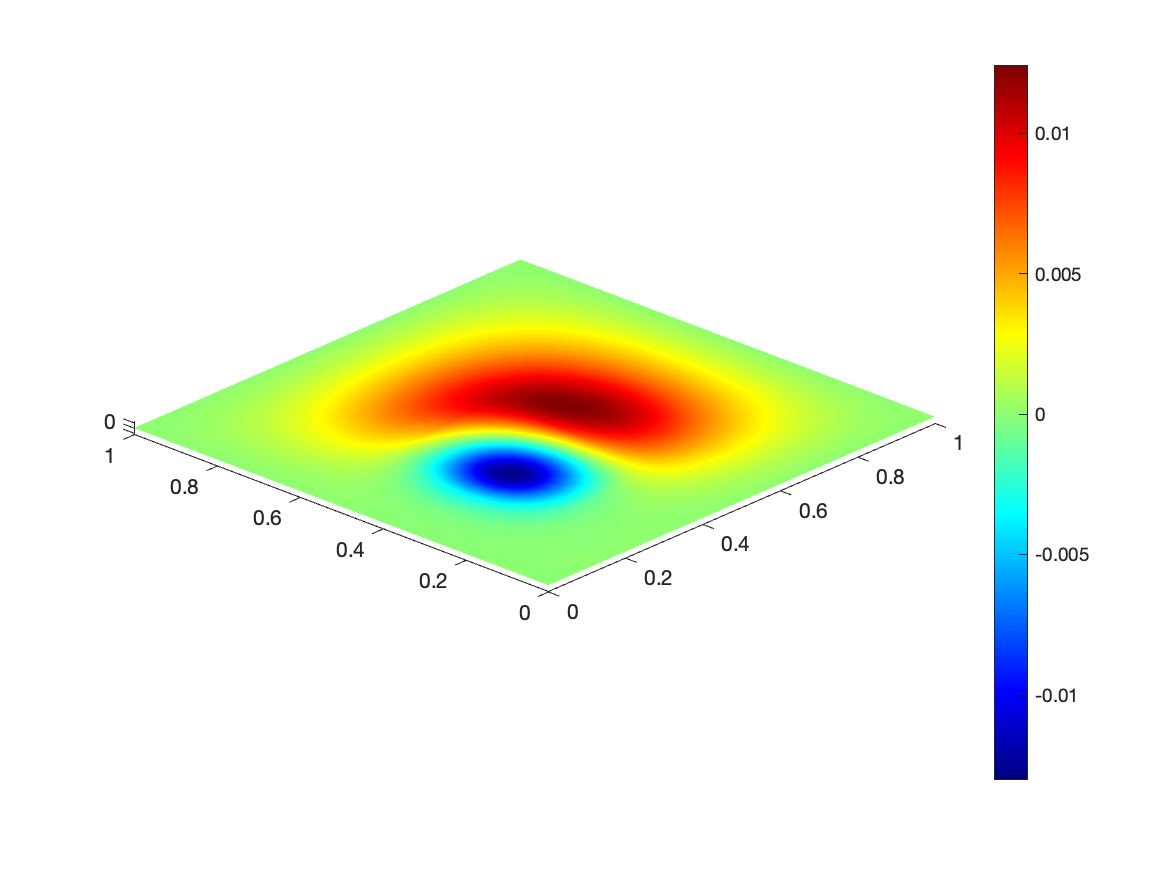}
    \end{minipage}
\vspace{1em}
        \begin{minipage}[b]{0.50\textwidth}
            \centering 
            \includegraphics[scale=0.30]{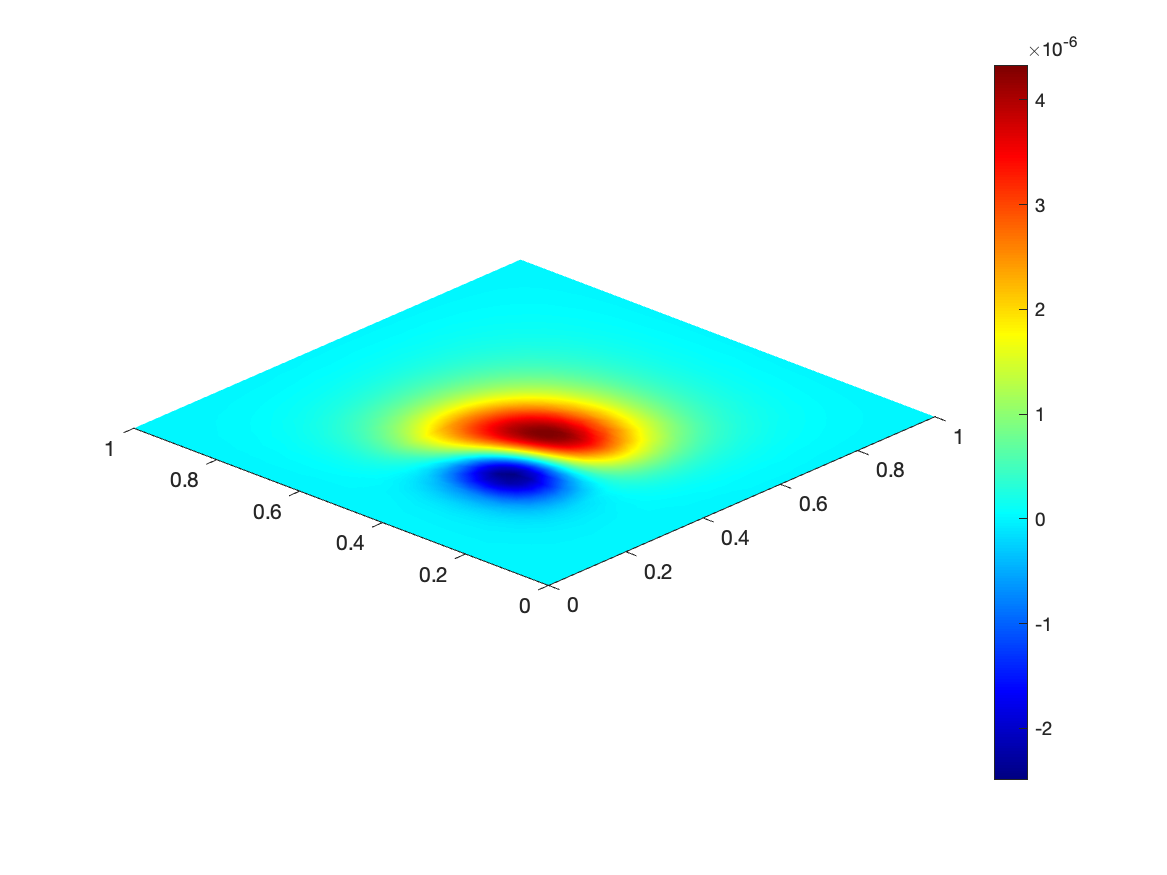}
        \end{minipage}
        \hfill
        \begin{minipage}[b]{0.50\textwidth}
            \centering 
            \includegraphics[scale=0.30]{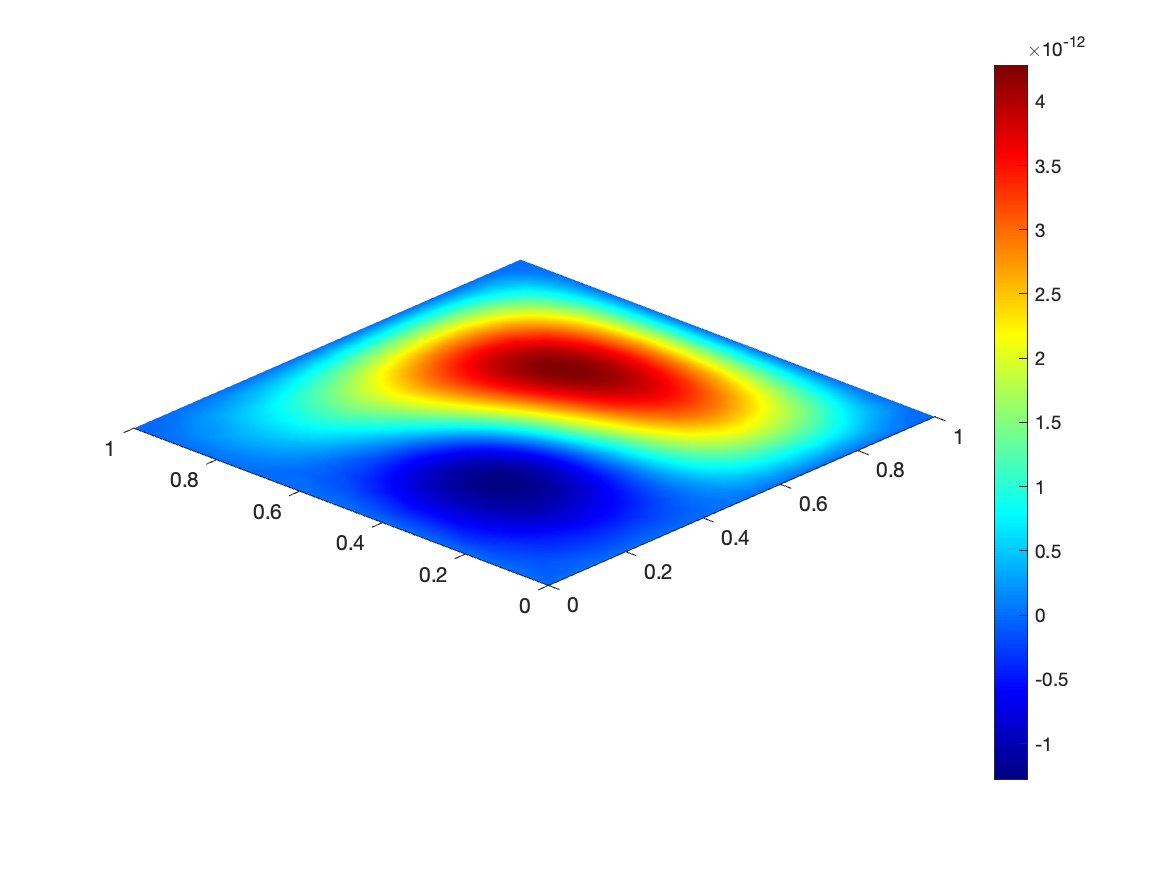}
        \end{minipage}
        \vskip-.5cm
        \caption{Evolution of the controlled state at~$t = 0.25,\, 0.375,\, 0.4375$ and~$0.5$ (from left to right).}
        \label{figure:evolution:controlled:state:02}
\end{figure}

On the other hand, the projections of the control computed by ALG~3 at~$x_1 = 0.35$ and then at~$x_2 = 0.3$ are exhibited in~Figure~\ref{figure:projected:control}.
   We observe that, in~Figure~\ref{figure:projected:control}, the control takes positive values for some values of~$t$ close to the final time.
   Again, this is coherent with the {\it Maximum Principle} for the heat equation.

   \begin{figure}[!h]
	\centering
	\begin{minipage}[b]{0.40\textwidth}
            \includegraphics[scale=0.19]{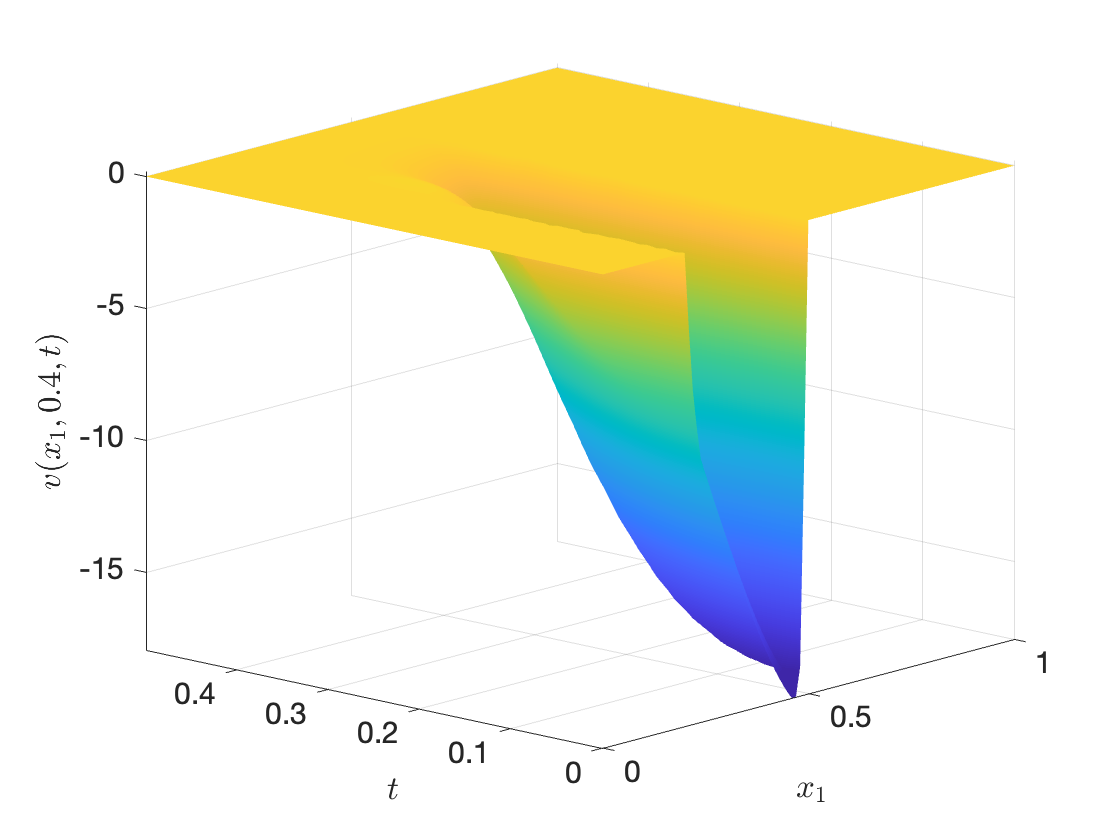}
    \end{minipage}
	\begin{minipage}[b]{0.40\textwidth}
            \includegraphics[scale=0.19]{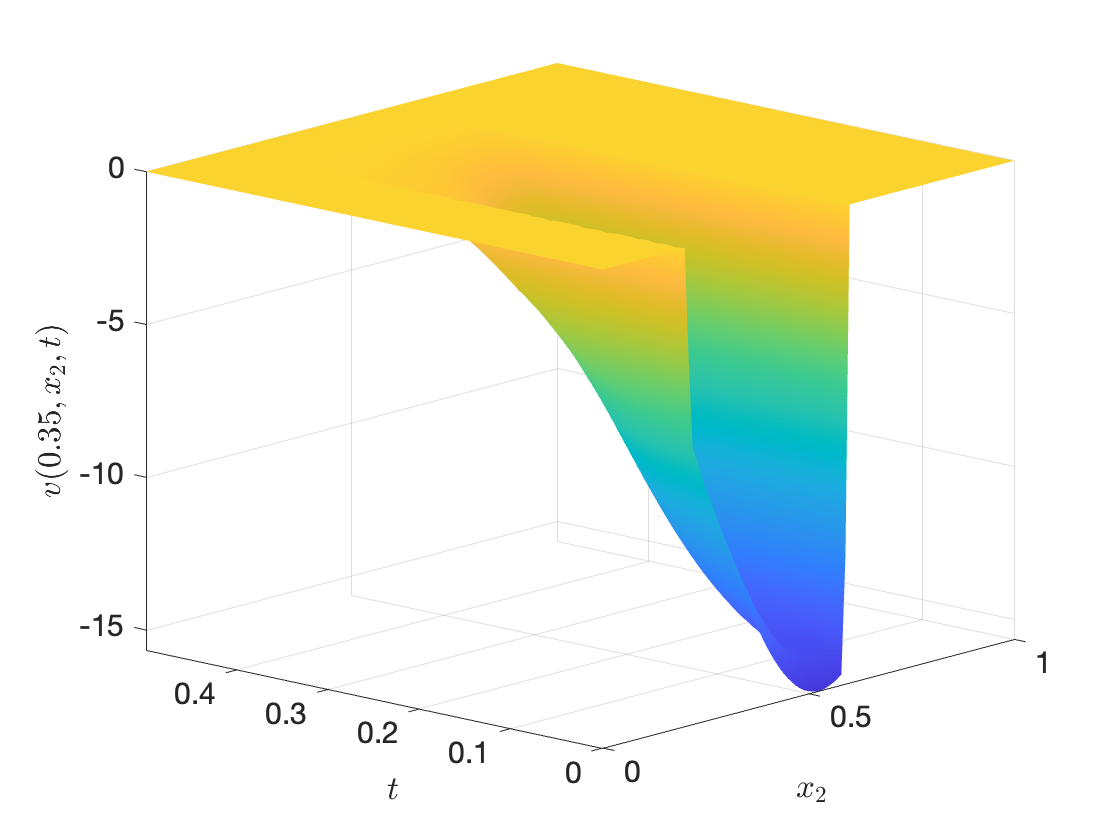}
    \end{minipage}
    \vskip-.5cm
	\caption{Projected uncontrolled state at~$x_1=0.35$ (left) and~$x_2=0.4$ (right).}
	\label{figure:projected:control}
  \end{figure}


   The norms of~$y$ and~$q$ at final time~$T$ are given by 
\begin{align*}
	\|y(\cdot\,,T)\|_{L^2(\Omega)}=1.71997\cdot 10^{-12},\quad \|q(\cdot\,,T)\|_{L^2(\Omega)}=35.0714.
\end{align*}
   For the evolution of the spatial norms of the uncontrolled and controlled states, the control and the computed~$q$, see~Figures~\ref{Figure:norm:ybar:y} and~\ref{Figure:norm:computed_q}.
   Note that the norm of the controlled state decays exponentially as~$t\to T^-$.
   
   The initial and final meshes obtained by the adapt mesh procedure used for the computations can be seen in~Figure~\ref{Figure:Initial:final:mesh}. Observe that they contain~$2358$ and~$1899$ triangles, respectively.

\begin{figure}[!h]
	\begin{minipage}[b]{0.5\textwidth}
		\centering 
		\includegraphics[scale=0.22]{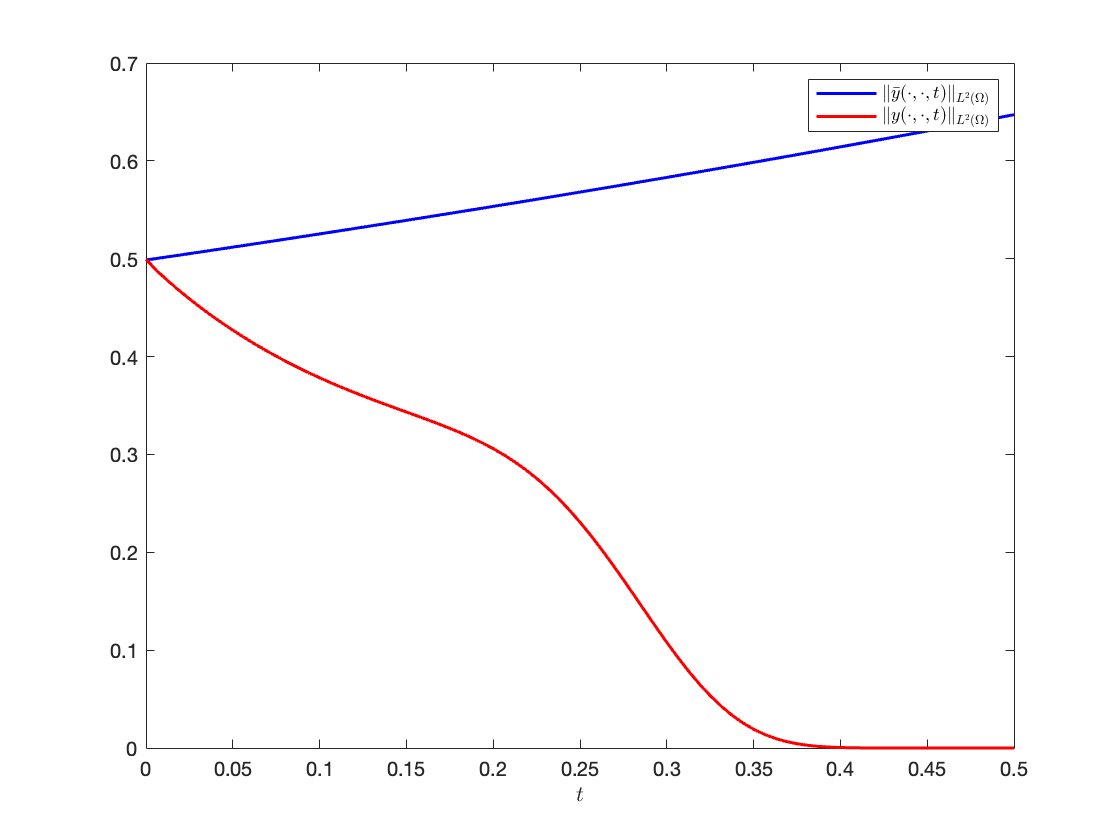}
	\end{minipage}
	\hfill 
	\begin{minipage}[b]{0.5\textwidth}
		\centering 
		\includegraphics[scale=0.22]{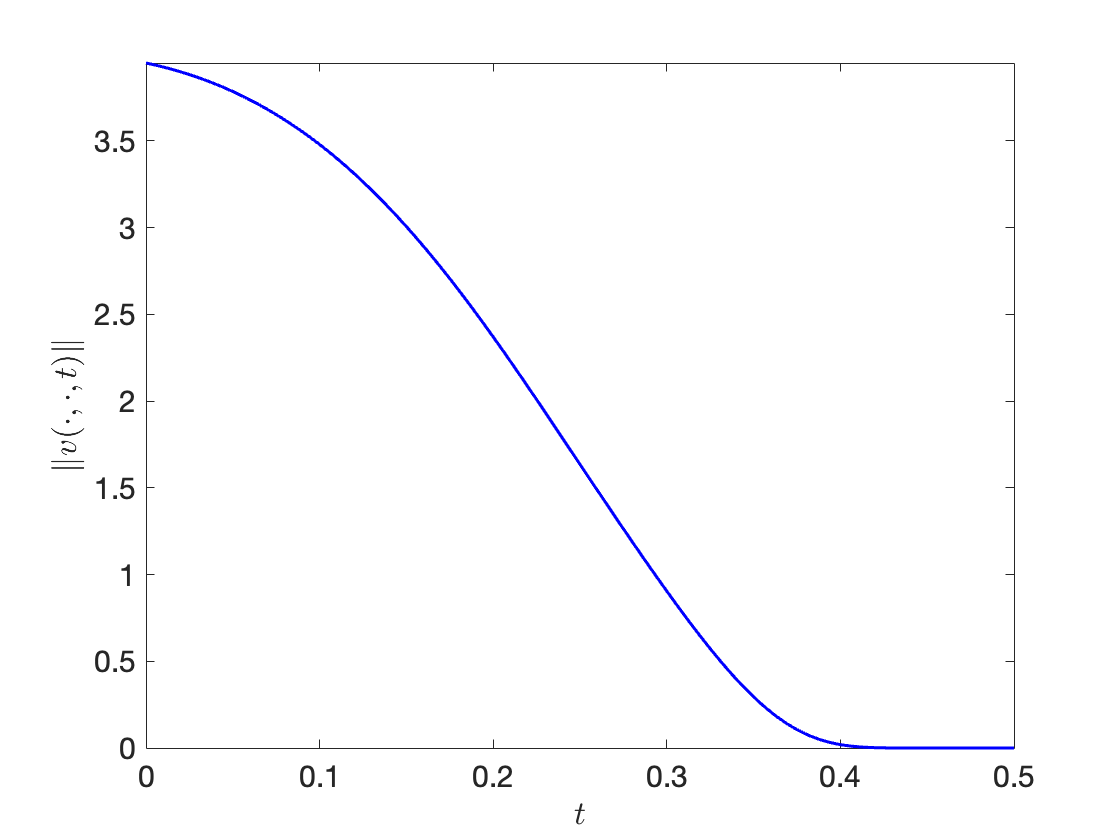}	
	\end{minipage}
	\vskip-.5cm
	\caption{The evolution of~$\|\bar{y}(\cdot\,,t)\|_{L^2}$ and~$\|y(\cdot\,,t)\|_{L^2}$ (left) and the evolution of~$\| v(\cdot\,,t) \|_{L^2(\om)}$ (right) as~$t \in [0,T]$.}
	\label{Figure:norm:ybar:y}
\end{figure}

\begin{figure}[!h]
	\centering 
	\includegraphics[scale=0.22]{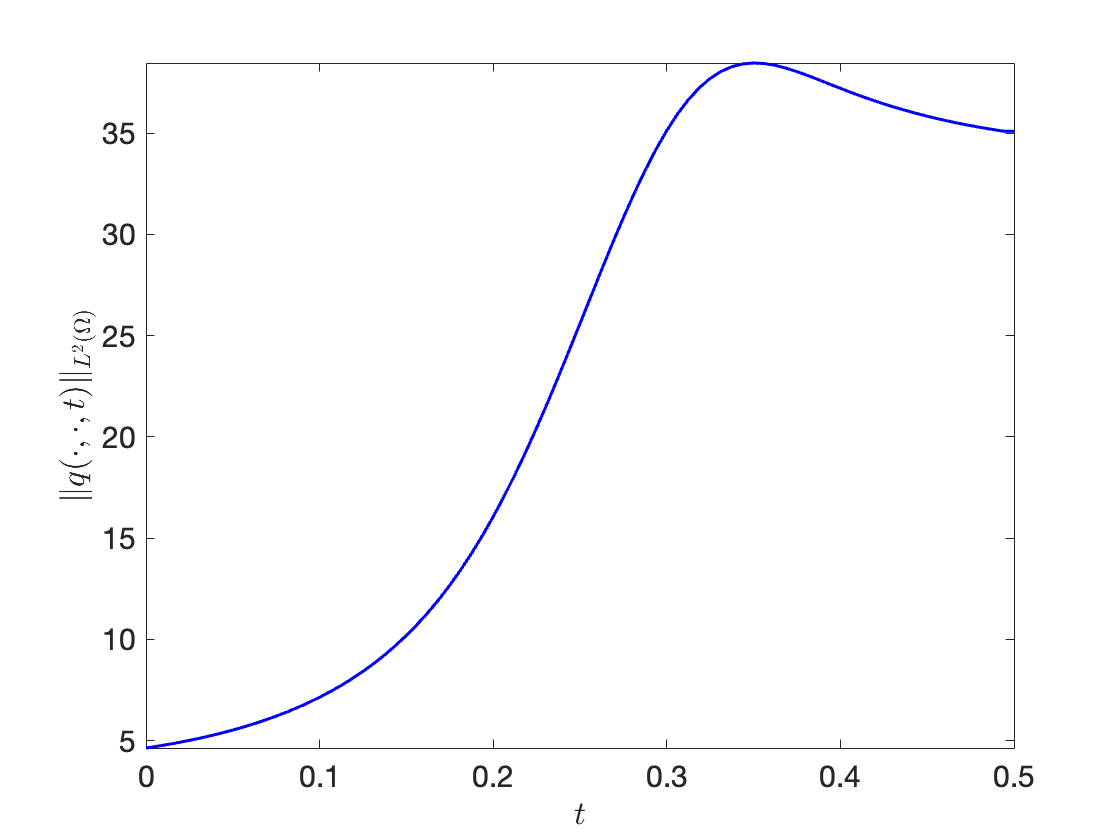}
	\vskip-.5cm
	\caption{The evolution of~$\|q(\cdot\,,t)\|_{L^2}$ with~$t\in [0,T]$.}
	\label{Figure:norm:computed_q}
\end{figure}

\begin{figure}[!h]
	\centering 
	\begin{minipage}[b]{0.30\textwidth}
            \includegraphics[scale=0.40]{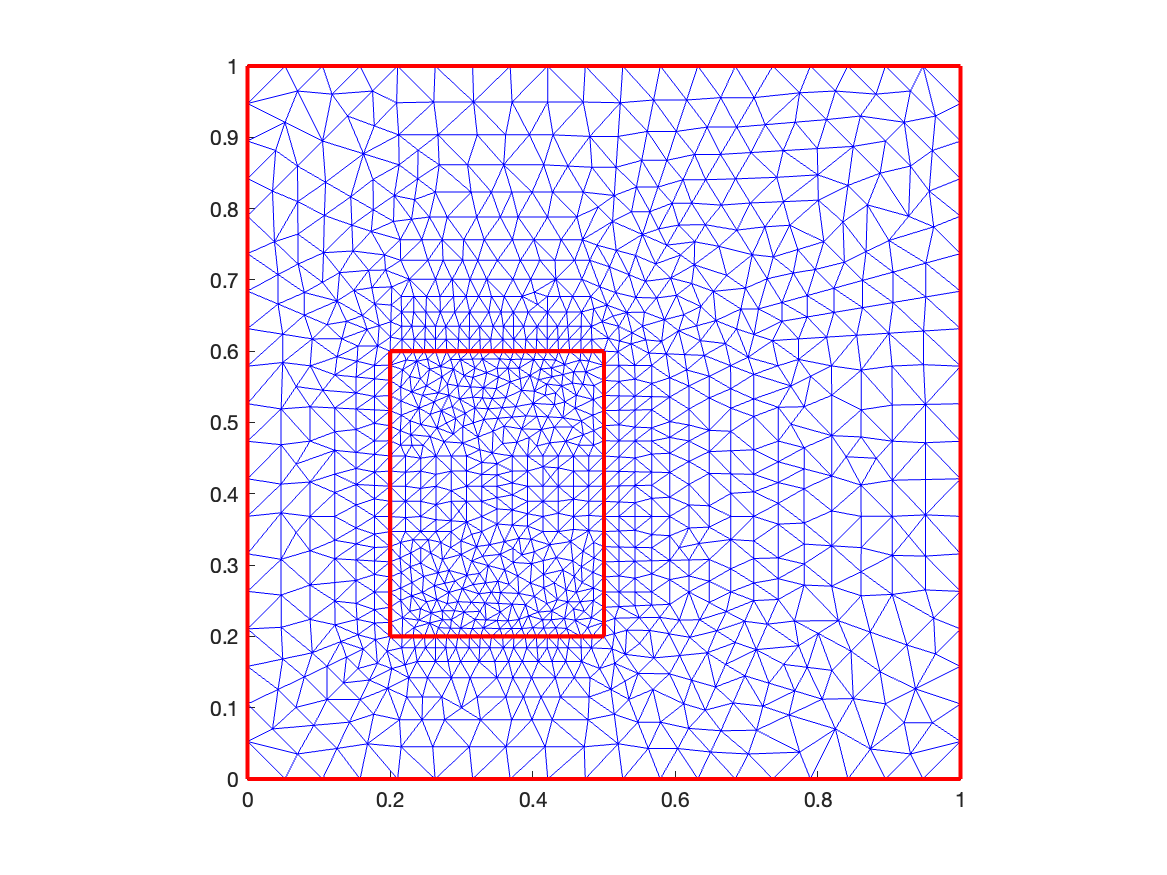}
    \end{minipage}
    \hfill
    \begin{minipage}[b]{0.5\textwidth}
            \includegraphics[scale=0.40]{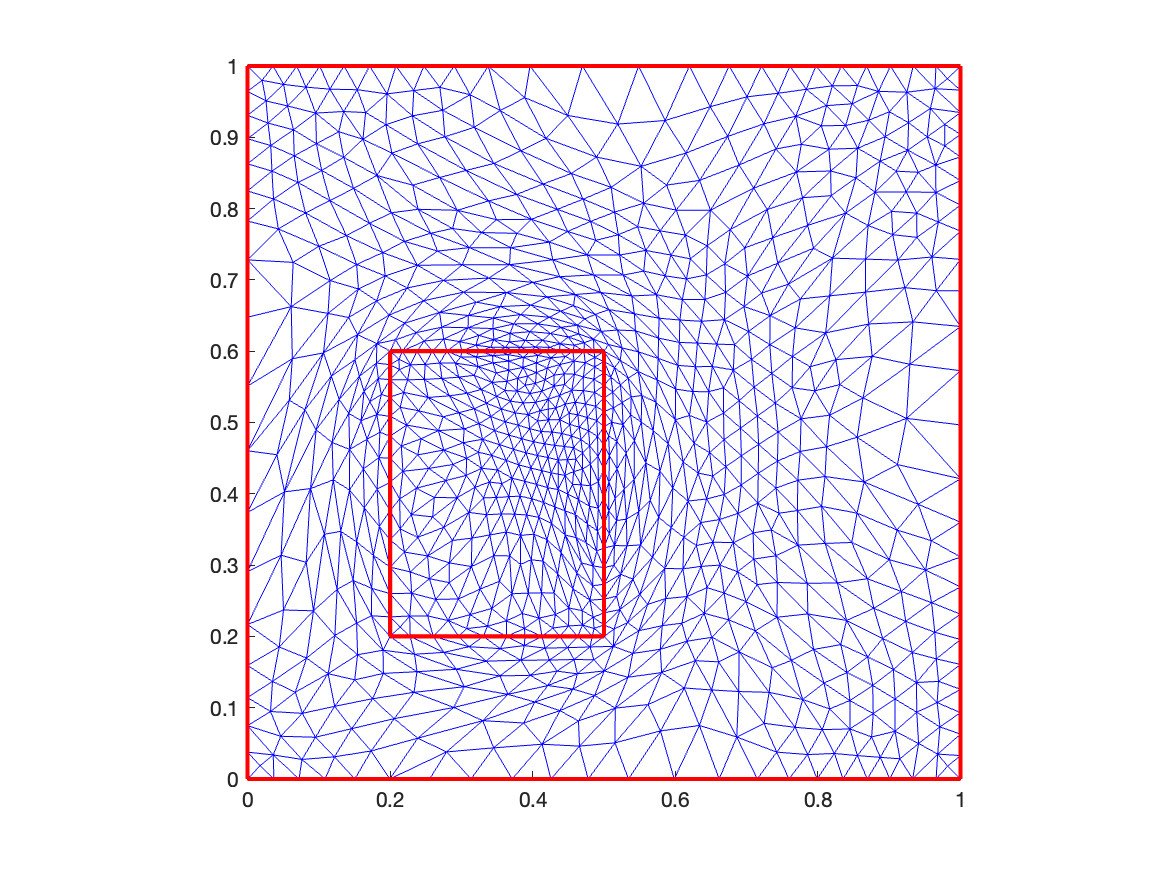}
    \end{minipage}
    \vskip-.5cm
    \caption{The initial(left) and final(right) meshes.}
    \label{Figure:Initial:final:mesh}
\end{figure}


\subsection{Test \#2: The behavior of the state and the control as~$R\to\infty$ for ALG~3}

   Let us fix~$\omega=(0.2,0.8) \times (0.2,0.8)$, $T=0.5$ and~$N_t=20$ and let us consider the corresponding system \eqref{numerical:experiences:eq:01}.
   We report the performance obtained by~ALG~3 with~$K=0.1$ by using different values of~$R$ and adapting the mesh each~$10$ iterates.

   In Table~\ref{Table:R:Euler:ALG3}, we give details on the behavior of the norms of~$y$, $v$, $q$, $y(\cdot\,,T)$ and~$q(\cdot\,,T)$ with respect to~$R$.
   We observe that~$R$ has a weak influence on the~$L^2$-norms of~$y$ and~$v$.
   This confirms that, as~$R$ increases, we get the uniform convergence of the state and the control.
   The same happens to the~$L^2$-norm of~$q$ and the norm in~$L^2(\Omega)$ of~$q(\cdot\,,T)$.

   On the other hand, the norm of~$\|y(\cdot\,,T)\|_{L^2}$ strongly depends on~$R$ and we see that it tends to 0 as~$R\to \infty$ in practice proportionally to $1/R$.    

\begin{table}[!h]
\centering 
\begin{tabular}{| c || c| c| c|c|c|} \hline 
$R$ &~$\|y\|_{L^2(Q)}$ &~$\|v\|_{L^2(Q_\omega)}$ &~$\|q\|_{L^2(Q)}$ &~$\|y(\cdot\,,T)\|_{L^2(\Omega)}$ &~$\|q(\cdot\,,T)\|_{L^2(\Omega)}$ \\ \hline \hline
$10^3$ &~$0.155245$ &~$1.47251$ &~$24.1339$ &~$1.33195\times 10^{-9}$ &~$8.55816$ \\
$10^4$ &~$0.155226$ &~$1.47274$ &~$24.1122$ &~$1.32407\times 10^{-11}$ &~$8.52156$ \\
$10^5$ &~$0.155262$ &~$1.47354$ &~$24.1811$ &~$1.319610^{-13}$ &~$8.62848$ \\
$10^6$ &~$0.155224$ &~$1.47355$ &~$24.1381$ &~$1.31352\times 10^{-15}$ &~$8.57142$ \\
$10^7$ &~$0.155211$ &~$1.47368$ &~$24.138$ &~$1.4735\times 10^{-17}$ &~$8.60645$ \\
$10^8$ &~$0.155186$ &~$1.47416$ &~$24.14$ &~$3.40873\times 10^{-19}$ &~$8.60533$ \\ \hline
\end{tabular}
\caption{The values of the norms of~$y$, $v$, $q$, $y(\cdot\,,T)$ and~$q(\cdot\,,T)$ as~$R$ increases.}
\label{Table:R:Euler:ALG3}
\end{table}

   The number of iterates needed to achieve
   \begin{align}
	\label{stopping:criteria}
	\dfrac{\|q^{k+1}-q^k\|_{L^2(Q)}}{\|q^k\|_{L^2(Q)}}\leq tol:=1\cdot 10^{-5}
   \end{align}
and the numbers of vertices and triangles in the final meshes are furnished in~Table~\ref{Table:R:iterates:mesh}.

\begin{table}[!h]
\centering 
\begin{tabular}{| c || c| c|c|} \hline
$R$ &~$\#$ iterates &~$\#$ vertices &~$\#$ triangles \\ \hline \hline 
$10^3$ &~$3035$ &~$1146$ &~$2154$\\ \hline 
$10^4$ &~$3037$ &~$1148$ &~$2159$\\ \hline 
$10^5$ &~$3028$ &~$1151$ &~$2166$\\ \hline 
$10^6$ &~$3030$ &~$1149$ &~$2160$\\ \hline 
$10^7$ &~$3026$ &~$1155$ &~$2174$\\ \hline 
$10^8$ &~$3025$ &~$1149$ &~$2161$\\ \hline 
\end{tabular}
\caption{The number of iterates, vertices, and needed triangles as $R$ increases.}
\label{Table:R:iterates:mesh}
\end{table}

   We observe that these values remain stable as~$R$ increases.
   This indicates that ALG~3 behaves robustly ``with respect to truncation''.
   Moreover, we observe that the huge number of iterates to achieve~\eqref{stopping:criteria} is similar to those obtained by the conjugate gradient method applied in~\cite{EFC2014Numerical} due to the lack of uniform coercivity of the dual problem.

   Tables~\ref{Table:R:Euler:ALG3} and~\ref{Table:R:iterates:mesh} suggest that it is not necessary to take~$R \to +\infty$ to achieve a good approximation of the controls.
   In fact, thanks to the use of the weight functions~$\rho_R$ and~$\rho_0$, the norms of the computed controls and controlled states change only slightly concerning these parameters.
   Moreover, in contrast to the case of conjugate gradient algorithm, we do not have a significative increase of iterates when~$R$ increases.


\subsection{Test \#3: Influence of the weights}



   Let us take~$\omega=(0.2,0.8)\times (0.2,0.8)$ and~$T=0.5$ and consider again the controlled problem~\eqref{numerical:experiences:eq:01}.
   We apply again ALG~3 with $K=0.1$, $R=10^5$, $q^0\equiv 0$, adapting the spatial mesh every 10 iterates. In this case, the stopping criteria is as \eqref{stopping:criteria} with $tol=10^{-4}$. 

   In~Tables~\ref{Table:Influence:Nt:01} and~\ref{Table:Influence:Nt:02}, the behavior of the algorithm is shown for various values of~$N_t$.

\begin{table}[!h]
	\centering
	\begin{tabular}{|c|| c| c| c|c|c|} \hline 
		$N_{t}$ &~$\|y\|_{L^2(Q)}$ &~$\|v\|_{L^2(Q_\omega)}$ &~$\|q\|_{L^2(Q)}$ &~$\|y(\cdot\,,T)\|_{L^2}$ &~$\|q(\cdot\,,T)\|_{L^2}$\\ \hline \hline 
		20 & 0.156718 & 1.17555 & 5.3525 &~$2.63373\cdot 10^{-13}$ & 6.18708\\ \hline  
		40 & 0.150623 & 1.1082 & 4.29809 &~$2.78415\cdot 10^{-13}$ & 5.84151\\ \hline 
		60 & 0.147944 & 1.08737 & 3.97457 &~$2.96496\cdot 10^{-13}$ & 5.7084\\ \hline
		80 &   0.146301 & 1.07509 & 3.81718 &~$3.14825\cdot 10^{-13}$ & 5.6242\\ \hline 
		100 & 0.145286 & 1.06806 & 3.73284 &~$3.28547\cdot 10^{-13}$ & 5.5786\\ \hline 
		120 & 0.144542 & 1.06397 & 3.68143 &~$3.36813\cdot 10^{-13}$ & 5.55647\\ \hline 
		160 & 0.143917 & 1.06091 & 3.63826 &~$3.38874\cdot 10^{-13}$ & 5.52634\\ \hline 
		200 & 0.142921 & 1.05326 & 3.57356 &~$3.61723\cdot 10^{-13}$ & 5.48535\\ \hline 
	\end{tabular}
	\caption{The norms of~$y$, $v$, $q$, $y(\cdot\,,T)$ and~$q(\cdot\,,T)$ as~$N_t$ increases.}
	\label{Table:Influence:Nt:01}
\end{table}

\begin{table}[!h]
	\centering 
	\begin{tabular}{|c|| c|c|c|} \hline 
		$N_{t}$ & \#iterates & \#vertices & \#triangles\\ \hline 
		20 & 462 & 887 & 1657 \\ \hline 
		40 & 379 & 861 & 1608 \\ \hline
		60 & 342 & 840 & 1569 \\ \hline 
		80 & 321 & 827 & 1544 \\ \hline 
		100 & 309 & 813 & 1518 \\ \hline 
		120 & 301 & 803 & 1497 \\ \hline 
		160 & 294 & 801 & 1494 \\ \hline 
		200 & 284 & 792 & 1478 \\ \hline  
	\end{tabular}
	\caption{The number of iterates, vertices, and triangles of the final mesh when~$N_t$ increases.}
	\label{Table:Influence:Nt:02}
\end{table}

   In particular, the uniform convergence of the control and the controlled state is clear.
   Moreover, the control functions approximate satisfactorily the null controllability requirement, since in each case~$\|y(\cdot\,,T)\|_{L^2}\sim 10^{-13}$.
   The number of iterates is slightly reduced when~$N_t$ increases and the final mesh obtained in each case remains without significative changes.

   To end the experiments for the 2D heat equation using ALG3, we study the evolution of the $\log_{10}$ scale of the relative error to achieve the condition
\begin{align*}
	\dfrac{\|q^{k+1}-q^k\|_{L^2(Q)}}{\|q^k\|_{L^2(Q)}}\leq 10^{-6}. 
\end{align*}

This is displayed in the Figure \ref{graph:log:error}.

\begin{figure}[!h]
	\centering 
	\includegraphics[scale=0.25]{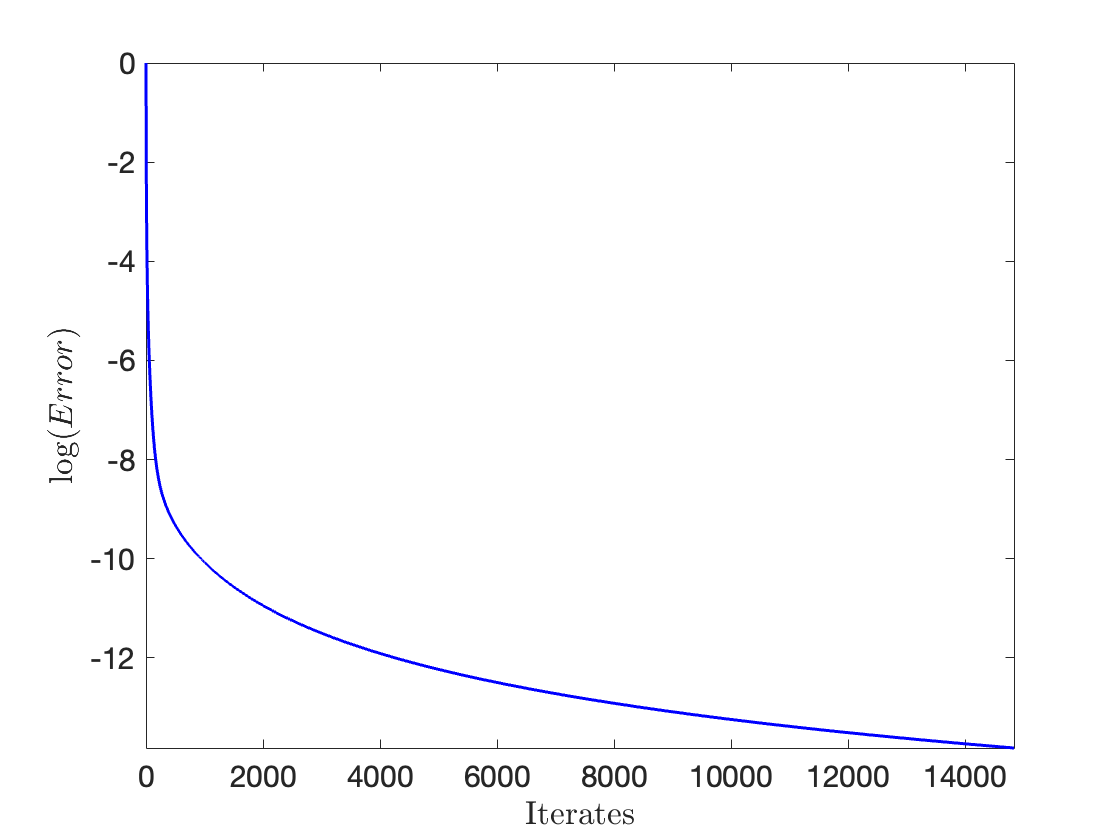}
	\vskip-.5cm
	\caption{Iterates vs Logarithm of the relative error.}
	\label{graph:log:error}
\end{figure}

   We thus see that the evolution of the relative error is nonlinear with respect to the number of iterates, a usual phenomenon in ill-posed parabolic problems. To be more precise, the slope of the curve reduces significantly after the first 200 iterates.
   

\subsection{Test \#4: An experiment for a $3$D heat equation}

   Now, we consider the~$3$D domain~$\Omega=[0,1]^3$ and the control region~$\omega=[0.2,0.4]^3$, we fix~$T=0.5$ and we deal with the (controlled) heat equation~\eqref{eq:main:01} with~$a=0.01$, $b=-1.0$ and the initial condition
\begin{align*}
	y^0(x_1,x_2,x_3)=100\cdot\sin(\pi x_1) \, \sin(\pi x_2) \, \sin(\pi x_3) \quad \forall (x_1,x_2,x_3)\in \Omega.
\end{align*}
   We also consider a 3D version of the function~$\eta^0$ given by~\eqref{numerical:eta:0} and set the functions~$\rho$ and~$\rho_0$ accordingly.
   The goal is to solve the extremal problem~\eqref{Extremal:problem:01}.

   We consider a 3D mesh whose numbers of tetrahedrons and vertices are, respectively, $11023$ and~$2042$; see Figure~\ref{Fig:3Dmesh}.

\begin{figure}[!h]
	\centering 
	\includegraphics[scale=0.5]{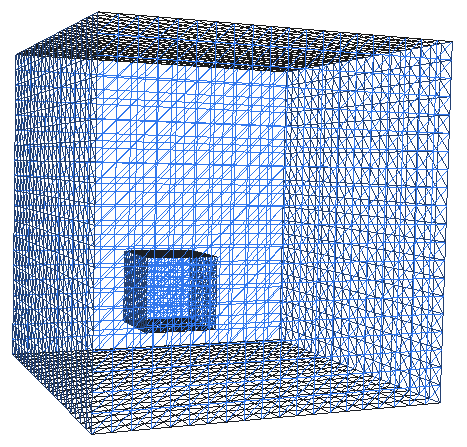}
	\vskip-.5cm
	\caption{3D mesh with~$\#t=11023$ and~$\#v=2042$.}
	\label{Fig:3Dmesh}
\end{figure}  
%

   We use ALG~3 to solve the null controllability problem.
   The stopping criteria is 
\begin{align*}
	\dfrac{\|q^{k+1}-q^{k}\|_{L^2(Q)}}{\|q^k\|_{L^2(Q)}}\leq 10^{-5}. 
\end{align*}


   The computed controlled solution can be found in~Figure~\ref{3D:evolution:computed:state} at several times.
   On the other hand, the projected controlled state and the projected associated control at~$x_1=x_2=0.3$ are given in~Figures~\ref{Fig:3D-proj-state-y} and~\ref{Fig:3D-proj-control-v}.

\begin{figure}[!h]
	\begin{minipage}[b]{0.30\textwidth}
		\centering 
    	\includegraphics[scale=0.15]{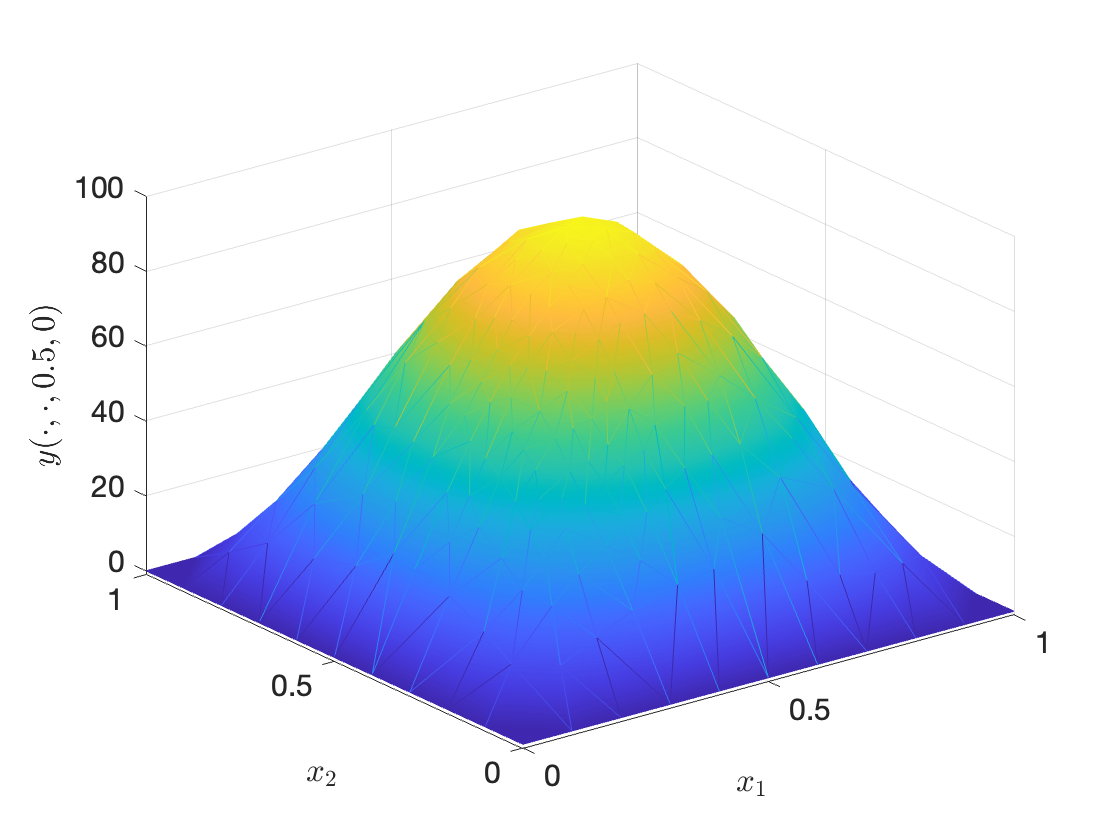}
    \end{minipage}
    \hfill
    \begin{minipage}[b]{0.30\textwidth}
    	\centering 
    	\includegraphics[scale=0.15]{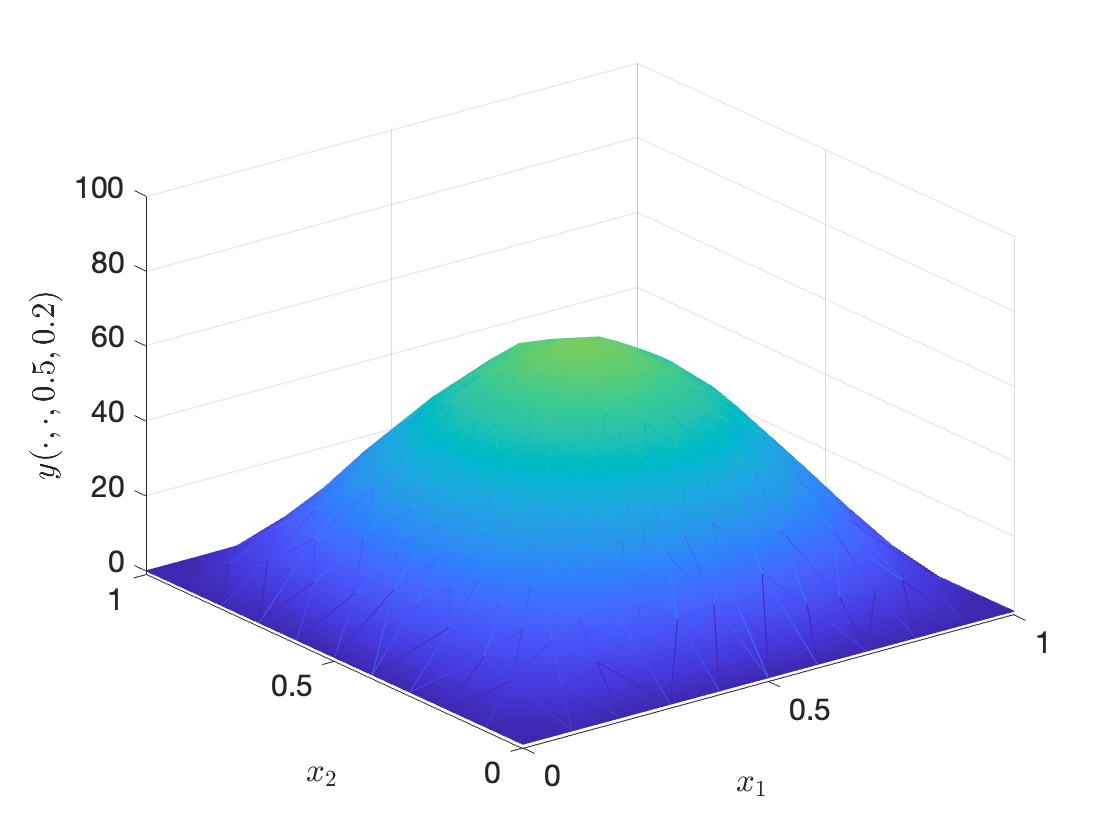}
    \end{minipage}
    \hfill
    \vspace{1em}
    \begin{minipage}[b]{0.30\textwidth}
    	\centering 
    	\includegraphics[scale=0.15]{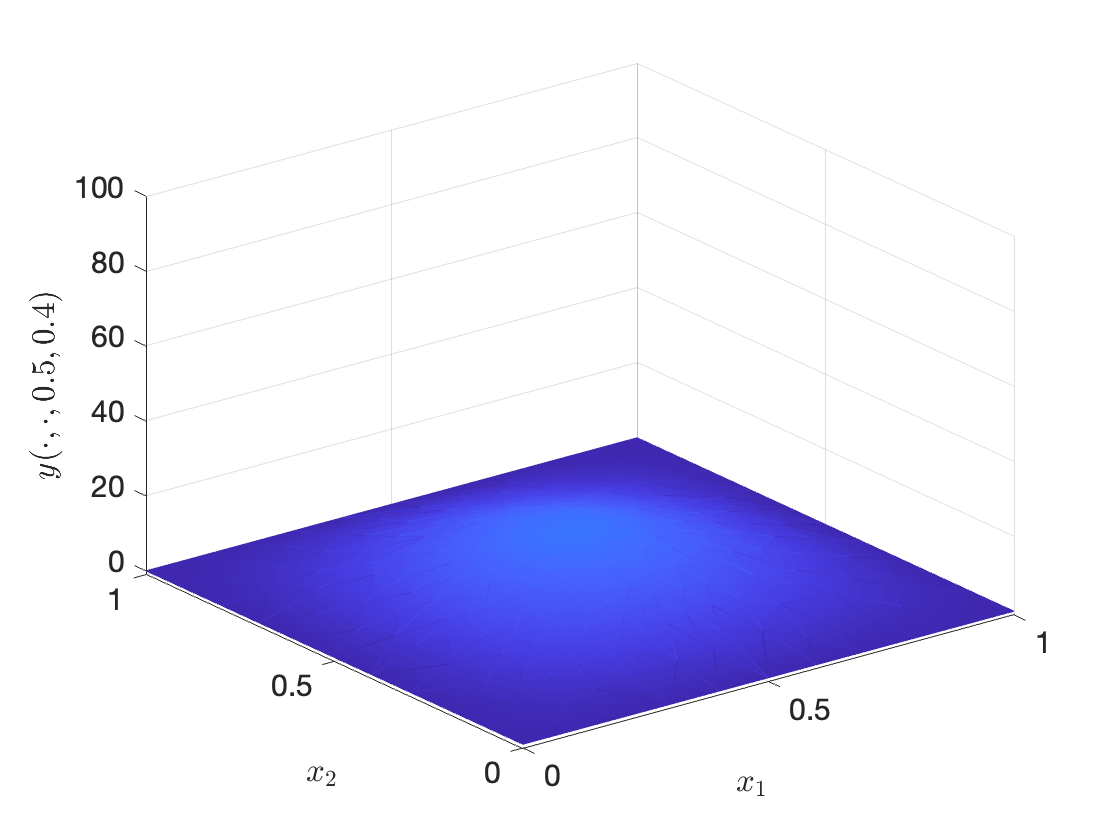}
    \end{minipage}
 	\vskip-.5cm
	\caption{Evolution of the computed state at~$t=0$ (left), $0.2$ (center) and~$0.4$ (right).}
	\label{3D:evolution:computed:state}    
\end{figure}

\begin{figure}[!h]
	\begin{minipage}[b]{0.50\textwidth}
		\centering 
    	\includegraphics[scale=0.40]{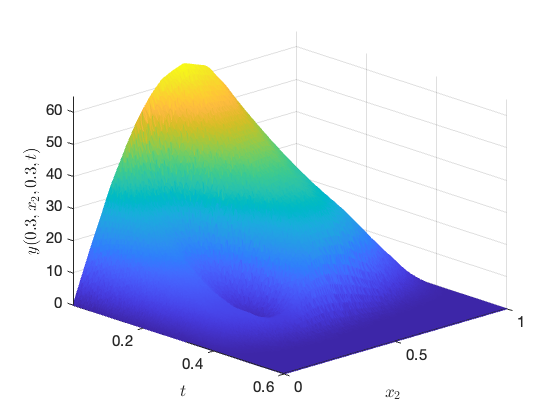}
	\vskip-.5cm
	\caption{The projected state ($x_1\!=\!x_3\!=\!0.3$).}
	\label{Fig:3D-proj-state-y}
    \end{minipage}
    \hfill
    \begin{minipage}[b]{0.5\textwidth}
    	\centering 
    	\includegraphics[scale=0.40]{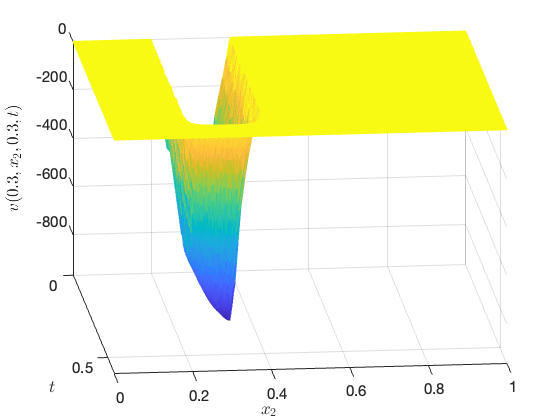}
	\vskip-.5cm
	\caption{The projected control ($x_1\!=\!x_3\!=\!0.3$).}
	\label{Fig:3D-proj-control-v}
    \end{minipage}
\end{figure}

%

   In addition, we have 
\begin{align*}
	\|y(\cdot\,,T)\|_{L^2}=2.09824\cdot 10^{-7}\text{ and }\|q(\cdot\,,T)\|_{L^2}=45.8320.
\end{align*}
   The evolution in time of~$\|\bar{y}(\cdot\,,t)\|_{L^2}$, $\|y(\cdot\,,t)\|_{L^2}$ and~$\|v(\cdot\,,t)\|_{L^2(\omega)}$ are depicted in Figures~\ref{Fig:3D:norm:y:} and~\ref{Fig:3D:norm:v}.
   In addition, the evolution in time of~$\|q(\cdot\,,t)\|_{L^2}$ is presented in~Figure~\ref{Fig:3D:norm:q}.

\begin{figure}[!h]
	\begin{minipage}[b]{0.50\textwidth}
		\centering 
    	\includegraphics[scale=0.40]{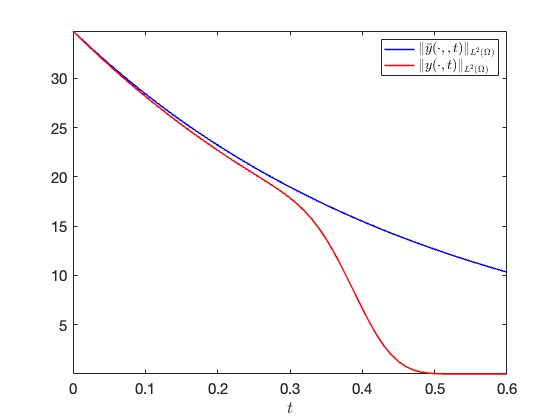}
	    \vskip-.5cm
    	\caption{$\|\bar{y}(\cdot\,,t)\|_{L^2}$ and~$\|y(\cdot\,,t)\|_{L^2}$.}
    	\label{Fig:3D:norm:y:}
    \end{minipage}
    \hfill
    \begin{minipage}[b]{0.5\textwidth}
    	\centering 
    	\includegraphics[scale=0.40]{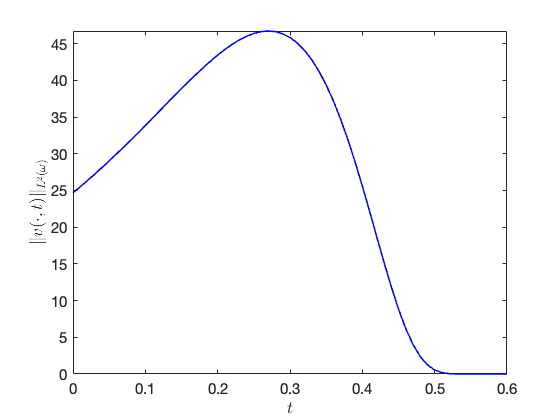}
	    \vskip-.5cm
    	\caption{Evolution of~$\|v\|_{L^2(Q_\omega)}$.}
    	\label{Fig:3D:norm:v}
    \end{minipage}
\end{figure}

\begin{figure}[!h]
	\centering 
	\includegraphics[scale=0.4]{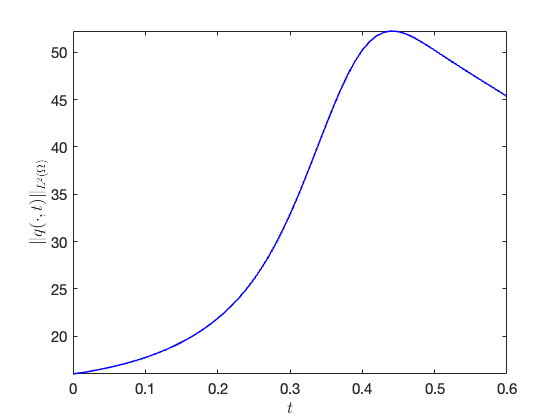}
	\vskip-.5cm
	\caption{Evolution of~$\|q\|_{L^2(Q)}$.}
	\label{Fig:3D:norm:q}
\end{figure}

\subsection{Test \#5: An experiment for the $2$D Stokes system}

   In this experiment, we take~$\Omega=(0,1)^2\subset \mathbb{R}^2$, $\omega=(0.2,0.4)^2$ and~$T=0.6$.
   We consider the problem~\eqref{Stokes:system:01} with~$a=0.05$,
\begin{align*}
	\bm{x}=\left(
	\begin{array}{c}
		x_1\\x_2
	\end{array}
	\right),\quad 
	\bm{y}=\left(
	\begin{array}{c}
		y_1\\y_2 
	\end{array}
	\right),\quad \bm{v}=\left(
	\begin{array}{c}
		v_1\\v_2
	\end{array}
	\right),
\end{align*}
with the initial condition
\begin{align*}
	\bm{y}^0(\bm{x}) = 10^3\left( 
	\begin{array}{c}
		x_1^2(1-x_1)^2x_2(1-x_2)^2\\
		-x_1(1-x_1)(0.5-2x_1)x_2^2(1-x_2)^2.
	\end{array}
	\right).
\end{align*}

   We consider a mesh with~$1698$ triangles and~$878$ vertices (see~Figure~\ref{Fig-Stokes-mesh}).

\begin{figure}[!h]
	\centering 
	\includegraphics[scale=0.5]{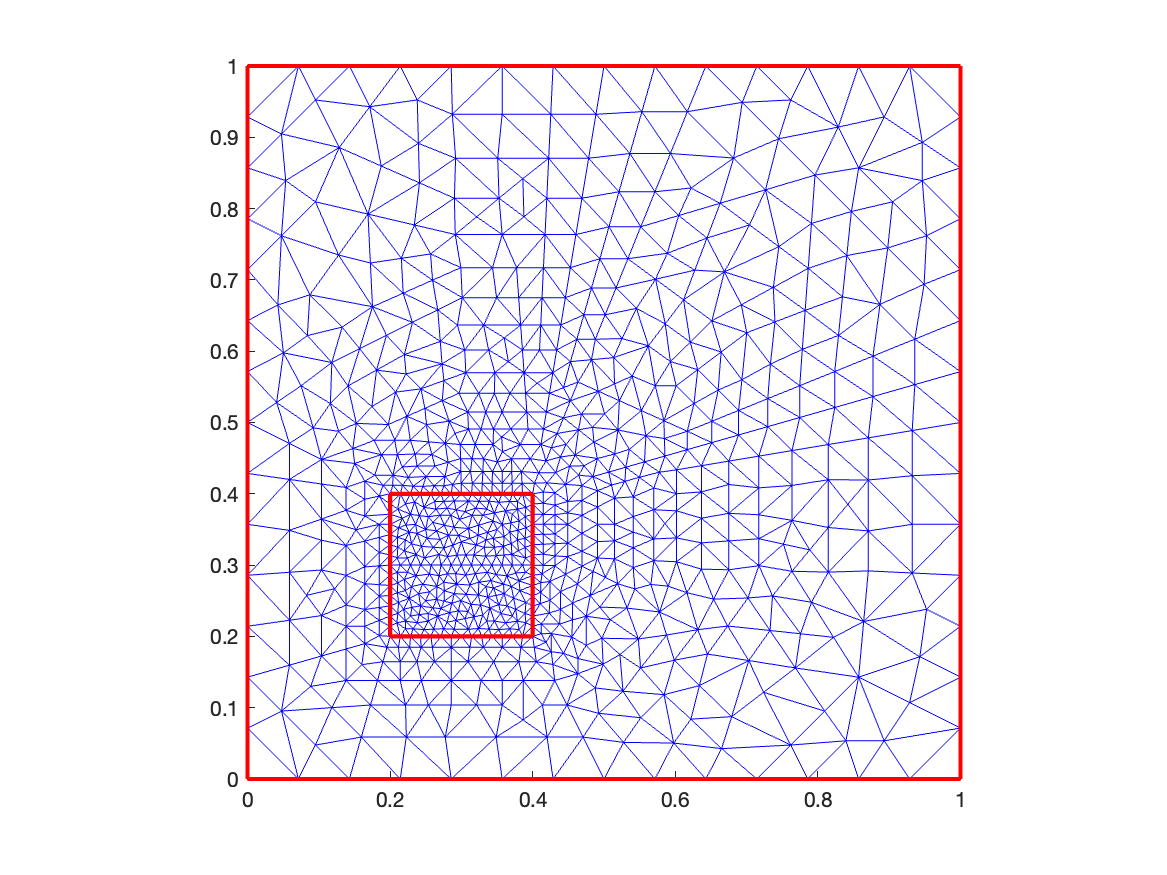}
	\vskip-.5cm
	\caption{Mesh with 1698 triangles and 878 vertices.}
	\label{Fig-Stokes-mesh}
\end{figure} 

   The uncontrolled solution of the Stokes system is depicted in~Figure~\ref{Stokes-uncontrolled-solution}.
   We note that the solution does not vanish at~$t=T$.
   We apply ALG~3 with~$K=0.1$, $R=10^{5}$ and stopping criteria
\begin{align*}
	\frac{\|\bm{q}^{k+1}-\bm{q}^k\|_{\bm{L}^2(Q)}}{\|\bm{q}^k\|_{\bm{L}^2(Q)}}\leq 1\cdot 10^{-5}.
\end{align*}

\begin{figure}[!h]
	\begin{minipage}[b]{0.30\textwidth}
	\centering 
    	\includegraphics[scale=0.22]{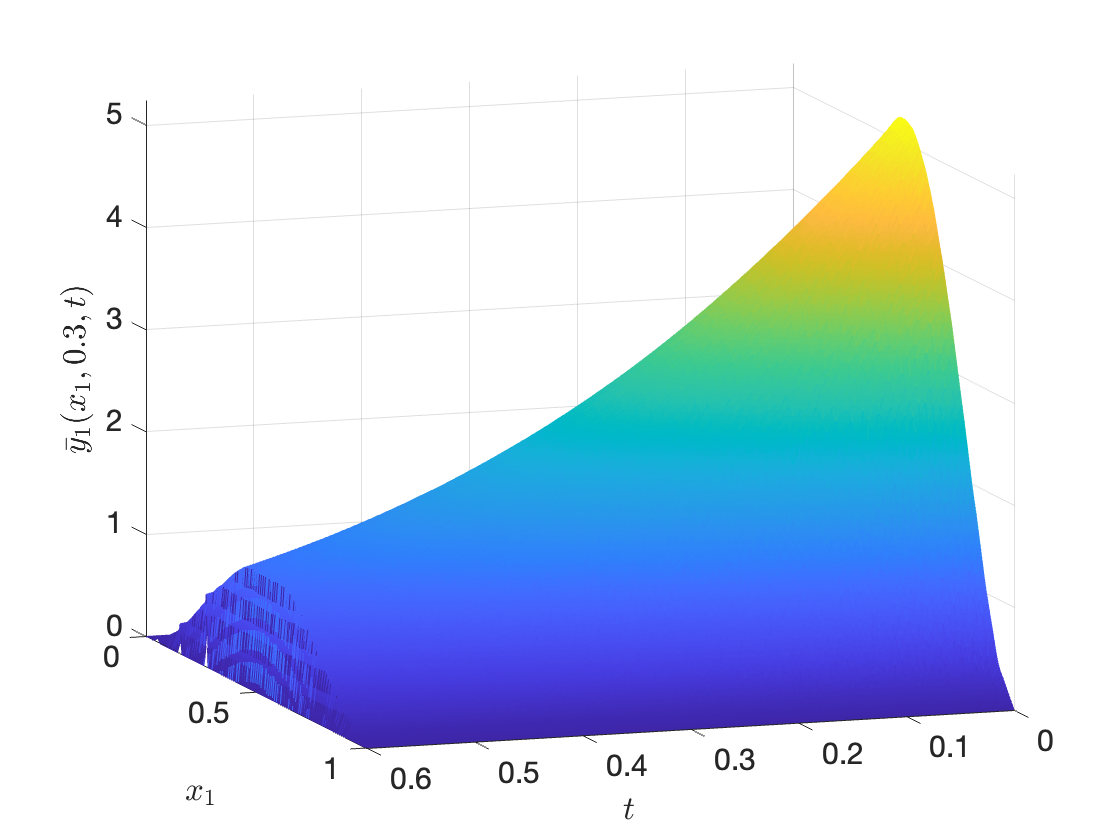}
    \end{minipage}
    \hfill
    \begin{minipage}[b]{0.5\textwidth}
    	\includegraphics[scale=0.22]{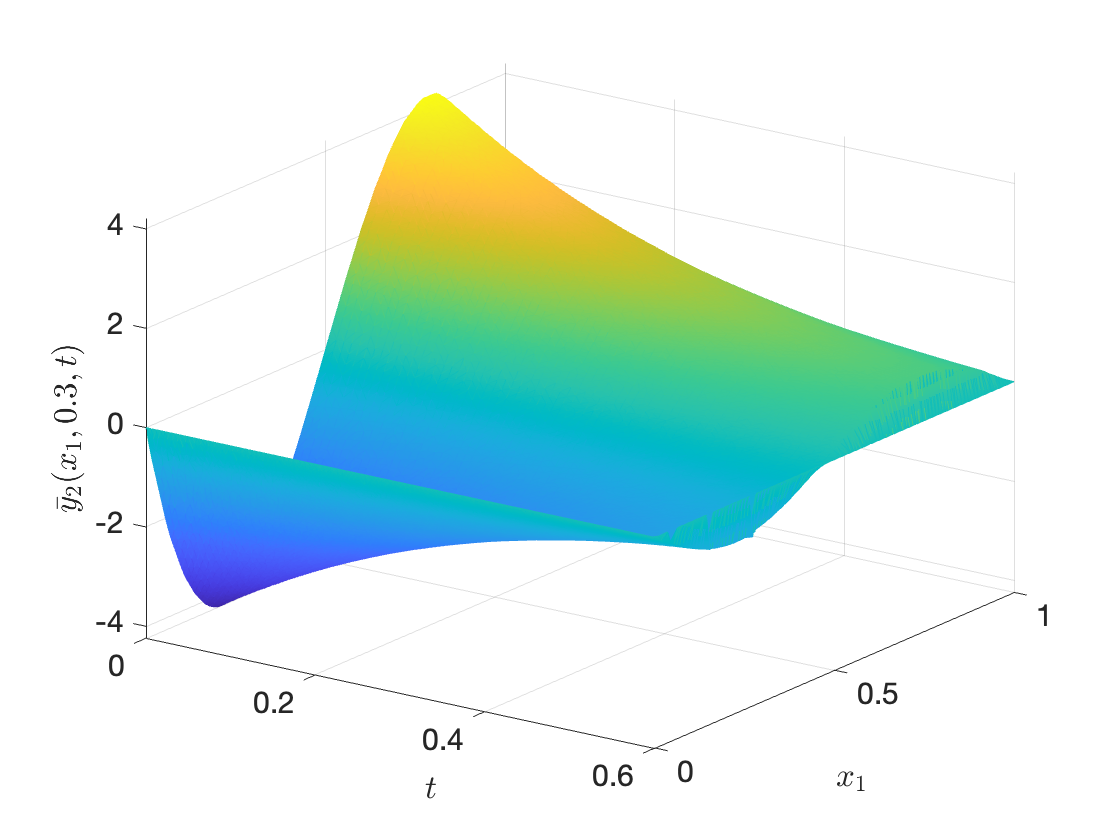}
    \end{minipage}
    \vskip-.5cm
    \caption{The~$x_1$-projected first(left) and second(right) component of the uncontrolled solution at~$x_2=0.3$.}
    \label{Stokes-uncontrolled-solution}
\end{figure}

\begin{figure}[!h]
	\centering 
	\begin{minipage}[b]{0.30\textwidth}
    	\includegraphics[scale=0.15]{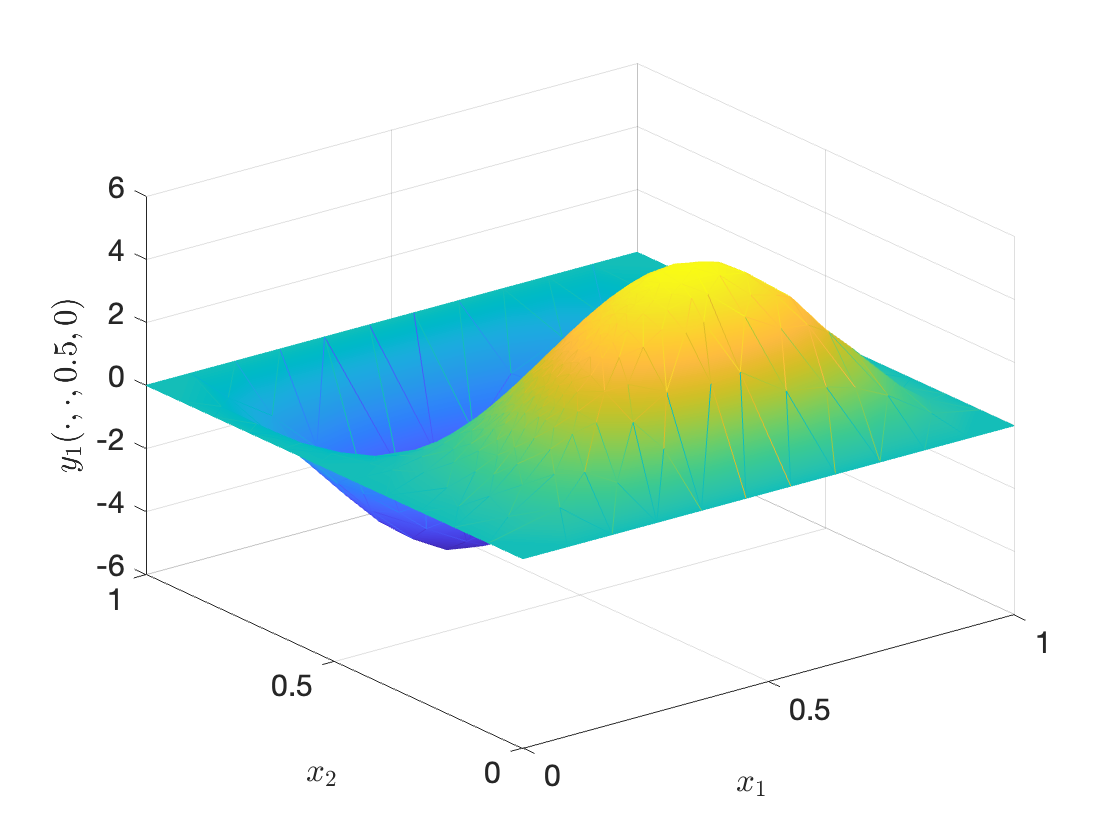}
    \end{minipage}
    \hfill
    \begin{minipage}[b]{0.30\textwidth}
    	\includegraphics[scale=0.15]{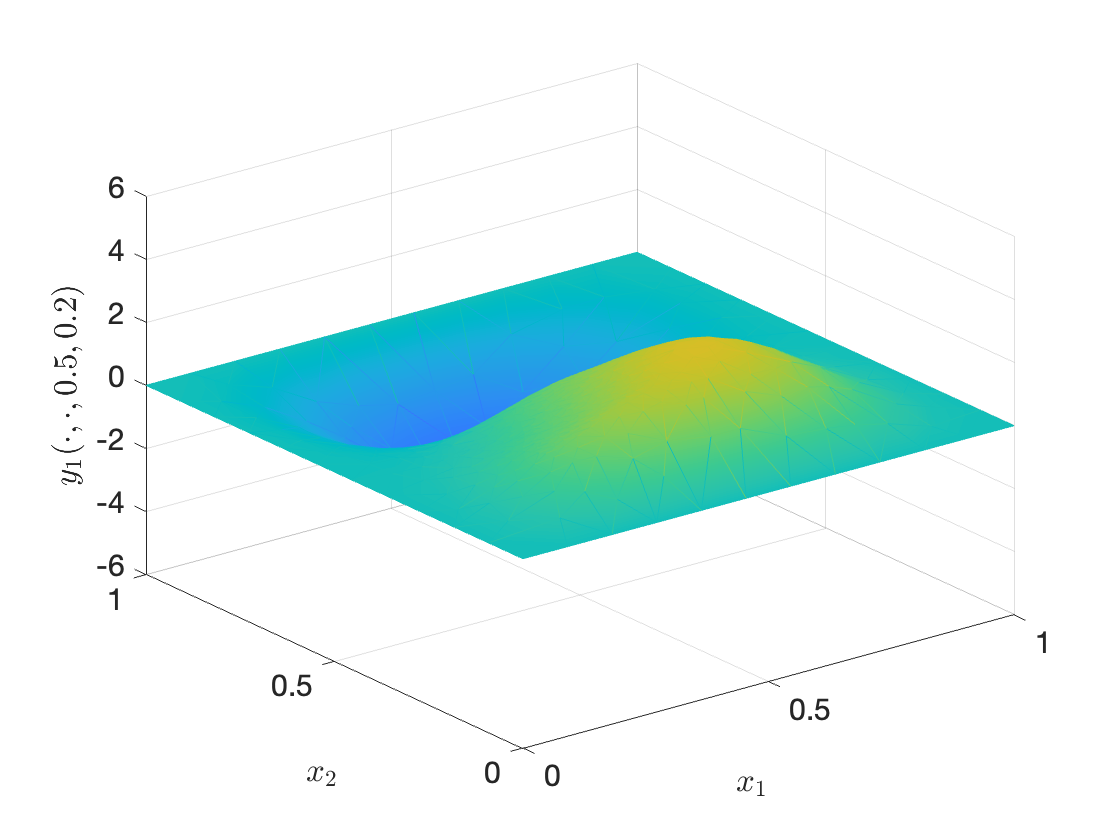}
    \end{minipage}
    \hfill 
    \begin{minipage}[b]{0.30\textwidth}
    	\includegraphics[scale=0.15]{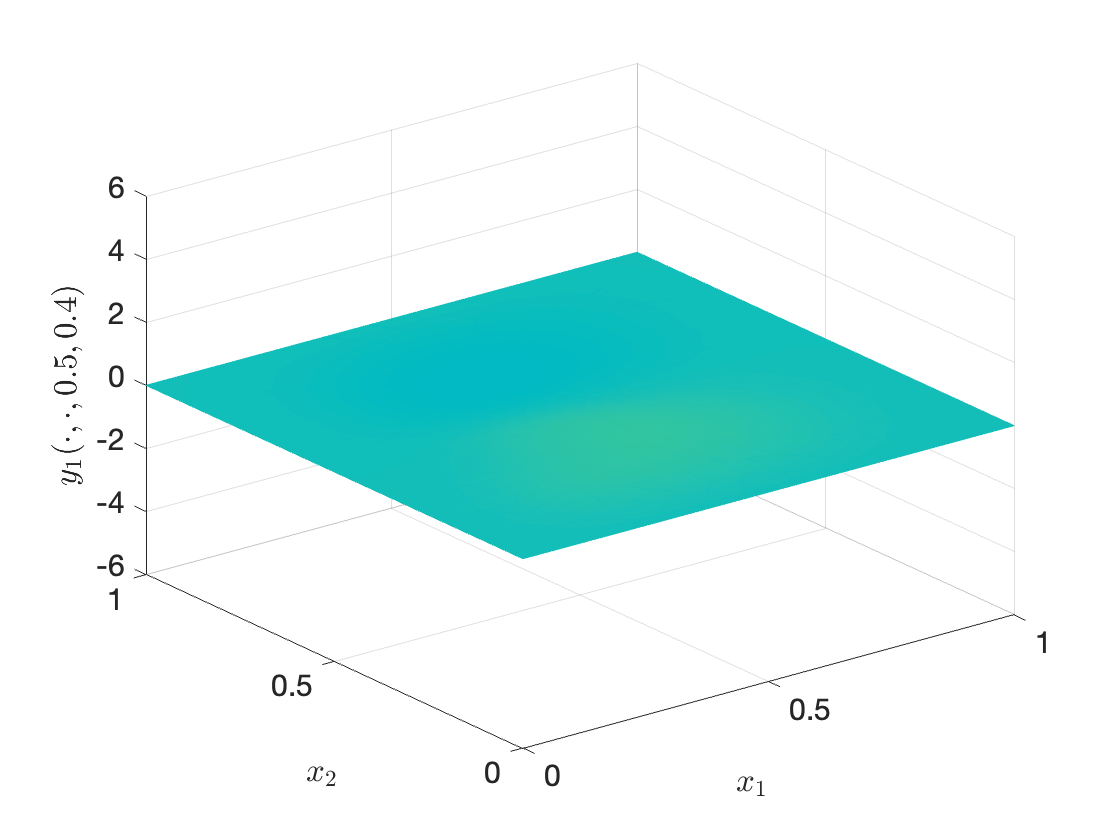}
    \end{minipage}    
    \caption{The projected controlled solution component $y_1$ in at~$t=0$(left), $t=0.2$(center) and $t=0.4$(right).}
    \label{Fig-Stokes-controlled-solution-y1}
\end{figure}

\begin{figure}[!h]
	\begin{minipage}[b]{0.30\textwidth}
    	\includegraphics[scale=0.15]{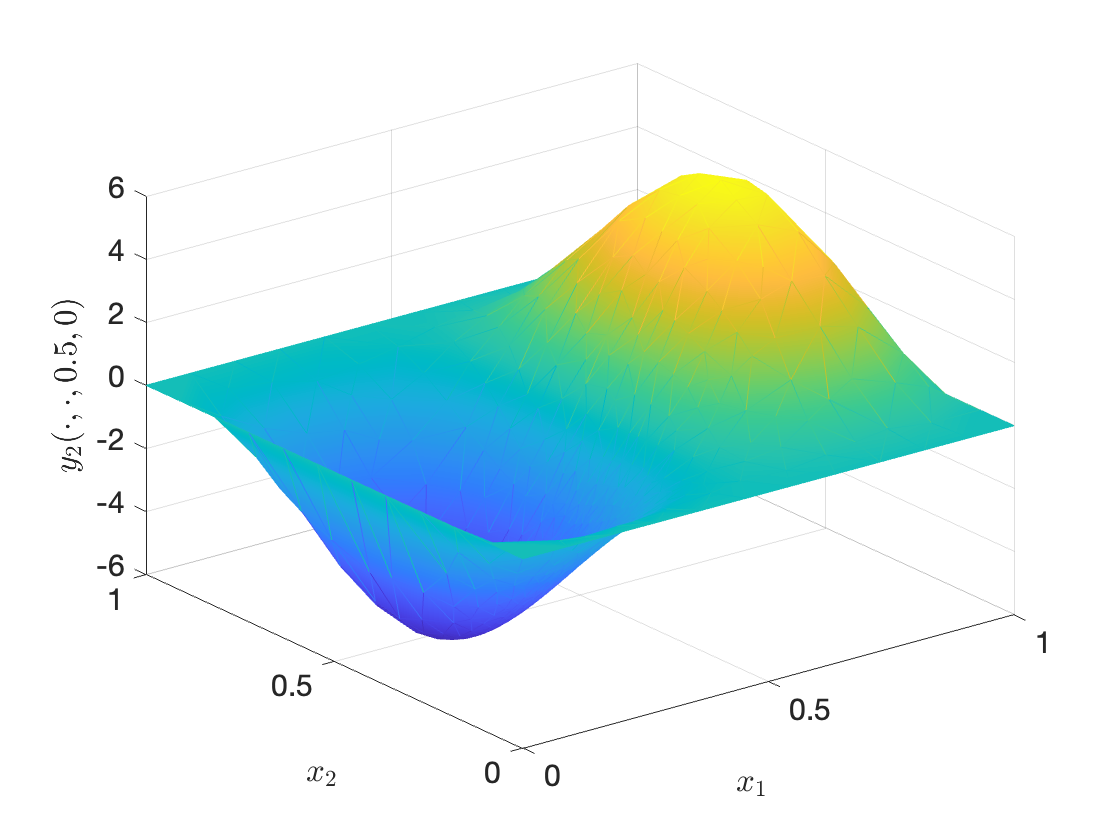}
    \end{minipage}
    \hfill
    \begin{minipage}[b]{0.30\textwidth}
    	\includegraphics[scale=0.15]{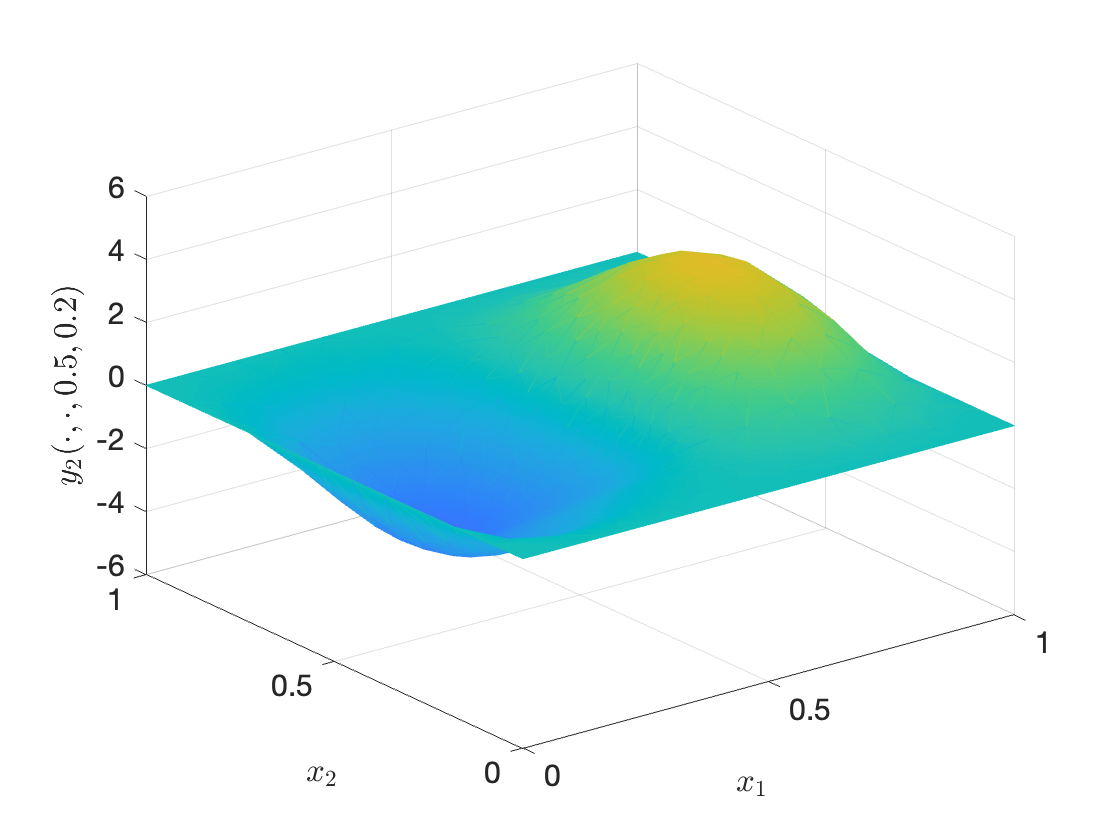}
    \end{minipage}
    \hfill 
    \begin{minipage}[b]{0.30\textwidth}
    	\includegraphics[scale=0.15]{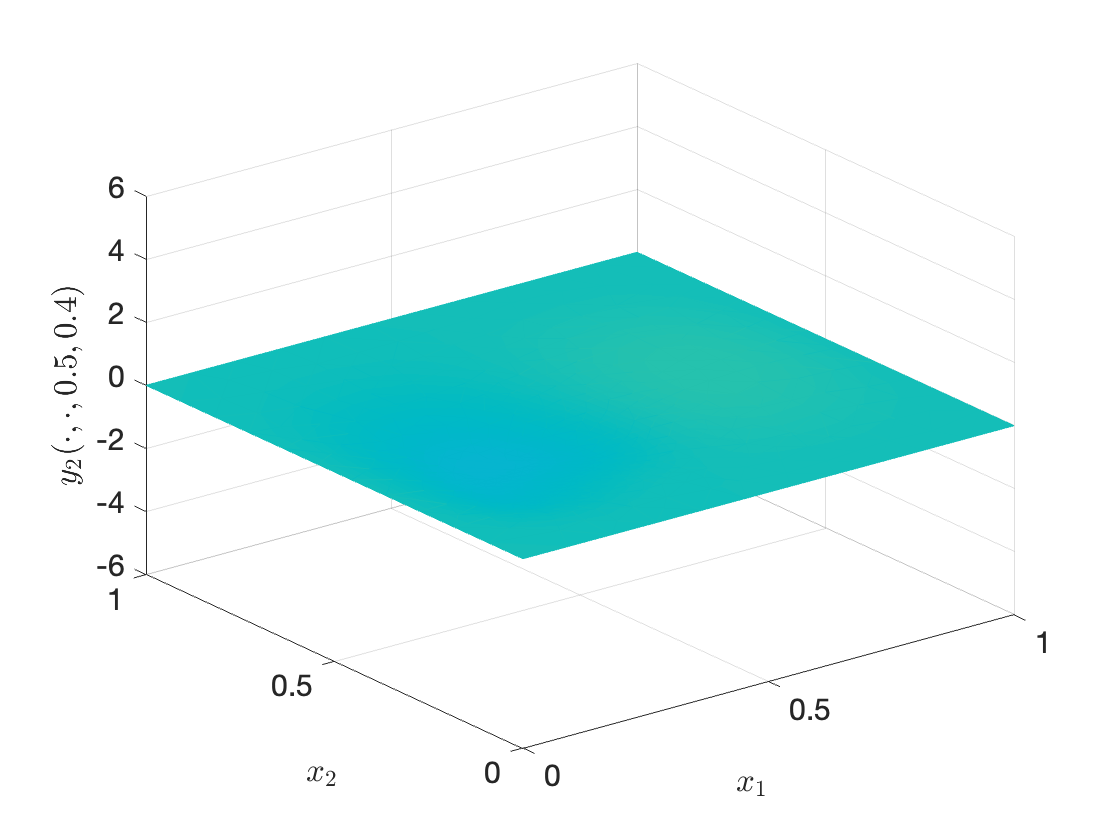}
    \end{minipage}    
    \caption{The projected controlled solution component $y_2$ in at~$t=0$(left), $t=0.2$(center) and $t=0.4$(right).}
    \label{Fig-Stokes-controlled-solution-y2}
\end{figure}
%
%
%


   The components of the projected state are given in~Figures~\ref{Fig-Stokes-controlled-solution-y1} and~\ref{Fig-Stokes-controlled-solution-y2}.
   We observe there that the solution vanishes at~$t=T$.
   In fact, the norms of the controlled state and~$\bm{q}$ at~$t=T$ are given by
  \begin{align*}
	\|\bm{y}(\cdot\,,T)\|_{L^2}=5.50185\times 10^{-9}
	\text{ and }\|\bm{q}(\cdot\,,T)\|_{L^2}=55.0852.
  \end{align*}

   The components of the projected control are depicted in~Figures~\ref{Fig:Stokes:evol:ybarvsy} and~\ref{Fig:Stokes:evol:control}.
   
    The evolution in time of the norms of~$\bar{\bm{y}}$, $\bm{y}$ and~$\bm{v}$ is given in~Figures~\ref{Fig:Stokes-ybary} and~\ref{Fig:Stokes-vnorm}.
    In addition, the evolution of~$\|\bm{q}(\cdot\,,t)\|_{L^2}$ is shown in~Figure~\ref{Fig:Stokes:qnorm}.

   Once more, we observe in~Figures~\ref{Fig:Stokes-ybary} and~\ref{Fig:Stokes-vnorm} that the norms of the state and the control go to zero as~$t\to T^-$.

\begin{figure}[!h]
	\begin{minipage}[b]{0.50\textwidth}
		\centering 
    	\includegraphics[scale=0.40]{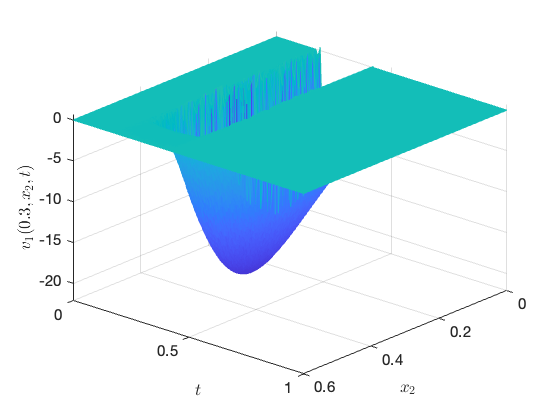}
	    \vskip-.5cm
    	\caption{The projected~$v_1$.}
    	\label{Fig:Stokes:evol:ybarvsy}
    \end{minipage}
    \hfill
    \begin{minipage}[b]{0.5\textwidth}
    	\centering 
    	\includegraphics[scale=0.40]{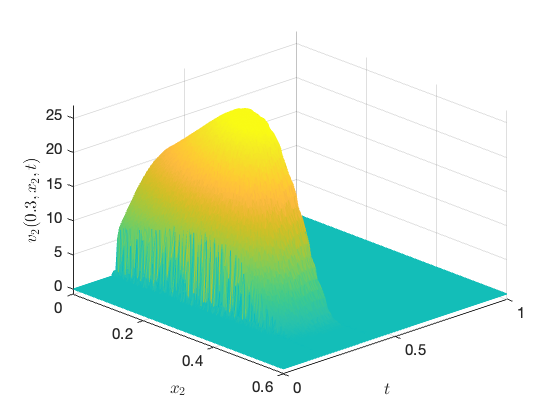}
	    \vskip-.5cm
    	\caption{The projected~$v_2$.}
    	\label{Fig:Stokes:evol:control}
    \end{minipage}    
\end{figure}

\begin{figure}[!h]
	\begin{minipage}[b]{0.50\textwidth}
		\centering 
    	\includegraphics[scale=0.44]{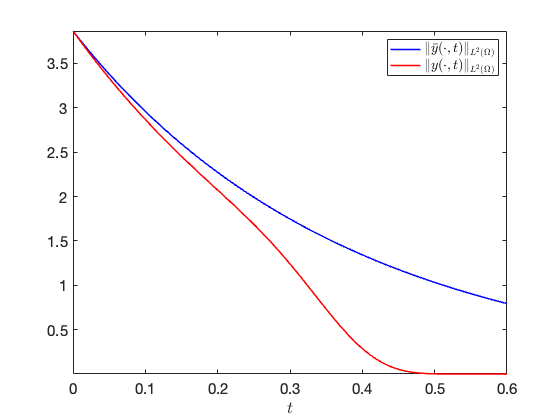}
	    \vskip-.5cm
	    \caption{The $L^2$-norms of $\bm{y}(\cdot,t)$ and $\bm{\overline{y}}(\cdot,t)$.}
    	\label{Fig:Stokes-ybary}
    \end{minipage}
    \hfill
    \begin{minipage}[b]{0.5\textwidth}
    	\centering 
    	\includegraphics[scale=0.22]{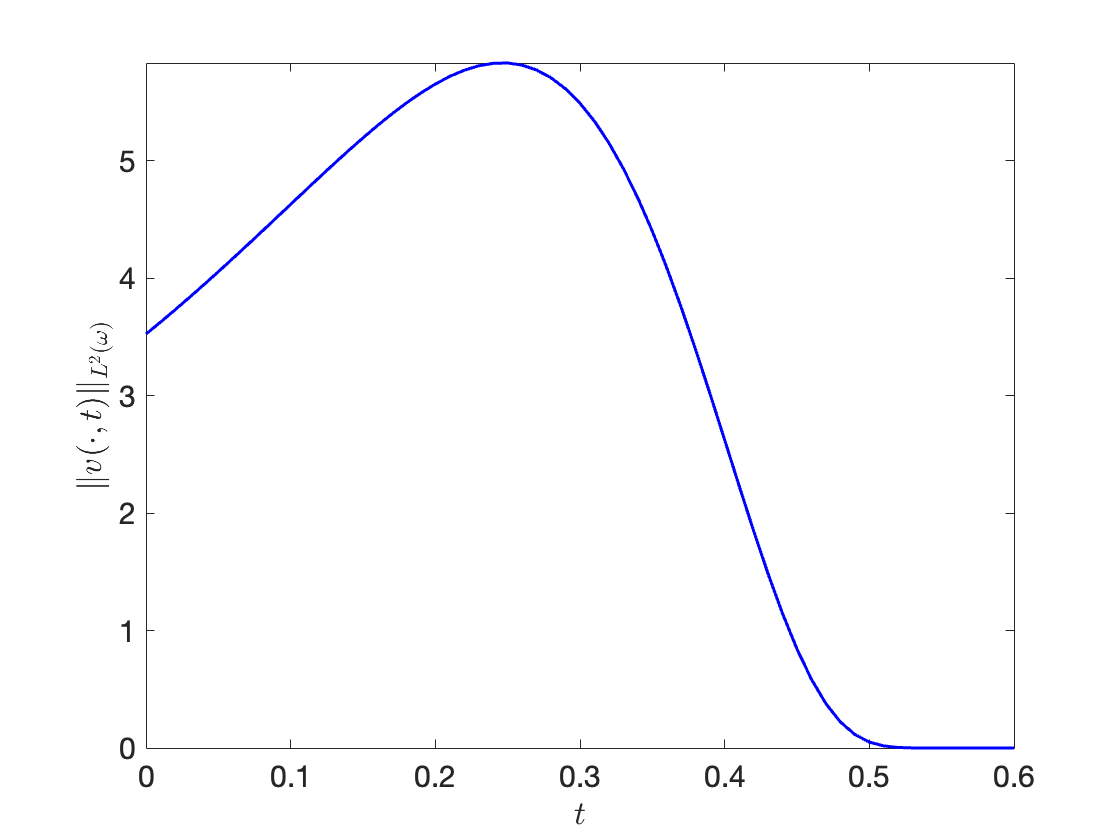}
    	\vskip-.5cm
	    \caption{The $L^2$-norm of $\bm{v}(\cdot,t)$}
    	\label{Fig:Stokes-vnorm}
    \end{minipage}    
\end{figure}

\begin{figure}[!h]
	\centering 
	\includegraphics[scale=0.22]{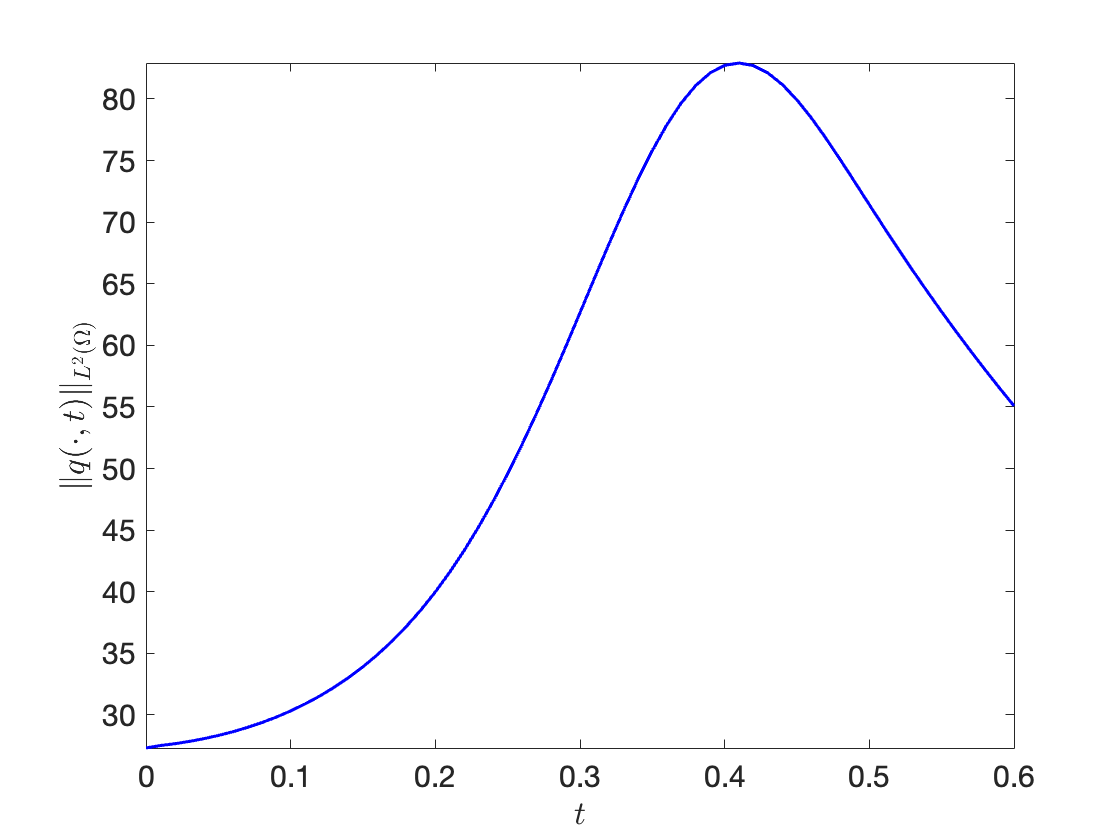}
	\vskip-.5cm
    \caption{$\|\bm{q}(\cdot\,,t)\|_{\bm{L}^2(\omega)}$.}
    \label{Fig:Stokes:qnorm}
\end{figure}

\section{Summary and further comments}
\label{Section:further:comments}

   In this paper, we have presented several algorithms based on Lagrangian and Augmented Lagrangian formulations of null controllability problems for the heat and the Stokes PDEs.
   To apply the techniques, we introduced a large parameter~$R>0$ in order to truncate the weight function associated with the state variable and then a second paremater~$K>0$ that plays the role of penalization in the Augmented Lagrangian.
   As~$R$ goes to~$+\infty$, we recover the solution of the original problem.
   This is proved rigorously and has been numerically validated in several experiments.
   
   One of the main virtues of the presented methods is that it can be applied with reasonable effort to control problems for high spatial dimension PDEs, which is not so clear for other strategies.
   
   The arguments and results can be extended and adapted to many other controllability problems. 
   Thus, they are valid for the internal or boundary control of parabolic PDEs and systems complemented with other boundary conditions, control problems in other domains, etc.
   
   In a next future, we will investigate their utility in the context of some semilinear and nonlinear problems like state-dependent diffusion heat equations, Burgers, Navier-Stokes, and Boussinesq systems, etc.
   
   The formulation and resolution of null or exact controllability problems with Lagrangian and Augmented Lagrangian methods for wave and Schr\"odinger PDEs remain, as far as we know, unexplored.
   This will also be investigated in forthcoming work.

\section*{Acknowledgments}
	The first and third authors were partially supported by Grant PID2020–114976GB–I00, funded by MCIN/AEI/10.13039/501100011033. The second author has been funded under the Grant QUALIFICA by Junta de Andaluc\'ia grant number QUAL21 005 USE.

\bibliography{biblio02}
\bibliographystyle{plain}

\end{document}